\newtheorem{thm}{Theorem}[section]
\newtheorem*{thm*}{Theorem}
\newtheorem{introthm}{Theorem} 
\newtheorem{prop}[thm]{Proposition}
\newtheorem{cor}[thm]{Corollary}
\newtheorem{lemma}[thm]{Lemma}
\theoremstyle{definition}
\newtheorem{defn}[thm]{Definition}
\newtheorem*{acknowledgements*}{Acknowledgements}
\newtheorem{remark}[thm]{Remark}
\newcommand{\F}{\mathbb{F}}
\newcommand{\Q}{\mathbb{Q}}
\newcommand{\Z}{\mathbb{Z}}
\newcommand{\R}{\mathbb{R}}
\newcommand{\proj}{\mathbb{P}}
\newcommand{\aff}{\mathbb{A}}
\newcommand{\set}[1]{\left\{#1\right\}}
\newcommand{\eps}{\varepsilon}
\newcommand{\inject}{\hookrightarrow}
\DeclareMathOperator{\Sing}{Sing} 
\DeclareMathOperator{\chr}{char} 
\DeclareMathOperator{\bigO}{O} 
\DeclareMathOperator{\codim}{codim} 
\newcommand{\Frob}{\mathrm{Frob}} 
\newcommand{\id}{\mathrm{id}} 
\DeclareMathOperator{\Tr}{Tr} 
\DeclareMathOperator{\Gal}{Gal} 
\DeclareMathOperator{\Var}{Var} 
\DeclareMathOperator*{\Covar}{Covar} 
\DeclareMathOperator{\Gr}{Gr}
\numberwithin{equation}{section}
\title{Higher moments of arithmetic functions in short intervals: a geometric perspective}
\author{Daniel Hast, Vlad Matei}
\begin{document}
\maketitle

\begin{abstract}
  We study the geometry associated to the distribution of certain arithmetic functions, including the von Mangoldt function and the M\"obius function, in short intervals of polynomials over a finite field $\F_q$. Using the Grothendieck--Lefschetz trace formula, we reinterpret each moment of these distributions as a point-counting problem on a highly singular complete intersection variety. We compute part of the $\ell$-adic cohomology of these varieties, corresponding to an asymptotic bound on each moment for fixed degree $n$ in the limit as $q \to \infty$. The results of this paper can be viewed as a geometric explanation for asymptotic results that can be proved using analytic number theory over function fields.
\end{abstract}

Fix integers $m \geq 1$, $n \geq 4$, and $1 \leq h \leq n - 3$. Let $q = p^r$ be a power of a prime $p$. Denote the $q$-power (geometric) Frobenius endomorphism by $\Frob_q$. Let $M_n(\F_q) \subset \F_q[x]$ be the set of monic polynomials of degree $n$. Define an equivalence relation $\sim$ (or $\sim_h$ if we wish to make the dependence on $h$ explicit) by
\[
f \sim g \iff \deg(f - g) \leq h.
\]
The equivalence classes of $\sim_h$ are the function field analogue of intervals of width $q^{h + 1}$.

For a suitable arithmetic function $\varphi$ on $\F_q[x]$, we will show that certain asymptotic bounds on
\[
\frac{1}{q^n} \sum_{f \in M_n(\F_q)} \Bigl( \sum_{\substack{g \in \F_q[x] \\ \deg g \leq h}} \varphi(f + g) \Bigr)^m = q^{h + 1} \cdot \frac{1}{q^n} \sum_{\substack{f_1, \dots, f_m \in M_n(\F_q) \\ f_i \sim f_j}} \varphi(f_1) \cdots \varphi(f_m)
\]
in the limit as $q \to \infty$ (ranging over powers of primes $p > n$) can be interpreted in terms of vanishing of cohomology of a certain algebraic variety. These sums are the moments of the distribution of $\varphi$ among intervals of size $q^{h + 1}$ in degree $n$.

We begin with a brief summary in \S\ref{section:intro} of the integer analogues of the questions we study. We also discuss the function field analogue of the prime number theorem, as well as recent results of a similar nature, stating our arithmetic bounds and placing them into context.

In \S\ref{section:geometry}, we define an algebraic variety $X_{m, n, h}$ that parametrizes the roots of $m$-tuples of polynomials $(f_1, \dots, f_m)$ such that $f_i \sim f_j$ for all $i, j$. We compute that $X_{m, n, h}$ is an irreducible projective complete intersection of dimension $n + (m - 1)(h + 1) - 1$ with singular locus of codimension $2h + 3$.

In \S\ref{section:cohomology}, we study the $\ell$-adic cohomology of $X_{m, n, h}$. The variety $X_{m, n, h}$ carries a Frobenius action and an action of $S_n^m$ given by permuting the roots. A general theorem on the cohomology of projective complete intersections \cite[Prop.\ 3.3]{sing-var} shows that $H^r(X_{m, n, h}, \Q_\ell)$ agrees with the cohomology of projective space in high degrees. This shows that the trace of Frobenius on $H^r(X_{m, n, h}, \Q_\ell)$ for such $r$ is $q^{r/2}$ for $r$ even, and $0$ for $r$ odd. We also show that $S_n^m$ acts trivially on $H^r(X_{m, n, h}, \Q_\ell)$ in these cases. These results are summarized as follows:
\begin{introthm}
  \label{thm:intro}
  Let $m \geq 2$, $n \geq 4$, and $1 \leq h \leq n - 3$. Let $k$ be an algebraically closed field of characteristic $p$; if $m > 2$, assume $p > n$ or $p = 0$. The closed subvariety $X_{m, n, h} \subset \proj_k^{mn - 1}$, defined in \S\ref{section:def-variety}, is an irreducible complete intersection of dimension $n + (m - 1)(h + 1) - 1$. The singular locus of $X_{m, n, h}$ has codimension $2h + 3$.

  Moreover, for each $r$ such that $2 \dim X_{m, n, h} - 2h - 1 \leq r \leq 2 \dim X_{m, n, h}$ and for any prime $\ell \neq p$,
  \[
  H^r(X_{m, n, h}, \Q_\ell) = \begin{cases}
    \Q_\ell(-r/2) & \text{if $r$ is even}, \\
    0 & \text{if $r$ is odd}.
  \end{cases}
  \]
  Finally, letting $S_n^m = S_n \times \dots \times S_n$ act on $X_{m, n, h}$ by the $i$-th copy of $S_n$ permuting the roots of the $i$-th polynomial $f_i$, the induced action of $S_n^m$ on $H^r(X_{m, n, h}, \Q_\ell)$ is trivial.
\end{introthm}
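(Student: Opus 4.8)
My plan is to split Theorem~\ref{thm:intro} into its geometric part (complete intersection, dimension, codimension of the singular locus) and its cohomological part (the shape of $H^r$ in the top range, and triviality of the $S_n^m$-action), to prove the former from an explicit model via the Jacobian criterion, and then to deduce the latter from it together with the structural result \cite[Prop.\ 3.3]{sing-var}.

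\textbf{The variety and its singularities.} I would realize $X_{m,n,h} \subset \proj^{mn-1}$ concretely: give $\proj^{mn-1}$ homogeneous coordinates $\alpha_{ij}$ ($1 \le i \le m$, $1 \le j \le n$), viewed as the roots of $f_i = \prod_{j}(x - \alpha_{ij})$, so that $f_i \sim f_j$ becomes the vanishing of $e_k(\alpha_{i,\bullet}) - e_k(\alpha_{1,\bullet})$ for $2 \le i \le m$ and $1 \le k \le n - h - 1$, where $e_k$ is the $k$-th elementary symmetric polynomial. This is a system of $(m-1)(n-h-1)$ homogeneous equations of degrees $1, \dots, n-h-1$. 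To see it cuts out a variety of the expected dimension $d := n + (m-1)(h+1) - 1$ --- hence a complete intersection, since $\proj^{mn-1}$ is Cohen--Macaulay and the equation count equals the codimension $mn-1-d$ --- I would project the affine cone $\widehat{X} \subset \aff^{mn}$ onto its first block $\aff^n$; the fibre over $\alpha_{1,\bullet}$ is a product of $m-1$ fibres of the map ``roots $\mapsto (e_1, \dots, e_{n-h-1})$'', each of dimension exactly $h+1$ because that map factors the finite surjection $\aff^n \to \aff^n$ onto all elementary symmetric functions. Irreducibility I would obtain by checking the generic such fibre is irreducible, i.e.\ that the family of polynomials with fixed top $n-h-1$ coefficients has monodromy group $S_n$. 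The singular locus is then computed by the Jacobian criterion: $\Sing X$ is the locus where the differentials $d\bigl(e_k(\alpha_{i,\bullet}) - e_k(\alpha_{1,\bullet})\bigr)$ become linearly dependent, and, using $\partial e_k/\partial \alpha_{ij} = e_{k-1}(\alpha_{i,\bullet} \setminus \alpha_{ij})$, this reduces to a rank computation for confluent Vandermonde-type matrices whose failure happens precisely on the closed locus where the roots collide in a prescribed pattern, of codimension $2h+3$. \emph{This is the step I expect to be the main obstacle}; it is also where the hypothesis $p > n$ for $m > 2$ enters, to keep those determinants from degenerating.

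\textbf{Cohomology in the top range.} Granting that $X := X_{m,n,h} \subset \proj^N$, $N = mn-1$, is a complete intersection of dimension $d$ with $\codim_X \Sing X = 2h+3$, the cited theorem \cite[Prop.\ 3.3]{sing-var} shows that the restriction map $H^r(\proj^N, \Q_\ell) \to H^r(X, \Q_\ell)$ is an isomorphism in the range $2d - 2h - 1 \le r \le 2d$ (the bound $r \ge 2\dim X - \codim_X\Sing X + 2$). Since in this range $r \le 2d < 2N$, the group $H^r(\proj^N, \Q_\ell)$ is $\Q_\ell(-r/2)$ for $r$ even and $0$ for $r$ odd, which gives the displayed formula.

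\textbf{Triviality of the $S_n^m$-action.} The key point is that the isomorphism just used is induced by the closed immersion $\iota \colon X \hookrightarrow \proj^N$. The group $S_n^m$ acts on $\proj^N$ linearly, by permuting the coordinates $\alpha_{ij}$ within each block, and preserves $X$; it therefore fixes the hyperplane class $\eta$ and hence acts trivially on $H^*(\proj^N, \Q_\ell) = \Q_\ell[\eta]/(\eta^{N+1})$. Because $\iota$ is $S_n^m$-equivariant, $\iota^*$ is an $S_n^m$-equivariant map, and in the top range it is an isomorphism onto $H^r(X, \Q_\ell)$ from a space on which $S_n^m$ acts trivially; hence $S_n^m$ acts trivially on $H^r(X, \Q_\ell)$ there. (Alternatively one can note that the geometric quotient $X/S_n^m$ is the weighted projective space $\operatorname{Proj} k[\, e_k(\alpha_{1,\bullet}) : 1 \le k \le n ;\ e_k(\alpha_{i,\bullet}) : 2 \le i \le m,\ n-h \le k \le n \,]$, whose cohomology is that of $\proj^d$, and that $H^r(X/S_n^m, \Q_\ell) = H^r(X, \Q_\ell)^{S_n^m}$; comparing dimensions with the previous step again forces triviality. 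This route, however, needs $p \nmid n!$ for every $m$, so the equivariant-restriction argument is the one to use.)
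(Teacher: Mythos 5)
The heart of this theorem is the statement that $\Sing X_{m,n,h}$ has codimension exactly $2h+3$, and that is precisely the step you set aside as ``the main obstacle''; your sketch of it is not accurate enough to be completed as written. Two concrete problems. First, the degeneracy condition is not just ``roots collide in a prescribed pattern'' block by block: already for $m=2$, the Jacobian $(A_1\ {-A_2})$ drops rank exactly when the \emph{total} number of distinct values among all $2n$ coordinates $z_{1,j},z_{2,j}$ is at most $n-h-2$, and to see this one must use the defining equations $e_n^j(z_{1,\bullet})=e_n^j(z_{2,\bullet})$ to rewrite the entries in terms of the common $e_n^j$ so that a coincidence $z_{1,j}=z_{2,j'}$ \emph{across} the two blocks also identifies two columns (this is the content of Lemma~\ref{lem:det} and Proposition~\ref{prop:codim-2}). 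Second, your guess about where $p>n$ enters is misplaced: the Vandermonde-type determinant identity is characteristic-free; the characteristic hypothesis is needed (for $m\geq 3$) to show that the locus of points with at most $n-h-2$ distinct coordinates \emph{and} prescribed first $n-h-1$ elementary symmetric functions is finite, which the paper proves via Newton's identities (Lemma~\ref{lem:sing-pts}); that finiteness is what makes the fibration over $T_{n-h-2}$ in Theorem~\ref{thm:codim} give $\codim \geq 2h+3$ for each piece $S_\iota$ of the singular locus. Finally, your plan only addresses where the Jacobian can drop rank (the upper bound on $\dim\Sing$); for codimension \emph{exactly} $2h+3$ you also need the lower bound, i.e.\ an explicit subvariety of that codimension inside the singular locus, such as $\set{z_{1,j}=z_{2,j}\ \forall j,\ z_{1,n-h-2}=\dots=z_{1,n}}$.

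On the cohomological side your route is genuinely different from the paper's, and it has one unproved claim: Theorem~\ref{thm:comp-int-cohom} (\cite[Prop.\ 3.3]{sing-var}), as quoted, only computes $H^r(X,\Q_\ell)$ abstractly in the range $2d-2h-1\leq r\leq 2d$ (where $d=\dim X_{m,n,h}$); it does not assert that the isomorphism is induced by the restriction $\iota^*$ from $\proj^{mn-1}$ — in the high range it is a partial Poincar\'e duality statement, not weak Lefschetz. This can be repaired: since $X_{m,n,h}$ is irreducible and projective, $H^{2d}(X,\Q_\ell)\cong\Q_\ell(-d)$ and $\eta_X^{d}\neq 0$ (its degree is $\deg X$), hence $\eta_X^{r/2}\neq 0$ for even $r\leq 2d$, so $\iota^*$ is injective on the one-dimensional $H^r(\proj^{mn-1},\Q_\ell)$ and, by the dimension count from \cite{sing-var}, an $S_n^m$-equivariant isomorphism, giving triviality. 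The paper instead proves Proposition~\ref{prop:sym-action} by induction on $h$, using the semi-regular-pair Gysin isomorphism \cite[Thm.\ 2.4]{sing-var} for $X_{m,n,h}\inject X_{m,n,h+1}$ (via a Veronese embedding), terminating at $h=n-3$ and comparing with projective space; once you supply the restriction-map identification above, your argument is shorter. The remaining geometric preliminaries (complete intersection, dimension via the fibration over the first block, irreducibility via full $S_n$ monodromy of the fibres, cf.\ \cite[Lemma 3.2]{BBSR}) match the paper's in substance, modulo the small point that with only generic-fibre irreducibility you should invoke equidimensionality of complete intersections (or, as the paper does, prove all fibres irreducible and use flatness and openness of the projection).
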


In \S\ref{section:arithmetic}, we discuss the arithmetic bounds that are implied by our geometric results. We apply the Grothendieck--Lefschetz trace formula to the results of the previous section, yielding an asymptotic formula in the large $q$ limit for the number of points of $X_{m, n, h}$ such that the action of $\Frob_q$ induces a specified permutation of the roots. The moments of several arithmetic functions, most notably the von Mangoldt function $\Lambda$ and the M\"obius function $\mu$, are interpreted in this way. In those two cases, this yields the following bounds:
\begin{introthm}
  \label{thm:intro-arith-top}
  For any integers $m \geq 2$, $n \geq 4$, and $1 \leq h \leq n - 3$, there are constants $C_{m, n, h}$ and $D_{m, n, h}$ such that for every prime $p$ (assuming $p > n$ if $m > 2$) and every positive integer power $q = p^r$,
  \[
  \bigg\lvert \frac{1}{q^n} \sum_{f \in M_n(\F_q)} \Bigl( \sum_{\substack{g \in \F_q[x] \\ \deg g \leq h}} [\Lambda(f + g) - 1] \Bigr)^m \bigg\rvert \leq C_{m, n, h} q^{(h + 1)(m - 1)} \\
  \]
  and
  \[
  \bigg\lvert \frac{1}{q^n} \sum_{f \in M_n(\F_q)} \Bigl( \sum_{\substack{g \in \F_q[x] \\ \deg g \leq h}} \mu(f + g) \Bigr)^m \bigg\rvert \leq D_{m, n, h} q^{(h + 1)(m - 1)}.
  \]
\end{introthm}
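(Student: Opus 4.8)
The plan is to apply the Grothendieck--Lefschetz trace formula to $X_{m,n,h}$ and feed in Theorem~\ref{thm:intro}; the first and substantive step is to rewrite each moment as a weighted sum of \emph{twisted} Lefschetz numbers of $X_{m,n,h}$. For $\varphi = \Lambda - 1$ or $\varphi = \mu$ there is a class function $\psi_\varphi$ on $S_n$ — explicitly $\psi_\mu = (-1)^n\operatorname{sgn}$, and $\psi_{\Lambda - 1}(\sigma) = n-1$ if $\sigma$ is an $n$-cycle and $\psi_{\Lambda-1}(\sigma) = -1$ otherwise — such that $\varphi(f) = \psi_\varphi(\sigma_f)$ for every squarefree $f \in M_n(\F_q)$, where $\sigma_f \in S_n$ is the conjugacy class by which $\Frob_q$ permutes the roots of $f$. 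Since $X_{m,n,h}$ parametrizes the ordered roots (with multiplicity) of $m$-tuples of polynomials $f_i$ with $f_i \sim f_j$, each such tuple $(f_1,\dots,f_m)$ defined over $\F_q$ is a single $\Frob_q$-stable $S_n^m$-orbit on $X_{m,n,h}(\overline{\F_q})$. Summing the twisted Lefschetz fixed-point formula
\[
\#\{\,x \in X_{m,n,h}(\overline{\F_q}) : \Frob_q(x) = \tau(x)\,\} = \sum_{r \ge 0}(-1)^r \Tr\!\bigl(\tau\,\Frob_q \bigm| H^r(X_{m,n,h},\Q_\ell)\bigr)
\]
over $\tau = (\tau_1,\dots,\tau_m) \in S_n^m$, weighted by $\psi_\varphi(\tau_1)\cdots\psi_\varphi(\tau_m)$, and re-organizing the right-hand side by $S_n^m$-orbits, I would obtain
\[
\sum_{\substack{f_1,\dots,f_m \in M_n(\F_q)\\ f_i \sim f_j}} \varphi(f_1)\cdots\varphi(f_m) = \frac{1}{(n!)^m}\sum_{\tau \in S_n^m}\psi_\varphi(\tau_1)\cdots\psi_\varphi(\tau_m)\sum_{r\ge 0}(-1)^r\Tr\!\bigl(\tau\,\Frob_q \bigm| H^r(X_{m,n,h},\Q_\ell)\bigr),
\]
where $\tau$ acts on cohomology through the $S_n^m$-action of Theorem~\ref{thm:intro}. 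For $\mu$ this identity is clean, since $\mu$ vanishes on non-squarefree polynomials and, up to a global sign, $\psi_\mu(\tau_1)\cdots\psi_\mu(\tau_m)$ is the character $\operatorname{sgn}^{\otimes m}$, which is orthogonal to the trivial character on any stabilizer containing a transposition, so orbits with nontrivial stabilizer contribute nothing; for $\Lambda - 1$ a short computation with stabilizer cosets shows the roots-with-multiplicity structure still records the correct value $\Lambda(P^k) - 1$ at non-squarefree tuples. These verifications belong to \S\ref{section:arithmetic}.

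Write $d = \dim X_{m,n,h} = n + (m-1)(h+1) - 1$. The class function $\psi_\varphi$ has vanishing average over $S_n$ — for $\mu$ because $\operatorname{sgn} \ne 1$, and for $\Lambda - 1$ because $(n-1)(n-1)! = n! - (n-1)!$, which is the combinatorial shadow of the function-field prime number theorem — so $\sum_{\tau \in S_n^m}\psi_\varphi(\tau_1)\cdots\psi_\varphi(\tau_m) = \bigl(\sum_{\sigma \in S_n}\psi_\varphi(\sigma)\bigr)^m = 0$. By Theorem~\ref{thm:intro}, $S_n^m$ acts trivially on $H^r(X_{m,n,h},\Q_\ell)$ for $2d - 2h - 1 \le r \le 2d$, so there $\Tr(\tau\,\Frob_q \mid H^r)$ is independent of $\tau$, and after the sum over $\tau$ these terms drop out; only $0 \le r \le 2d - 2h - 2 = 2(d-h-1)$ survive. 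Since $X_{m,n,h}$ is a projective complete intersection it is proper, so Deligne's purity bound gives $\lvert\Tr(\tau\,\Frob_q \mid H^r)\rvert \le \dim_{\Q_\ell}H^r(X_{m,n,h},\Q_\ell)\cdot q^{r/2}$ (the finite-order automorphism $\tau$ does not alter the archimedean absolute values of the eigenvalues of $\Frob_q$), while $\lvert\psi_\varphi(\tau_1)\cdots\psi_\varphi(\tau_m)\rvert \le n^m$; summing over $r$ yields
\[
\Bigl\lvert\sum_{\substack{f_1,\dots,f_m \in M_n(\F_q)\\ f_i \sim f_j}} \varphi(f_1)\cdots\varphi(f_m)\Bigr\rvert \le n^m\Bigl(\sum_{r=0}^{2(d-h-1)}\dim_{\Q_\ell}H^r(X_{m,n,h},\Q_\ell)\Bigr) q^{\,d-h-1}.
\]
The $\ell$-adic Betti numbers of $X_{m,n,h}$ are bounded by a constant depending only on $m,n,h$ (a complete intersection of fixed multidegree in $\proj^{mn-1}$, with no dependence on $q$ or on $p$ in the allowed range), so the parenthesized factor is a constant $C'_{m,n,h}$. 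Substituting into $\frac{1}{q^n}\sum_f\bigl(\sum_{\deg g \le h}\varphi(f+g)\bigr)^m = q^{\,h+1-n}\sum_{f_i \sim f_j}\varphi(f_1)\cdots\varphi(f_m)$ and using $d-h-1 = n + (m-2)(h+1) - 1$ gives the bound $C'_{m,n,h}\,q^{(m-1)(h+1)-1} \le C'_{m,n,h}\,q^{(m-1)(h+1)}$, uniformly in $q$ (with $p > n$ when $m > 2$); this is the theorem, with $C_{m,n,h} = D_{m,n,h} = C'_{m,n,h}$ — in fact with a saving of $q^{-1}$.

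The trace-formula estimate in the second paragraph is essentially formal once Theorem~\ref{thm:intro} is available: the only cohomology surviving the $S_n^m$-average lies in degrees $\le 2d - 2h - 2$, which is precisely the complement of the $(2h+2)$-degree window that Theorem~\ref{thm:intro} identifies with projective space, and Deligne's bound converts this window into the exponent $d - h - 1$, exactly matching the target. I expect the delicate point to be the first step — constructing the geometric dictionary so that the moment genuinely equals the displayed weighted average of twisted Lefschetz numbers, and in particular checking that the roots-with-multiplicity structure of $X_{m,n,h}$ reproduces the values of $\Lambda$ on prime powers with their correct multiplicities (equivalently, that the stabilizer-coset sums of $\psi_{\Lambda-1}$ come out right) — rather than the cohomological estimate itself.
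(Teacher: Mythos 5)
Your overall strategy is the paper's own: express $\Lambda - 1$ and $\mu$ through permutation indicators on ordered root tuples (the paper's ``arithmetic functions of von Mangoldt type,'' with exactly your class-function-averaged coefficients $c_\sigma = \psi_\varphi(\sigma)/n!$ in the M\"obius case), convert the moment into twisted Lefschetz numbers, cancel the top $2h+2$ cohomological degrees using triviality of the $S_n^m$-action there together with $\sum_\sigma \psi_\varphi(\sigma) = 0$, and bound the remaining degrees by Deligne's weight bound and Betti numbers depending only on $m, n, h$. The multiplicity bookkeeping you flag as delicate does work out: with the averaged coefficients the prime-power values of $\Lambda$ and the vanishing of the non-squarefree contribution for $\mu$ (pairing even and odd permutations by a transposition swapping equal roots) come out exactly as in the paper's Corollaries \ref{cor:von-mangoldt} and \ref{cor:mobius}.

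There is, however, a concrete error in your central identity: tuples $(f_1, \dots, f_m)$ with $f_i \sim f_j$ correspond to $S_n^m$-orbits on the \emph{affine cone} $Y_{m,n,h}$, not on the projective variety $X_{m,n,h}$ --- a projective point determines the $f_i$ only up to simultaneous scaling of all the roots. The twisted counts are related by $\#\set{y \in Y_{m,n,h} : \Frob_q(y) = \tau(y)} = 1 + (q-1)\,\#\set{x \in X_{m,n,h} : \Frob_q(x) = \tau(x)}$, so your displayed identity is off by a factor of roughly $q - 1$ (sanity check: for $\varphi \equiv 1$ your right-hand side is of size $q^{\dim X_{m,n,h}}$, while the left-hand side equals $q^{n + (m-1)(h+1)} = q^{\dim X_{m,n,h} + 1}$). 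The ``$+1$'' terms cancel because the coefficients have mean zero, but the factor $q - 1$ survives and exactly consumes your claimed extra saving: the corrected estimate is $O(q^{(h+1)(m-1)})$, which is the theorem and no better. Indeed the stronger bound $O(q^{(h+1)(m-1) - 1})$ you assert is false in general --- for $m = 2$ it would contradict Keating--Rudnick's asymptotic $\Var \psi_\Lambda(-;h) \sim (n - h - 2) q^{h+1}$. The paper avoids this by working on $Y_{m,n,h}$ throughout and carrying the $1/(q-1)$ explicitly (Lemma \ref{lem:pt-count} and Theorem \ref{thm:arith-bound}); with that correction your argument coincides with the paper's proof.
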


The upper bounds in Theorem \ref{thm:intro-arith-top} are not new: they are proved for the von Mangoldt and M\"obius functions by Keating and Rudnick \cite{KR}\cite{KR-mobius} and for more general arithmetic functions $\alpha$ by Rodgers \cite{Rodgers}. In fact, they prove more, not only showing that the bound holds for $m = 2$ (which implies the bound for all $m$), but giving an asymptotic of the form 
\[
\Var_{f \in M_n(\F_q)} \sum_{f \sim_h g} \alpha(g) = C_\alpha q^{h + 1} + O(q^{h + 1/2})
\]
for explicit constants $C_\alpha$. Their arguments use powerful equidistribution results of Katz and Sarnak; one of the motivations behind the present work is to show that the order of growth of the short-interval moments in the large $q$ limit can be obtained by more elementary algebraic geometry.  

Furthermore, in \S\ref{section:variance-cohom}, the combination of Rodgers' result and Theorem \ref{thm:intro} allows us to explicitly describe the top weight piece of the first ``interesting'' cohomology group of $X_{2, n, h}$ as a representation of $S_n \times S_n$. Often in arithmetic statistics, one uses facts about the geometry of moduli spaces to derive conclusions about counting problems over function fields; this result adds to the much shorter list (e.g., Browning--Vishe \cite{BV}) of situations where an argument in analytic number theory is used to prove something about the geometry of a moduli space. The resulting theorem gives a geometric interpretation of Rodgers' main theorem:
\begin{introthm}
  \label{thm:intro2}
  Fix integers $n$ and $h$ with $1 \leq h \leq n - 5$. Let
  \[
  \mathcal{H} := \Gr_W^{2n - 2} H^{2n - 2}(X_{2, n, h}, \Q_\ell).
  \]
  Then $\Frob_q$ acts on $\mathcal{H}$ by scalar multiplication by $q^{n - 1}$, and the action of $S_n \times S_n$ on $X_{2, n, h}$ induces an action on $\mathcal{H}$ such that
  \[
  \mathcal{H} \cong \bigoplus_{\substack{\lambda \vdash n \\ \lambda_1 \leq n - h - 2}} V_\lambda \boxtimes V_\lambda
  \]
  as an $S_n \times S_n$-representation. (The sum is over all partitions $\lambda$ of $n$ with all cycles of length at most $n - h - 2$, and $V_\lambda$ is the irreducible representation of $S_n$ associated to $\lambda$.)
\end{introthm}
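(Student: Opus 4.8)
The plan is to combine the geometric input from Theorem~\ref{thm:intro} with Rodgers' variance asymptotic, using the Grothendieck--Lefschetz trace formula as the bridge. First I would set up the point count: for each partition-type datum, or more simply summing over all $(f_1,f_2) \in M_n(\F_q)^2$ with $f_1 \sim_h f_2$ both squarefree and weighting by the sign characters, one obtains (up to normalization) the variance $\Var_{f} \sum_{f \sim_h g}\mu(g)$, which by Rodgers equals $C q^{h+1} + O(q^{h+1/2})$ with $C = \min(h+2,n) - \text{(correction)}$; the precise constant is what pins down the dimension of $\mathcal H$. On the geometric side, the trace formula expands this same count as an alternating sum $\sum_r (-1)^r \Tr(\Frob_q \mid H^r_c(X_{2,n,h}))$ twisted by the $S_n \times S_n$-isotypic projectors corresponding to the sign representations (since $\mu$ of a squarefree polynomial is the sign of the Frobenius permutation on its roots). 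Theorem~\ref{thm:intro} tells us the cohomology in the top $2h+2$ degrees is that of projective space with trivial $S_n\times S_n$-action, so those degrees contribute nothing to the sign-isotypic part; the first possibly-nonvanishing contribution is in degree $2n-2 = 2\dim X - 2h - 3$, which is exactly $H^{2n-2}$.

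Next I would pass to weights. Since $X_{2,n,h}$ is a projective complete intersection, $H^{2n-2}(X_{2,n,h},\Q_\ell)$ carries a mixed Hodge/weight structure with weights $\leq 2n-2$, and Poincar\'e--Lefschetz duality together with the complete-intersection structure identifies the graded piece $\Gr_W^{2n-2} H^{2n-2}$ as the ``primitive'' part on which Frobenius acts with all eigenvalues of absolute value $q^{n-1}$. The key point is that the lower-weight pieces of $H^{2n-2}$ and all of $H^r$ for $r > 2n-2$ (which by the structure of the singular locus contribute only in even degrees with weight-graded pieces that are Tate, hence trivial as $S_n\times S_n$-reps by the same argument as in Theorem~\ref{thm:intro}, after checking the codimension bound $2h+3$ forces these to be $S_n\times S_n$-trivial) must not interfere with the sign-isotypic bookkeeping. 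So after applying the sign-isotypic projector, the entire main term $C q^{n-1}$ of the variance must come from $\Gr_W^{2n-2}H^{2n-2}$, and matching the $q^{n-1}$ power forces $\Frob_q$ to act on $\mathcal H$ by the scalar $q^{n-1}$ and forces $\dim \mathcal H^{\text{sgn}\boxtimes\text{sgn-twisted part}} = C$.

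To get the full $S_n \times S_n$-representation (not just the sign-isotypic multiplicity), I would run the same argument for every irreducible character, i.e., for the count of $(f_1, f_2)$ weighted by $\chi_\lambda(\sigma_1)\chi_\mu(\sigma_2)$ where $\sigma_i$ is the Frobenius permutation on the roots of $f_i$. These generalized counts are governed by the same Katz--Sarnak / Rodgers equidistribution machinery (indeed this is exactly the content needed to make Rodgers' theorem go through for general $\alpha$, or can be extracted from the hyperelliptic/superelliptic ensembles Keating--Rudnick use): the $\lambda$-$\mu$ cross-correlation has main term $q^{n-1}$ times $\langle \text{something} \rangle$, which vanishes unless $\lambda = \mu$ and unless $\lambda_1 \leq n-h-2$, in which case the multiplicity is $1$. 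Feeding these back through the trace formula, each gives the $V_\lambda \boxtimes V_\mu$-multiplicity in $\mathcal H$, yielding the claimed decomposition $\mathcal H \cong \bigoplus_{\lambda_1 \leq n-h-2} V_\lambda \boxtimes V_\lambda$.

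The main obstacle I anticipate is the weight/cohomology bookkeeping in the range above the middle degree: I must show that after applying the non-trivial isotypic projectors, there is no contribution from $H^r$ with $r > 2n-2$ and no contribution from the lower-weight subquotients of $H^{2n-2}$ itself, so that $\Gr_W^{2n-2}H^{2n-2}$ is cleanly isolated. Theorem~\ref{thm:intro} handles the top $2h+2$ degrees but says nothing about the intermediate range $2n-1 \le r < 2\dim X - 2h - 1$; here I would need to invoke the singular-locus codimension bound $2h+3$ together with a Lefschetz-type argument (e.g., the theory in \cite{sing-var} controlling the weight filtration on complete intersections via the codimension of singularities) to argue that the relevant subquotients are Tate and $S_n\times S_n$-trivial, and then use the purity of $\Gr_W^{2n-2}$ plus the $q^{1/2}$ error term in Rodgers to conclude that the non-Tate part lives exactly in weight $2n-2$, degree $2n-2$. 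The constraint $h \le n-5$ in the statement (rather than $h \le n-3$) is presumably exactly what is needed to make this intermediate range nonempty-or-controllable and to ensure the combinatorial condition $\lambda_1 \le n - h - 2$ is the only obstruction.
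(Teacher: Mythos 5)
Your proposal is essentially the paper's own argument: the paper likewise feeds Rodgers' covariance asymptotics for factorization functions (using conjugacy-class indicator functions $\varphi_\sigma$ rather than character-weighted counts, which is just a change of basis in class functions) through the Grothendieck--Lefschetz trace formula, uses Theorem~\ref{thm:intro} to identify the Tate, $S_n\times S_n$-trivial contributions in degrees $r \geq 2n-1$, and then uses purity of $\Gr_W^{2n-2}H^{2n-2}$ together with the identity over every $\F_{q^N}$ to pin down the scalar Frobenius action, the dimension, and the character. The only correction: your anticipated obstacle is vacuous, since $\dim X_{2,n,h} = n+h$ makes the range covered by Theorem~\ref{thm:intro} exactly $2n-1 \leq r \leq 2n+2h$ (there is no intermediate range), and the remaining contributions --- degrees $r \leq 2n-3$ and the lower-weight subquotients of $H^{2n-2}$ --- need not be shown Tate or $S_n\times S_n$-trivial: since $X_{2,n,h}$ is proper, Deligne's weight bound makes them $O(q^{n-3/2})$, so they are simply absorbed into the $O(q^{h+1/2})$ error from Rodgers, exactly as in the paper.
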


It would be very interesting to know whether Theorem \ref{thm:intro2} can be derived by purely algebro-geometric means.

\section{Analytic background}
\label{section:intro}
\subsection{Primes in short intervals}
The starting point is the classical prime number theorem, which states that
\[
\pi(X) \sim \frac{X}{\log(X)}
\]
as $X \to \infty$, where $\pi(X)$ is the number of prime numbers $\leq X$. Defining the von Mangoldt function by
\[
\Lambda(n) = \begin{cases}
  \log(p) & \text{if } n = p^k,\ p \text{ prime}, \\
  0 & \text{otherwise},
\end{cases}
\]
the prime number theorem is equivalent to the asymptotic formula
\[
\sum_{n \leq X} \Lambda(n) \sim X.
\]

One may ask what happens if we only look at an interval of width $H$ centered on $X$. If $H$ grows at least as fast as $X^{\frac{1}{2} + \eps}$ for some $\eps > 0$, then the Riemann hypothesis implies
\[
\psi(X; H) := \sum_{X - \frac{H}{2} \leq n \leq X + \frac{H}{2}} \Lambda(n) \sim H.
\]
Of course, as $H$ grows more slowly relative to $X$, the behavior of $\psi(X; H)$ becomes less regular. (In the extreme case of $H$ constant, this is the subject of the Hardy--Littlewood prime tuples conjecture.)

Goldston and Montgomery \cite{GM} studied the variance of $\psi(X; H)$ while allowing for shorter intervals, namely, where $X^\delta < H < X^{1 - \delta}$ for some $\delta > 0$. They proved that, assuming the Riemann hypothesis, the estimate
\[
\frac{1}{X} \int_{2}^{X} (\psi(x; H) - H)^2 dx \sim H \log(X/H)
\]
is equivalent to a strong form of Montgomery's pair correlation conjecture on the distribution of nontrivial zeroes of the Riemann zeta function.

\subsection{The prime polynomial theorem}
Under the function field analogy, $\Z$ corresponds to $\F_q[x]$, prime numbers correspond to irreducible polynomials, ``positive'' corresponds to ``monic'', and $\log$ corresponds to degree. Thus, we define the von Mangoldt function for $f \in \F_q[x]$ by
\[
\Lambda(f) = \begin{cases}
  \deg(g) & \text{if } f = g^k,\ g \text{ irreducible}, \\
  0 & \text{otherwise}.
\end{cases}
\]

The ``prime polynomial theorem'' for $\F_q[x]$ is the statement that
\[
\sum_{f \in M_n(\F_q)} \Lambda(f) = q^n.
\]
As discussed in \cite[\S3.1]{KR}, this follows easily from the fact that the zeta function of $\aff_{\F_q}^1$ has the very simple form
\[
Z(T) = \frac{1}{1 - qT}.
\]

We now give a different proof that reflects our geometric viewpoint.

\begin{proof}
  Let $f \in M_n(\F_q)$, and let $z_1, \dots, z_n \in \F_{q^n}$ be the roots of $f$ (counted with multiplicity). Let $\Frob_q: z \mapsto z^q$ be the Frobenius endomorphism, which generates the cyclic group $\Gal(\F_{q^n}/\F_q)$. Fix an $n$-cycle $\sigma \in S_n$. Then $f = g^k$ for some irreducible $g$ if and only if $\Frob_q(z_i) = z_{\sigma(i)}$ for all $i$ (possibly after reordering the $z_i$), in which case $\Lambda(f)$ is the number of distinct roots of $f$.

  Conversely, for each $z \in \F_{q^n}$, we have $\prod_{i=0}^{n - 1} (x - z^{q^i}) \in M_n(\F_q)$; given $f \in M_n(\F_q)$, there are exactly $\Lambda(f)$ ways $f$ can arise in this way. So
  \[
  \sum_{f \in M_n(\F_q)} \Lambda(f) = \# \F_{q^n} = q^n.
  \qedhere
  \]
\end{proof}

Proposition \ref{prop:m=1} uses essentially the same argument (for a more general class of arithmetic functions), reformulated in terms of $\ell$-adic \'etale cohomology via the Grothendieck--Lefschetz trace formula. The fact that certain arithmetic functions can be expressed in terms of the action of Frobenius is central to our method.

\subsection{Prime polynomials in short intervals}
\label{section:short-intervals}
As mentioned earlier, ``nearness'' in $M_n(\F_q)$ is given by the condition $\deg(f - g) \leq h$, in which case we write $f \sim g$. This relation partitions $M_n(\F_q)$ into intervals of size $H = q^{h + 1}$. Given an arithmetic function $\varphi$, we are interested in the distribution of
\[
\psi_\varphi(g; h) := \sum_{f \sim g} \varphi(f)
\]
as $g$ ranges over $M_n(\F_q)$.

In addition to the asymptotics as $n \to \infty$, we may also study what happens as $q \to \infty$. So far, the latter limit has proved more tractable.

For $\varphi = \Lambda$, Keating and Rudnick \cite{KR} studied the variance of $\psi_\Lambda(g; h)$ for fixed $n$ in the limit as $q \to \infty$, ranging over all prime powers $q$. Using an equidistribution theorem of Katz, they proved:

\begin{thm}[{\cite[Thm.\ 2.1]{KR}}]
  Let $1 \leq h < n - 3$. Then
  \[
  \lim_{q \to \infty} \frac{1}{q^{h + 1}} \Var \psi_\Lambda(-; h) = n - h - 2.
  \]
\end{thm}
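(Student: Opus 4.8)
The theorem to prove is the Keating–Rudnick variance formula $\lim_{q\to\infty} q^{-(h+1)}\Var\psi_\Lambda(-;h) = n-h-2$. I would deduce this from the geometric input of Theorem~\ref{thm:intro} (with $m=2$) via the Grothendieck–Lefschetz trace formula. First I would set up the point count: for a fixed $2$-cycle-type datum, identify $\sum_{f\sim g}\Lambda(f)$ as counting pairs of roots-with-Frobenius-data on $X_{2,n,h}$, so that the second moment $\frac{1}{q^n}\sum_{g\in M_n(\F_q)}\psi_\Lambda(g;h)^2$ becomes (up to the normalizing $q^{h+1}$) a count of $\F_q$-points of $X_{2,n,h}$ weighted by the permutation action of $\Frob_q$ on the two sets of roots. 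Concretely, one restricts to the locus where $\Frob_q$ acts as an $n$-cycle on each of $f_1$ and $f_2$; by the argument already given for the $m=1$ case (the reinterpretation of the prime polynomial theorem), $\Lambda(f_i)$ equals the number of distinct roots, and the $n$-cycle condition picks out prime powers. This is the content that Proposition~\ref{prop:m=1} (referenced in the excerpt) generalizes, so I would first establish the analogous $m=2$ statement identifying the moment sum with $\Tr(\Frob_q \mid H^*_c)$ against the appropriate $S_n\times S_n$-isotypic projector.

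\emph{Applying the trace formula.} Next I would apply Grothendieck–Lefschetz to $X_{2,n,h}$ (or rather to the relevant open subvariety / the whole projective variety, tracking the compactly-supported cohomology and the $S_n\times S_n$-action). Theorem~\ref{thm:intro} tells us that in the top $2h+2$ cohomological degrees, $H^r(X_{2,n,h},\Q_\ell)$ is $\Q_\ell(-r/2)$ for $r$ even and $0$ for $r$ odd, \emph{and} that $S_n^2$ acts trivially there. So when we project onto the sign-twisted-by-$n$-cycle isotypic components (the characters relevant to extracting $\Lambda$), these top cohomology groups contribute $0$: they are trivial $S_n\times S_n$-representations, hence killed by any projector attached to a nontrivial character, and the $n$-cycle projector is nontrivial. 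This is the key point: the ``expected main term'' of size $q^{\dim X} = q^{n+(h+1)-1} = q^{n+h}$ (which after dividing by $q^{h+1}q^n$... wait, let me recompute — $\dim X_{2,n,h} = n+(h+1)-1 = n+h$, so the top cohomology gives $q^{n+h}$, and after the normalization $q^{h+1}\cdot q^{-n}$ this would be order $q^{2h+1}$, too big) — so the vanishing of these pieces in the relevant isotypic component is exactly what brings the moment down to its true order of growth. The upper bound $C_{m,n,h}q^{(h+1)(m-1)}$ in Theorem~\ref{thm:intro-arith-top}, specialized to $m=2$, gives $O(q^{h+1})$, consistent with the normalization.

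\emph{Pinning down the constant.} The cohomological vanishing alone yields the \emph{bound} $\Var\psi_\Lambda(-;h) = O(q^{h+1})$ but not the precise constant $n-h-2$. To get the exact limit I would argue that the leading term comes from a single cohomology group in the critical degree $r = 2(n+h) - (2h+2) = 2n-2$, namely $\Gr_W^{2n-2}H^{2n-2}(X_{2,n,h},\Q_\ell)$ — exactly the object $\mathcal H$ of Theorem~\ref{thm:intro2}. One then needs: (i) all lower-degree cohomology contributes $o(q^{h+1})$ after normalization (a weight/purity estimate, using that $X$ is a complete intersection so Poincaré-type bounds control the weights of $H^r_c$ for $r<2n-2$), and (ii) $\Frob_q$ acts on $\mathcal H$ by $q^{n-1}$ with the $S_n\times S_n$-structure $\bigoplus_{\lambda\vdash n,\ \lambda_1\le n-h-2} V_\lambda\boxtimes V_\lambda$. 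Granting (ii), the contribution to the moment is $q^{n-1}\cdot q^{-n}\cdot q^{h+1}\cdot(\text{multiplicity of the }n\text{-cycle character in }\mathcal H)$; the multiplicity of the relevant one-dimensional character in $\bigoplus V_\lambda\boxtimes V_\lambda$, by Schur orthogonality, is $\#\{\lambda\vdash n: \lambda_1\le n-h-2\}$-weighted by the character values, which a standard symmetric-function computation evaluates to $n-h-2$. The honest difficulty, and the reason the paper's actual route is the reverse, is step (ii): proving the decomposition of $\mathcal H$ algebro-geometrically seems hard, so instead one \emph{imports} the Keating–Rudnick/Rodgers variance asymptotic to deduce the structure of $\mathcal H$ (this is precisely Theorem~\ref{thm:intro2}). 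Thus within the logical framework of this paper, the cleanest proof of the stated theorem is: combine the $m=2$ cohomological vanishing of Theorem~\ref{thm:intro} with Rodgers' analytic input, or simply cite \cite{KR} directly — the main obstacle being that a fully self-contained geometric derivation of the constant $n-h-2$ is not available, only the order of magnitude.
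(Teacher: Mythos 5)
You have correctly diagnosed the logical situation, and your conclusion coincides with what the paper actually does: this statement is not proved in the paper at all, but quoted from Keating--Rudnick \cite[Thm.\ 2.1]{KR}, where it is established via Katz's equidistribution theorem. The paper's own geometric machinery (Theorem \ref{thm:intro} with $m=2$, fed through Lemma \ref{lem:pt-count}) only recovers the order of magnitude, i.e.\ $\Var \psi_\Lambda(-;h) = \bigO(q^{h+1})$ with an unspecified constant, as the authors themselves emphasize. Your proposed route to the exact constant --- reading it off from $\mathcal{H} = \Gr_W^{2n-2} H^{2n-2}(X_{2,n,h},\Q_\ell)$ --- would indeed work if the structure of $\mathcal{H}$ were known independently: since $X^\lambda$ vanishes on an $n$-cycle unless $\lambda$ is a hook $(n-k,1^k)$, where it equals $\pm 1$, the relevant sum over $\lambda$ with $\lambda_1 \leq n-h-2$ counts the hooks with $k \geq h+2$, giving exactly $n-h-2$. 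But, as you yourself note, within this paper the description of $\mathcal{H}$ (Theorem \ref{thm:intro2}) is \emph{deduced from} Rodgers' covariance theorem, which subsumes the Keating--Rudnick variance, so that route is circular; no independent geometric computation of $\mathcal{H}$ is available (the paper poses this as an open question). Two minor slips in your sketch, neither load-bearing: the top-range contribution to the second moment is of size $q^{2h+2}$ (the affine cone has dimension $n+h+1$), not $q^{2h+1}$, and it is removed not by an isotypic projector per se but by mean-centering --- the trivial $S_n\times S_n$-action on the top cohomology makes that contribution proportional to $\sum_\tau c_\tau$, which vanishes for the mean-zero function $\Lambda - 1$ (equivalently, it cancels against $(\mathbb{E}\psi_\Lambda)^2 = q^{2h+2}$ in the variance). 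With those caveats, your bottom line --- that the constant $n-h-2$ must be imported from \cite{KR} or \cite{Rodgers} and cannot currently be obtained from the geometry alone --- is exactly the paper's position.
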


One may similarly define the M\"obius function $\mu$ on $\F_q[x]$ and study the distribution of $\mu$ in short intervals. Using similar methods, Keating and Rudnick prove an analogous result in a recent paper:

\begin{thm}[{\cite[Thm.\ 1.2]{KR-mobius}}]
  Let $0 \leq h \leq n - 5$. Then as $q \to \infty$ with $q$ odd,
  \[
  \Var \psi_\mu(-, h) \sim q^{h + 1}.
  \]
\end{thm}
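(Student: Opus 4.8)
The plan is to recover this the way Keating and Rudnick do in \cite{KR-mobius}: pass from short intervals to Dirichlet characters, reinterpret the resulting character sums via Weil's Riemann Hypothesis for curves, and feed the Frobenius conjugacy classes into an equidistribution theorem of Katz. First reduce to the second moment: the mean $\frac{1}{q^n}\sum_{g\in M_n(\F_q)}\psi_\mu(g;h) = \frac{q^{h+1}}{q^n}\sum_{f\in M_n(\F_q)}\mu(f)$ vanishes for $n\geq 2$, since $\sum_{f\in M_n(\F_q)}\mu(f)$ is the coefficient of $u^n$ in $1/Z_{\aff^1_{\F_q}}(u) = 1-qu$. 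So $\Var\psi_\mu(-;h) = \frac{1}{q^n}\sum_{g}\psi_\mu(g;h)^2$, and it suffices to estimate this second moment.

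Next, dualize the short-interval condition. For $f = x^n+a_{n-1}x^{n-1}+\dots+a_0$ write $f^\ast(x) = x^n f(1/x) = 1+a_{n-1}x+\dots+a_0 x^n$; then $f\sim_h g \iff f^\ast\equiv g^\ast\pmod{x^{n-h}}$, and as $f$ ranges over $M_n(\F_q)$ the residue $f^\ast\bmod x^{n-h}$ ranges $q^{h+1}$-to-one over the group $1+x\,\F_q[x]/(x^{n-h})$, of order $q^{n-h-1}$. Parseval's identity on this group, together with the standard dictionary identifying its characters with the ``even'' Dirichlet characters modulo $x^{n-h}$, rewrites $\frac{1}{q^n}\sum_g\psi_\mu(g;h)^2$ as $\frac{1}{q^{2(n-h-1)}}\sum_{\chi}\bigl\lvert\sum_{f\in M_n(\F_q)}\mu(f)\chi(f)\bigr\rvert^2$, the sum over even $\chi$ modulo $x^{n-h}$ (up to a harmless degree-dependent unit twist that disappears under the absolute value). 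The principal character contributes $0$, and the $O(q^{n-h-2})$ even characters of conductor $x^j$ with $j<n-h$ contribute a lower-order term, so the main term comes from primitive even characters modulo $x^{n-h}$.

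Now express the inner sum through Frobenius eigenvalues and apply equidistribution. Since $\sum_f\mu(f)\chi(f)u^{\deg f} = 1/L(u,\chi)$, and for $\chi$ primitive even modulo $x^{n-h}$ the $L$-polynomial factors as a degree-one ``trivial'' factor times $\prod_{j=1}^{N}(1-\gamma_j u)$ with $N = n-h-2$ and $\lvert\gamma_j\rvert = \sqrt q$ (Weil's Riemann Hypothesis), the coefficient of $u^n$ in $1/L(u,\chi)$ equals $q^{n/2}\,h_n(\Theta_\chi)$ up to terms of strictly lower order in $q$, where $\Theta_\chi\in U(N)$ is the unitarized Frobenius conjugacy class and $h_n = s_{(n)}$ is the complete homogeneous symmetric polynomial (a one-row Schur polynomial). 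Summing over the $\sim q^{n-h-1}$ primitive even characters gives $\Var\psi_\mu(-;h) = q^{h+1}\operatorname{avg}_{\chi}\lvert h_n(\Theta_\chi)\rvert^2 + o(q^{h+1})$. By Katz's theorem, the geometric monodromy of the family of even primitive Dirichlet $L$-functions modulo $x^{n-h}$ is the full unitary group $U(N)$ — and this is exactly where the hypothesis $h\leq n-5$, i.e.\ $N\geq 3$, is used, to ensure the monodromy is large enough; Deligne's equidistribution theorem then makes the classes $\Theta_\chi$ equidistribute in $U(N)$ as $q\to\infty$, so $\operatorname{avg}_{\chi}\lvert h_n(\Theta_\chi)\rvert^2\to\int_{U(N)}\lvert h_n(U)\rvert^2\,dU$. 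This integral equals $1$, since the $s_\lambda$ with at most $N$ rows are the irreducible polynomial characters of $U(N)$, hence orthonormal, and $(n)$ is a single-row partition. Therefore $\Var\psi_\mu(-;h)\sim q^{h+1}$.

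The main obstacle is the input to the last step — Katz's determination of the geometric monodromy group of this family of $L$-functions — which rests on Deligne's equidistribution theorem together with a delicate analysis of the associated sheaves, and is where the precise hypothesis $h\leq n-5$ is forced. The character-sum manipulation in the second step is routine but requires care with the even/odd dichotomy and with verifying that the imprimitive characters are genuinely negligible.
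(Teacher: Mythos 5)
This theorem is quoted by the paper from Keating--Rudnick \cite[Thm.\ 1.2]{KR-mobius} and is not proved here; the paper's own geometric machinery only recovers the one-sided statement $\Var \psi_\mu(-,h) = \bigO(q^{h+1})$ (Corollary \ref{cor:mobius} with $m = 2$), without the constant. Your sketch is, in outline, the original Keating--Rudnick proof, and it is correct as a sketch: the mean vanishes since $\sum_{f \in M_n(\F_q)} \mu(f)$ is the $u^n$-coefficient of $1 - qu$; the involution $f \mapsto f^\ast$ converts short intervals into arithmetic progressions modulo $x^{n-h}$, and Parseval over the group $1 + x\F_q[x]/(x^{n-h})$ (whose characters correspond to even Dirichlet characters) reduces the variance to $q^{-2(n-h-1)}\sum_\chi \lvert \sum_f \mu(f)\chi(f)\rvert^2$; the principal and imprimitive characters are negligible by the Riemann Hypothesis for curves; for primitive even $\chi$ the identity $\sum_f \mu(f)\chi(f)u^{\deg f} = 1/L(u,\chi)$ with $L(u,\chi) = (1-u)\det(1 - u\sqrt{q}\,\Theta_\chi)$, $\Theta_\chi$ of size $N = n-h-2$, gives the $u^n$-coefficient $q^{n/2} h_n(\Theta_\chi)$ up to lower order; and Katz's equidistribution theorem plus $\int \lvert s_{(n)} \rvert^2 = 1$ (Schur orthogonality) yields the constant $1$. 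Two small imprecisions worth fixing: for \emph{even} primitive characters the unitarized Frobenius is only well defined up to a scalar, so Katz's equidistribution statement is for the projective unitary group rather than $U(N)$ --- harmless here because $\lvert h_n \rvert^2$ is scalar-invariant, but it should be said --- and the hypothesis that $q$ is odd also enters through Katz's theorem, which you do not mention. The comparison with this paper is the interesting point: the geometric route via $X_{2,n,h}$ (Grothendieck--Lefschetz plus the cohomology computation of Theorem \ref{thm:intro}) avoids any equidistribution or monodromy input and bounds \emph{all} moments $m \geq 2$, but it sees only the order of magnitude $q^{h+1}$, since the constant is controlled by the uncomputed group $\mathcal{H} = \Gr_W^{2n-2} H^{2n-2}(X_{2,n,h},\Q_\ell)$; conversely, the analytic route you outline (and Rodgers' refinement of it, Theorem \ref{thm:rodgers}) pins down the constant, and the paper in fact runs this implication in reverse in \S\ref{section:variance-cohom} to determine $\mathcal{H}$ as an $S_n \times S_n$-representation. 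So your argument is a legitimate proof strategy provided Katz's monodromy theorem (the source of the restriction $h \leq n - 5$, i.e.\ $N \geq 3$) is cited rather than reproved, but it is not the perspective of this paper.
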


Rodgers generalized this to a larger class of arithmetic functions: We say $\alpha: M_n(\F_q) \to \mathbb{C}$ is a \emph{factorization function} if $\alpha(f)$ depends only on the degrees and multiplicities of the factorization of $f$. We can uniquely decompose $\alpha$ as a sum
\[
\alpha(f) = \sum_{\lambda \vdash n} \hat{\alpha}_\lambda X^\lambda(f) + b(f),
\]
where $b$ is supported on the non-squarefree locus of $M_n(\F_q)$, and $X^\lambda$ is the irreducible character of $S_n$ associated to a partition $\lambda$ of $n$. For $f \in M_n(\F_q)$, we define $X^\lambda(f) = 0$ if $f$ is not squarefree, and $X^\lambda(f) = X^\lambda(\sigma_f)$ if $f$ is squarefree and $\sigma_f \in S_n$ has the same cycle type as the action of Frobenius $\Frob_q$ on the roots of $f$.

\begin{thm}[{\cite[Thm.\ 3.1]{Rodgers}}]
  \label{thm:rodgers}
  Fix integers $h$ and $n$ with $0 \leq h \leq n - 5$. Let $\alpha: M_n(\F_q) \to \mathbb{C}$ be a factorization function as above. Then
  \[
  \Var_{f \in M_n(\F_q)} \Bigl( \sum_{f \sim_h g} \alpha(g) \Bigr) = q^{h + 1} \sum_{\substack{\lambda \vdash n \\ \lambda_1 \leq n - h - 2}} \lvert \hat{\alpha}_\lambda \rvert^2 + \bigO(q^{h + 1/2}),
  \]
  where $\lambda = (\lambda_1, \lambda_2, \dots, \lambda_k)$ ranges over all partitions of $n$ (with $\lambda_1 \geq \lambda_2 \geq \dots \geq \lambda_k$ by convention) such that $\lambda_1 \leq n - h - 2$.
\end{thm}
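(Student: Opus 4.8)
The plan is to follow the method of Keating--Rudnick \cite{KR}, which Rodgers \cite{Rodgers} adapts to this generality: convert the variance into a family of twisted character sums, express each sum through the inverse zeros of a Dirichlet $L$-function, and then apply an equidistribution theorem of Katz together with a random-matrix integral.

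\textbf{Step 1 (reduction to short-interval characters).} The reversal $f \mapsto f^\ast(x) := x^n f(1/x)$ is multiplicative and carries $M_n(\F_q)$ bijectively onto the monic polynomials with constant term $1$; under it the relation $f \sim_h g$ becomes the congruence $f^\ast \equiv g^\ast \pmod{x^{n-h}}$. I would detect this congruence using the even Dirichlet characters modulo $x^{n-h}$ (those trivial on $\F_q^\times$; there are $q^{n-h-1}$ of them), writing $\chi(f) := \chi(f^\ast)$, and apply orthogonality over $g \in M_n(\F_q)$ to get
\[
\Var_{f \in M_n(\F_q)}\Bigl(\sum_{f \sim_h g}\alpha(g)\Bigr) = \frac{1}{q^{2(n-h-1)}}\sum_{\chi \neq \chi_0}\bigl\lvert\Psi_\alpha(\chi)\bigr\rvert^2, \qquad \Psi_\alpha(\chi) := \sum_{f \in M_n(\F_q)}\alpha(f)\chi(f),
\]
with the trivial character responsible exactly for the square of the mean, which the variance removes. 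Writing $\alpha = \sum_{\lambda \vdash n}\hat\alpha_\lambda X^\lambda + b$, the task becomes evaluating $\Psi_{X^\lambda}(\chi)$ for primitive even $\chi$, the term $\Psi_b(\chi)$ being pushed into the error.

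\textbf{Step 2 (character sums as Schur polynomials).} For a primitive even character $\chi$ modulo $x^{n-h}$, the $L$-function $L(u,\chi) = \sum_{f\text{ monic}}\chi(f)u^{\deg f}$ is a polynomial of degree $N := n-h-2$, so by Weil's Riemann hypothesis $L(u,\chi) = \det(1 - \sqrt q\,u\,\Theta_\chi)$ for a unitary conjugacy class $\Theta_\chi$ in $U(N)$. Grouping squarefree $f$ of degree $n$ by the cycle type $\mu \vdash n$ of Frobenius on their roots gives $\Psi_{X^\lambda}(\chi) = \sum_{\mu\vdash n}\chi^\lambda(\mu)\prod_i e_{a_i(\mu)}\bigl(\{\chi(P) : \deg P = i\}\bigr)$, where $\chi^\lambda$ is the character of $S_n$ and $e_k$ the elementary symmetric functions. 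Feeding the relation between $\log L(u,\chi)$ and the power sums of the eigenvalues of $\Theta_\chi$ into the Frobenius character formula and extracting the top-weight part should yield
\[
\Psi_{X^\lambda}(\chi) = c_\lambda\, q^{n/2}\, s_{\lambda'}(\Theta_\chi) + \bigO\bigl(q^{(n-1)/2}\bigr),
\]
where $c_\lambda \in \{\pm 1\}$, $\lambda'$ is the \emph{conjugate} partition (the transpose enters through the elementary symmetric functions above, i.e.\ the involution $s_\lambda \leftrightarrow s_{\lambda'}$), and $s_{\lambda'}(\Theta_\chi)$ is the Schur polynomial in the $N$ eigenvalues of $\Theta_\chi$ --- which vanishes identically precisely when $\ell(\lambda') = \lambda_1 > N$. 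The imprimitive $\chi$, numbering $\bigO(q^{n-h-2})$ and giving $L$-functions of strictly smaller degree, only affect the error.

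\textbf{Step 3 (equidistribution and the matrix integral).} Substituting Step 2 into Step 1, the variance becomes
\[
\Var_{f \in M_n(\F_q)}\Bigl(\sum_{f\sim_h g}\alpha(g)\Bigr) = q^{h+1}\,\mathbb{E}_{\chi}\Bigl\lvert\sum_{\lambda\vdash n}\hat\alpha_\lambda\, c_\lambda\, s_{\lambda'}(\Theta_\chi)\Bigr\rvert^2 + \bigO\bigl(q^{h+1/2}\bigr),
\]
the expectation over primitive even $\chi$ modulo $x^{n-h}$. Here I would invoke Katz's equidistribution theorem for this family: its geometric monodromy group is as large as the functional equation permits, provided $N \geq 3$ (this is the source of the hypothesis $h \leq n-5$), so the classes $\Theta_\chi$ become equidistributed in $U(N)$ as $q \to \infty$ with effective discrepancy $\bigO(q^{-1/2})$ coming from Deligne's bounds. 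Passing to the limit replaces $\mathbb{E}_\chi$ by $\int_{U(N)}$, and the orthogonality of Schur polynomials on the unitary group (Diaconis--Shahshahani; Bump--Diaconis),
\[
\int_{U(N)} s_{\lambda'}(U)\,\overline{s_{\mu'}(U)}\,dU = \begin{cases}1 & \text{if } \lambda = \mu \text{ and } \lambda_1 \leq N,\\ 0 & \text{otherwise,}\end{cases}
\]
together with the fact that the signs $c_\lambda$ square away on the diagonal, collapses the sum to $q^{h+1}\sum_{\lambda:\,\lambda_1 \leq n-h-2}\lvert\hat\alpha_\lambda\rvert^2 + \bigO(q^{h+1/2})$. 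Since $\lvert\lambda'\rvert = \lvert\mu'\rvert = n$, integrating over $U(N)$ or over the actual, possibly smaller, compact monodromy group gives the same value in the range that matters.

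\textbf{The main obstacle.} The error term collects four contributions of size $\bigO(q^{h+1/2})$ or smaller: the effective equidistribution rate, the lower-order terms of the $L$-function formula, the imprimitive characters, and the non-squarefree part $b$ --- for which one writes $\sum_f b(f)\chi(f)u^{\deg f}$ via the decomposition $f = d^2 e$ into squarefull and squarefree parts, so that it factors through $\chi^2$ with reduced effective degree, whence Weil's bound gives $\Psi_b(\chi) \ll_n q^{(n-1)/2}$ and hence an $\bigO(q^h)$ contribution to the variance. The genuinely deep ingredient, and the principal obstacle, is Katz's determination of the geometric monodromy group of the short-interval family --- exactly the input that the present paper aims to sidestep when only the order of growth is needed. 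A secondary difficulty is the bookkeeping in Step 2: pinning down the leading symmetric function as $s_{\lambda'}$ while controlling all lower-order terms uniformly in $\chi$.
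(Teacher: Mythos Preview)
This theorem is not proved in the present paper; it is quoted from Rodgers \cite{Rodgers} as external analytic input, and is then used in \S\ref{section:variance-cohom} to determine the top weight piece of $H^{2n-2}(X_{2,n,h},\Q_\ell)$ as an $S_n \times S_n$-representation. There is therefore no ``paper's own proof'' to compare your proposal against.

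That said, your sketch is a faithful outline of Rodgers' actual argument in \cite{Rodgers}: the reversal $f \mapsto f^\ast$ converting short intervals to congruences modulo $x^{n-h}$, the expression of $\Psi_{X^\lambda}(\chi)$ through Schur polynomials in the eigenvalues of the unitarized Frobenius $\Theta_\chi$, Katz's equidistribution of the $\Theta_\chi$ in $U(N)$ with $N = n-h-2$, and the Schur orthogonality relations over $U(N)$ are exactly the ingredients of \cite[Thm.\ 3.1]{Rodgers}. Your identification of Katz's monodromy computation as the deep input is also correct, and this is precisely what the geometric approach of the present paper is designed to circumvent --- at the cost of obtaining only the order of growth rather than the explicit leading constant.
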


We will use the above bounds in \S\ref{section:variance-cohom} to prove Theorem \ref{thm:intro2}.

In \S\ref{section:arithmetic}, we will use the geometric results stated above in Theorem \ref{thm:intro} to deduce the following bounds (proved in Corollaries \ref{cor:von-mangoldt} and \ref{cor:mobius}):
\begin{thm}
  \label{thm:intro-arith}
  For any integers $m \geq 2$, $n \geq 4$, and $1 \leq h \leq n - 3$, there are constants $C_{m, n, h}$ and $D_{m, n, h}$ such that for every prime $p$ (assuming $p > n$ if $m > 2$) and every positive integer power $q = p^r$,
  \[
  \bigg\lvert \frac{1}{q^n} \sum_{f \in M_n(\F_q)} \Bigl( \sum_{\substack{g \in \F_q[x] \\ \deg g \leq h}} [\Lambda(f + g) - 1] \Bigr)^m \bigg\rvert \leq C_{m, n, h} q^{(h + 1)(m - 1)} \\
  \]
  and
  \[
  \bigg\lvert \frac{1}{q^n} \sum_{f \in M_n(\F_q)} \Bigl( \sum_{\substack{g \in \F_q[x] \\ \deg g \leq h}} \mu(f + g) \Bigr)^m \bigg\rvert \leq D_{m, n, h} q^{(h + 1)(m - 1)}.
  \]
\end{thm}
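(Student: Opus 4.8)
The plan is to reinterpret each moment, via the Grothendieck--Lefschetz trace formula, as a $\Q$-linear combination of Frobenius traces on $H^\bullet(X_{m,n,h},\Q_\ell)$; to use Theorem~\ref{thm:intro} to evaluate the top $2h+2$ cohomological degrees exactly and Deligne's weight bound to control the rest; and finally to use the fact that $\Lambda-1$ and $\mu$ are ``mean-zero''---which is precisely the prime polynomial theorem, respectively the identity $\sum_{f\in M_n(\F_q)}\mu(f)=0$ for $n\ge2$---to force the main term to cancel. Concretely, both $\Lambda$ and $\mu$ depend only on the cycle type of the permutation by which $\Frob_q$ acts on the roots of $f$: for $\mu$, on the sign of that permutation on the squarefree locus (extended by $0$); for $\Lambda$, on whether the permutation is an $n$-cycle, up to a contribution supported on the non-squarefree locus (using $p>n$ when $m>2$). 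As in the proof of the prime polynomial theorem above and in Proposition~\ref{prop:m=1}, this expresses the moment in question, for $\varphi\in\{\Lambda-1,\ \mu\}$, in the form
\[
\kappa\cdot q^{\,h+1-n}\sum_{\vec\sigma\in S_n^m} c_{\vec\sigma}\,N(\vec\sigma;q)+(\text{non-reduced correction}),
\]
where $\kappa$ is an explicit monomial in $q$ accounting for root orderings and for the passage between $X_{m,n,h}$ and the space of root configurations (carried out in \S\ref{section:arithmetic}); where $c_{\vec\sigma}=\prod_{i=1}^{m}c_{\sigma_i}$, with $c_\sigma$ depending only on $m$, $n$, and $\varphi$; and where $N(\vec\sigma;q)=\#\{x\in X_{m,n,h}(\overline{\F}_q):\Frob_q(x)=\vec\sigma\cdot x\}$. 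Since $X_{m,n,h}$ is projective, $H^r_c=H^r$ and the trace formula reads $N(\vec\sigma;q)=\sum_{r\ge0}(-1)^r\Tr(\vec\sigma\circ\Frob_q\mid H^r(X_{m,n,h},\Q_\ell))$.

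I would then split the sum over $r$ at $r_0:=2\dim X_{m,n,h}-2h-2$. For $r>r_0$, Theorem~\ref{thm:intro} gives $H^r=\Q_\ell(-r/2)$ when $r$ is even and $0$ when $r$ is odd, with $S_n^m$ acting trivially; hence $\Tr(\vec\sigma\circ\Frob_q\mid H^r)=q^{r/2}$ for $r$ even and $0$ otherwise, \emph{independently of $\vec\sigma$}, so the total contribution of these $2h+2$ degrees to $\sum_{\vec\sigma}c_{\vec\sigma}N(\vec\sigma;q)$ equals $\bigl(\sum_{\vec\sigma}c_{\vec\sigma}\bigr)\sum_{j=0}^{h}q^{\dim X_{m,n,h}-j}$. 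For $r\le r_0$, Deligne's weight bound---valid for the proper (albeit singular) variety $X_{m,n,h}$, and not disturbed by the finite-order automorphism $\vec\sigma$ since $(\vec\sigma\circ\Frob_q)^N=\Frob_{q^N}$ for $N=\operatorname{ord}(\vec\sigma)$---gives $|\Tr(\vec\sigma\circ\Frob_q\mid H^r)|\le(\dim H^r)\,q^{r/2}$, while $\sum_r\dim H^r(X_{m,n,h},\Q_\ell)$ is bounded purely in terms of $m,n,h$, uniformly in $p$ and $\ell$, because $X_{m,n,h}$ is a complete intersection of fixed multidegree in a fixed projective space (e.g.\ by Katz's bounds on sums of Betti numbers). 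Hence the degrees $r\le r_0$ contribute $\bigO_{m,n,h}(q^{r_0/2})=\bigO_{m,n,h}(q^{\dim X_{m,n,h}-h-1})$ in total to $N(\vec\sigma;q)$, uniformly in $\vec\sigma$.

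The crux is that $\sum_{\vec\sigma}c_{\vec\sigma}=\prod_{i=1}^{m}\bigl(\sum_{\sigma\in S_n}c_\sigma\bigr)$ vanishes for both choices of $\varphi$. For $\varphi=\mu$ the relevant class function on $S_n$ is the sign character (up to a global sign) and $\sum_{\sigma\in S_n}\operatorname{sgn}(\sigma)=0$; for $\varphi=\Lambda-1$ it is $n\cdot\mathbf{1}[\sigma\text{ is an }n\text{-cycle}]-1$ and $n\cdot(n-1)!-n!=0$---both being incarnations of the vanishing $\sum_{f\in M_n(\F_q)}\varphi(f)=0$ reproven above. Thus the top-degree contribution disappears, $\sum_{\vec\sigma}c_{\vec\sigma}N(\vec\sigma;q)=\bigO_{m,n,h}(q^{\dim X_{m,n,h}-h-1})$, and restoring the prefactor $\kappa\,q^{h+1-n}$ (a short computation with exponents, in which $\kappa$ contributes one factor of $q$) yields $\bigO_{m,n,h}(q^{\dim X_{m,n,h}-n+1})=\bigO_{m,n,h}(q^{(m-1)(h+1)})$, the asserted bound. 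For $\varphi=\mu$ there is no non-reduced correction, since $\mu$ vanishes on the non-squarefree locus, so this finishes that case; for $m=2$ the same bookkeeping recovers the order of magnitude $q^{h+1}$ of Keating--Rudnick--Rodgers, with leading coefficient $\dim\Gr_W^{2n-2}H^{2n-2}(X_{2,n,h},\Q_\ell)$ (cf.\ Theorem~\ref{thm:intro2}).

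I expect the non-reduced correction for $\varphi=\Lambda-1$ to be the main obstacle: one must bound the contribution of $m$-tuples in which some $f_i$ has a repeated root, where both $\Lambda$ and the constant $1$ contribute. One approach is to stratify the non-squarefree locus of $X_{m,n,h}$ by which $f_i$ are non-squarefree and by their multiplicity profiles; each stratum is, up to a finite surjection, a product of a variety $X_{m',n',h}$ with $n'<n$ and symmetric powers of $\aff^1$, so the same argument (or cruder bounds, when the hypotheses of Theorem~\ref{thm:intro} fail for the smaller parameters) applies recursively, the leading-term cancellation on each stratum being the prime polynomial theorem in degree $n'$. A cleaner route for this part is to reduce to $m=2$: the bound $\bigl|\sum_{\deg g\le h}[\Lambda(f+g)-1]\bigr|\le(n-1)\,q^{h+1}$ gives
\[
\biggl|\frac{1}{q^n}\sum_{f\in M_n(\F_q)}\Bigl(\sum_{\deg g\le h}[\Lambda(f+g)-1]\Bigr)^{\!m}\biggr|\le\bigl((n-1)\,q^{h+1}\bigr)^{m-2}\,\frac{1}{q^n}\sum_{f\in M_n(\F_q)}\Bigl(\sum_{\deg g\le h}[\Lambda(f+g)-1]\Bigr)^{\!2},
\]
so it suffices to establish the $m=2$ bound, for which the non-reduced locus is of manifestly lower order (and is in any case subsumed by Theorem~\ref{thm:rodgers}). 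Either way, the one remaining point requiring care is that all implied constants stay uniform in $p$, which rests on the uniform Betti-number bounds and, for the stratification approach, on a uniform bound for the number and dimensions of the strata.
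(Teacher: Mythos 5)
Your main-line argument is, in essence, the paper's own proof: Lemma \ref{lem:pt-count} is exactly your twisted point count (Grothendieck--Lefschetz plus Theorem \ref{thm:intro} in the top $2h+2$ degrees, with Deligne's weight bound and characteristic-independent Betti bounds handling $r \le 2\dim X_{m,n,h} - 2h - 2$), and Theorem \ref{thm:arith-bound} is your observation that the main term carries the factor $\prod_{i}\sum_{\sigma}c_{i,\sigma}$, which vanishes for $\Lambda - 1$ and for $\mu$. The genuine gap is precisely where you locate it, and the paper's point is that it is not there at all: in the paper there is \emph{no} non-reduced correction, because $\Lambda$, the constant function $1$, and $\mu$ are each represented \emph{exactly} on all of $M_n(\F_q)$ (non-squarefree polynomials included) as sums of $\delta_\sigma$'s over ordered root tuples. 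Concretely, $f$ is a power of an irreducible if and only if $\Frob_q$ permutes some ordering of its roots, taken with multiplicity, by a fixed $n$-cycle, and the number of such orderings is exactly $\Lambda(f)$ (Corollary \ref{cor:von-mangoldt}); the constant $1$ is of von Mangoldt type by a stabilizer/coset count valid for arbitrary $f$ (Lemma \ref{lem:Mangoldt-type}); and for $\mu$ the signed sum over orderings vanishes on non-squarefree $f$ by pairing each permutation with its composite with a transposition of two equal roots (Corollary \ref{cor:mobius}). Note that your justification for $\mu$ (``there is no correction since $\mu$ vanishes on the non-squarefree locus'') is not by itself sufficient: what must vanish there is the $\delta_\sigma$-expression you are counting geometrically, not $\mu$, and that requires the pairing argument.

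Neither of your proposed repairs for the $\Lambda - 1$ correction works as stated. The claim that at $m=2$ the non-squarefree contribution is ``of manifestly lower order'' fails on the trivial count: there are about $q^{n-1}$ non-squarefree $f_1$ and $q^{h+1}$ choices of $f_2 \sim f_1$, and the discrepancy per pair is $\bigO_n(1)$, so after the prefactor $q^{h+1-n}$ the crude bound on the correction is $\bigO_n(q^{2h+1})$, which exceeds the target $q^{h+1}$ for every $h \ge 1$; genuine cancellation on the non-squarefree locus is needed, and that is exactly what the exact representation supplies. The stratification sketch is too vague to assess (the strata of the non-squarefree locus of $X_{m,n,h}$ are not simply products of smaller $X_{m',n',h}$'s with symmetric powers of $\aff^1$, and the claimed leading-term cancellation stratum-by-stratum is unsubstantiated). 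Finally, falling back on Theorem \ref{thm:rodgers} or Keating--Rudnick both abandons the geometric method that is the point of the theorem and fails to cover the stated range: those results require $h \le n-5$ (resp.\ $h \le n-4$), while the theorem allows $1 \le h \le n-3$. Your H\"older-type reduction to $m=2$ is fine (and the paper's introduction notes the $m>2$ bound follows from $m=2$), but it does not remove the need to treat the non-squarefree locus at $m=2$. Once you replace the ``class function on squarefree plus correction'' bookkeeping with the exact von-Mangoldt-type representation, your argument coincides with the paper's proof via Theorem \ref{thm:arith-bound} and Corollaries \ref{cor:von-mangoldt} and \ref{cor:mobius}.
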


The new contribution here is that these bounds have a purely geometric origin; conversely, Theorem \ref{thm:intro} does not follow from the above arithmetic bounds. For $m = 2$, this recovers the above theorems of Keating and Rudnick, except their explicit constants are replaced with an $\bigO(1)$. We also prove similar bounds for a larger class of arithmetic functions that includes many factorization functions in the sense of \cite[2.2]{Rodgers}. Note also the similarity of the formula for $\mu$ to the version of the Chowla conjecture for $\F_q[x]$ proved by Carmon and Rudnick:
\begin{thm*}[{\cite[Thm.\ 1.1]{CR}}]
  Let $\F_q$ be a finite field of odd characteristic. Fix $m > 1$ and $n > 1$. Let $a_2, \dots, a_m$ be distinct polynomials in $\F_q[x]$ of degree less than $n$. Then for any $e_1, \dots, e_m \in \set{1, 2}$, not all even,
  \[
  \bigg\lvert \frac{1}{q^n} \sum_{f \in M_n(\F_q)} \mu(f)^{e_1} \mu(f + a_2)^{e_2} \cdots \mu(f + a_m)^{e_m} \bigg\rvert \leq 2mnq^{-\frac{1}{2}} + 3mn^2 q^{-1}.
  \]
\end{thm*}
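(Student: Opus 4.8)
The plan is to convert the M\"obius correlation sum into a multiplicative character sum over affine space and then invoke Weil-type estimates; only the geometry of the discriminant hypersurface requires real work. First I would record a Stickelberger-type identity. For squarefree $f \in M_n(\F_q)$, track the action of $\Frob_q$ on the distinct roots of $f$ exactly as in the proof of the prime polynomial theorem above, obtaining a permutation $\sigma_f \in S_n$; the number $\omega(f)$ of irreducible factors of $f$ equals the number of cycles of $\sigma_f$, so $\mu(f) = (-1)^{\omega(f)} = (-1)^n \operatorname{sgn}(\sigma_f)$, and $\operatorname{sgn}(\sigma_f) = \chi(\operatorname{disc} f)$, where $\chi$ is the quadratic character of $\F_q^\times$ extended by $\chi(0) = 0$ --- the second equality because $\Frob_q$ acts on the square root $\prod_{i<j}(z_i - z_j)$ of $\operatorname{disc} f$ by the sign of $\sigma_f$, and char is odd. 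Since both sides vanish when $f$ is not squarefree, $\mu(f) = (-1)^n\chi(\operatorname{disc} f)$ for all $f \in M_n(\F_q)$. Now parametrize $M_n(\F_q)$ by $(c_0,\dots,c_{n-1}) \in \aff^n(\F_q)$ via $f_c = x^n + c_{n-1}x^{n-1} + \dots + c_0$, put $\Delta_i(c) = \operatorname{disc}(f_c + a_i)$ (a polynomial of degree $2n-2$ in $c$, where I set $a_1 = 0$ so that the hypotheses make $a_1,\dots,a_m$ pairwise distinct), and use that $\chi(x)^2 = \mathbf{1}[x \neq 0]$ to rewrite the correlation sum, up to a sign $(-1)^{n\sum e_i}$, as
\[
\sum_{c \in \aff^n(\F_q)} \chi\Bigl(\prod_{i:\, e_i \text{ odd}} \Delta_i(c)\Bigr) \prod_{j:\, e_j \text{ even}} \mathbf{1}[\Delta_j(c) \neq 0].
\]

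Next I would dispose of the squarefreeness indicators. Writing $\mathbf{1}[\Delta_j(c) \neq 0] = 1 - \mathbf{1}[\Delta_j(c) = 0]$ and expanding the product splits the sum into a main term $\sum_{c}\chi(D(c))$, where $D := \prod_{e_i \text{ odd}} \Delta_i$, together with fewer than $2^m$ terms of the form $\sum_{c \in V(\F_q)} \pm\chi(D(c))$ with $V \subsetneq \aff^n$ an intersection of some hypersurfaces $\{\Delta_j = 0\}$; each of the latter is $\bigO(q^{n-1}) = \bigO(q^{n-1/2})$ by the trivial bound, so it is harmless. Because not all $e_i$ are even, the product defining $D$ is nonempty, hence $D$ is nonconstant. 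The whole theorem thus reduces to the multivariable Weil estimate
\[
\Bigl\lvert \sum_{c \in \aff^n(\F_q)} \chi(D(c)) \Bigr\rvert = \bigO\bigl(q^{n-1/2}\bigr),
\]
valid with an implied constant of the claimed shape \emph{provided} $D$ is not a scalar multiple of the square of a polynomial. Granting that, slice $\aff^n$ by the $\approx q^{n-1}$ lines of a generic pencil: on each line $L$ with $D|_L$ nonzero and not a perfect square, the one-variable Weil bound gives $\lvert\sum_{t} \chi(D|_L(t))\rvert \leq (\deg D - 1)\sqrt q \leq (2n-2)m\sqrt q$, while the $\bigO(q^{n-2})$ remaining lines contribute $\bigO(q^{n-1})$ trivially; summing and dividing by $q^n$ yields a bound of the form $2mn\,q^{-1/2} + 3mn^2\,q^{-1}$. (Equivalently one may phrase the estimate via Deligne's bounds on the cohomology of the Kummer sheaf attached to $\chi\circ D$ on $\aff^n$, whose geometric irreducibility is equivalent to the same condition on $D$.)

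The heart of the matter --- and the step I expect to be the main obstacle --- is verifying that $D = \prod_{e_i \text{ odd}} \operatorname{disc}(f_c + a_i)$ is not a scalar times a square. For this I would use the classical fact that, in odd characteristic, the discriminant of the universal monic polynomial of degree $n \geq 2$ is an irreducible polynomial in the coefficients, cutting out the geometrically irreducible "repeated-root" hypersurface in $\aff^n$, which --- being irreducible of degree $\geq 2$ --- is invariant under no nonzero translation. Since $c \mapsto (\text{coefficients of } f_c + a_i)$ is translation of $\aff^n$ by the coefficient vector of $a_i$, each $\operatorname{disc}(f_c + a_i)$ is irreducible in $c$, and for the pairwise distinct $a_i$ these irreducible polynomials have pairwise distinct zero loci, hence are pairwise coprime. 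Therefore $D$ is a product of distinct irreducible polynomials of positive total degree: it is squarefree and not a scalar times a square, exactly the input the character-sum estimate needs. What remains is routine: bookkeeping the numerical constants in the one-variable Weil bound, and bounding the number of "bad" lines in the pencil (those on which $D$ restricts to zero or to a perfect square) in terms of $\deg D$ --- all of which is mechanical once the irreducibility of the discriminant and the coprimality of its translates are established.
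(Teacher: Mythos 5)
First, a point of comparison: the paper does not prove this statement at all --- it is quoted from Carmon--Rudnick \cite[Thm.\ 1.1]{CR} purely for context --- so the only meaningful benchmark is their proof. Your overall strategy (Pellet's identity $\mu(f) = (-1)^n \chi(\operatorname{disc} f)$ in odd characteristic, conversion to a quadratic-character sum over coefficient space, then one-variable Weil bounds on slices) is indeed their strategy. The difference is that they slice along the constant coefficient, where $\operatorname{disc}(f + a_i) = \pm \operatorname{Res}\bigl((f+a_i)', f+a_i\bigr)$ becomes an explicit polynomial of degree $\leq n-1$ in that coefficient whose roots are (negatives of) critical values, and the entire technical content of their paper is the effective count of bad slices --- those for which the product of discriminants is a constant times a square --- which is precisely where the $3mn^2 q^{-1}$ term comes from.

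Two steps in your sketch are genuine gaps. (1) Your coprimality argument rests on the principle that an irreducible hypersurface of degree $\geq 2$ is invariant under no nonzero translation. That principle is false in characteristic $p$: for instance $y^2 = x^p - x$ is irreducible and invariant under $x \mapsto x + 1$. So the claim that $\operatorname{disc}(f_c + a_i)$ and $\operatorname{disc}(f_c + a_j)$ are non-proportional for $a_i \neq a_j$ --- which you correctly identify as the heart of the matter --- is not established by what you wrote; it is true, but needs an argument specific to the discriminant (Carmon--Rudnick obtain the analogous statement by analyzing critical values, with extra care in degenerate characteristic-$p$ situations). (2) The slicing step is not mechanical. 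Knowing that $D$ is squarefree as a polynomial in $n$ variables does not by itself bound the number of $\F_q$-rational lines of a parallel pencil on which $D$ restricts to a square or to a constant: one must exhibit an $\F_q$-rational direction that is good and prove an effective $\bigO(q^{n-2})$ bound, with explicit constant, on the bad lines. If the direction is chosen badly, $D$ can restrict to a nonzero constant on \emph{every} line of the pencil, and each such line contributes $\pm q$, destroying the bound. This effective genericity is exactly the hard part of \cite{CR}, not an afterthought. Finally, your treatment of the even exponents by expanding $\prod_j (1 - \mathbf{1}[\Delta_j = 0])$ produces $2^m$ error terms, which cannot yield constants of the shape $2mn$ and $3mn^2$; a union bound fixes this, but as written the stated explicit inequality is not obtained.
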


Theorem \ref{thm:intro-arith} implies that as $q \to \infty$ with $n$ fixed, a stronger bound of $\bigO(q^{-h - 1})$ holds when we average over all the $q^{(m - 1) (h + 1)}$ choices of $a_2, \dots, a_m$ of degree $\leq h$.

\begin{remark}
  For $m \geq 3$, our bound on the $m$-th moment is weaker than the conjectural optimal bound (and can in fact be deduced from the $m = 2$ bound).\footnote{In contrast, the $m > 2$ case of Theorem \ref{thm:intro} itself --- i.e., the statement about cohomology --- does not follow formally from the $m = 2$ case.} For example, let us give the heuristic for the M\"obius function. We bound the expected value of $\psi_\mu(-, h)^m$, averaged over the $q^{n - h - 1}$ intervals in $M_n$, by
  \[
  \frac{1}{q^{n - h - 1}} \sum_{f_i \sim f_j} \mu(f_1) \cdots \mu(f_m) = \bigO(q^{(m - 1)(h + 1)}) = \bigO(H^{m - 1}).
  \]
  However, $\lvert\psi_\mu(-, h)\rvert$ should conjecturally be of order $H^{1/2}$ on average. This follows from the heuristic of randomly choosing the value of $\mu$ by a fair coin flip for each squarefree polynomial. Hence, we expect the actual order of growth of the expected value of $\lvert \psi_\mu(-, h) \rvert^m$ to be $H^{m/2} = q^{m (h + 1)/2}$, which is smaller than $H^{m - 1}$ for $m \geq 3$.

  This extra cancellation should come not from outright vanishing of the cohomology $H^r(X_{m, n, h}, \Q_\ell)$ in the appropriate degrees, but from the $S_n^m$-action on cohomology, which we expect is nontrivial and interesting in the range not covered by Proposition \ref{prop:sym-action}.
\end{remark}

\section{The geometry of short intervals}
\label{section:geometry}
We now study the geometry of a variety $X_{m, n, h}$ parametrizing (ordered) roots of tuples of polynomials $(f_1, \dots, f_m)$ with $f_i \in M_n$ and $\deg(f_i - f_j) \leq h$ for $i \neq j$. In this section, all varieties are over an algebraically closed field $k$.

\subsection{Definition of the variety}
\label{section:def-variety}
Let $e_n^j(x_1, \dots, x_n)$ be the elementary symmetric polynomial of degree $j$ in the $n$ variables $x_1, \dots, x_n$. Define $X_{m, n, h} \subset \proj^{mn - 1}$ in coordinates $z_{i,j}$ (with $1 \leq i \leq m$, $1 \leq j \leq n$) by the equations
\[
e_n^j(z_{1, 1}, \dots, z_{1, n}) = e_n^j(z_{i, 1}, \dots, z_{i, n})
\]
for all integers $2 \leq i \leq m$ and $1 \leq j \leq n - h - 1$. Thus, a point of $X_{m, n, h}$ is given by
\[
[z_{1, 1} : \dots : z_{1, n} : z_{2, 1} : \dots : z_{2, n} : \dots \dots : z_{m, 1} : \dots : z_{m, n}] \in \proj^{mn - 1}
\]
such that the polynomials in $x$ given by
\begin{equation}
  \label{eqn:eval}
  f_i(x) = (x - z_{i, 1}) (x - z_{i, 2}) \cdots (x - z_{i, n})
\end{equation}
satisfy $\deg(f_i - f_j) \leq h$.

The set $M_n^m$ of $m$-tuples of degree $n$ polynomials can be considered as an affine variety, isomorphic to $\aff^{mn}$. For $1 \leq i \leq m$ and $1 \leq j \leq n$, we consider the $(i, j)$-th coordinate $c_{i, j}$ of $(f_1, \dots, f_m) \in M_n^m$ to be the coefficient of the degree $n - j$ term of $f_i$.

Let $Y_{m, n, h}$ denote the affine cone of $X_{m, n, h}$. There is a map
\[
\pi: Y_{m, n, h} \to M_n^m
\]
sending $(z_{i, j})$ to $(f_1, \dots, f_m) \in M_n^m$ as in \eqref{eqn:eval} above. The image of this map is
\[
\set{(f_1, \dots, f_m) \in M_n^m: \deg(f_i - f_j) \leq h\ \forall i \neq j},
\]
which is isomorphic to an $n + (m - 1) (h + 1)$-plane in $M_n^m \cong \aff^{mn}$. The map $\pi$ is finite with fibers of cardinality $\leq n!$; the preimage of a point $(f_1, \dots, f_m) \in M_n^m$ consists of all permutations of the roots of the $f_i$.

In particular, $\pi$ factors through the quotient $Y_{m, n, h}/S_n^m$, where the $i$-th copy of $S_n$ acts by permuting $(z_{i, 1}, \dots, z_{i, n})$.

\begin{prop}
  $X_{m, n, h}$ is a complete intersection of dimension $n + (m - 1) (h + 1) - 1$.
\end{prop}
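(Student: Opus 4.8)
The plan is to exhibit $X_{m,n,h}$ as cut out in $\proj^{mn-1}$ by exactly $(m-1)(n-h-1)$ hypersurfaces and then show its dimension equals $mn-1$ minus that number, i.e. $mn - 1 - (m-1)(n-h-1) = n + (m-1)(h+1) - 1$; since this is the expected codimension, $X_{m,n,h}$ is automatically a (set-theoretic, and in fact scheme-theoretic) complete intersection. Concretely, the defining equations are $e_n^j(z_{1,\bullet}) - e_n^j(z_{i,\bullet}) = 0$ for $2 \le i \le m$ and $1 \le j \le n-h-1$, which is indeed $(m-1)(n-h-1)$ equations, each homogeneous of degree $j \le n-h-1$. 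So the only thing to prove is the dimension statement; the complete-intersection conclusion is then formal, because a closed subvariety of $\proj^N$ defined by $c$ equations always has every component of dimension $\ge N - c$, and if some component had dimension exactly $N-c$ on the nose while the whole thing had dimension $N-c$, the ideal is generated by a regular sequence up to radical — more carefully, equality of dimension with $N - c$ forces the $c$ equations to form a regular sequence in the local ring at a generic point, hence $X_{m,n,h}$ is a complete intersection of pure dimension $N-c$.

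The dimension count I would run through the affine cone $Y_{m,n,h}$, using the map $\pi\colon Y_{m,n,h} \to M_n^m \cong \aff^{mn}$ described in the excerpt. The image of $\pi$ is the linear subspace $L = \{(f_1,\dots,f_m) : \deg(f_i - f_j) \le h\}$, which is an affine subspace of dimension $n + (m-1)(h+1)$: the polynomial $f_1$ is free ($n$ coefficients), and each subsequent $f_i$ is determined by $f_1$ together with the $h+1$ coefficients of $f_i - f_1$ in degrees $0,\dots,h$. The map $\pi$ is finite and surjective onto $L$ with fibers of size between $1$ and $(n!)^m$ (the preimage of $(f_1,\dots,f_m)$ is the set of orderings of the roots of each $f_i$), hence $\dim Y_{m,n,h} = \dim L = n + (m-1)(h+1)$, and therefore $\dim X_{m,n,h} = \dim Y_{m,n,h} - 1 = n + (m-1)(h+1) - 1$ provided $Y_{m,n,h}$ is a cone not contained in a hyperplane through the origin, which is clear since it dominates all of $L$. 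Counting the codimension in $\proj^{mn-1}$: $(mn-1) - (n + (m-1)(h+1) - 1) = (m-1)(n - h - 1)$, exactly the number of defining equations, which is the complete-intersection assertion.

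The main obstacle is the finiteness (equivalently, quasi-finiteness plus properness) of $\pi$, or rather making sure the fiber dimension really is $0$ everywhere — equivalently, that $Y_{m,n,h}$ has no component collapsing to a lower-dimensional piece of $L$. Finiteness follows because the coordinates $z_{i,j}$ are integral over the coefficient ring: each $z_{i,j}$ satisfies the monic polynomial $f_i(z_{i,j}) = 0$ whose coefficients are (up to sign) the $c_{i,k}$, so $k[z_{i,j}]$ is a finite module over $k[c_{i,\bullet}]$, and pulling back along $L \hookrightarrow M_n^m$ keeps this integral. Hence every fiber of $\pi$ is finite, so $\dim Y_{m,n,h} = \dim \pi(Y_{m,n,h}) = \dim L$. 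One should also remark that $Y_{m,n,h}$ is irreducible — it is the image of the irreducible variety $\aff^n \times \aff^{(m-1)(h+1)}$ (coordinates: roots $z_{1,\bullet}$ of $f_1$, then low-degree coefficients of $f_i - f_1$, with the roots $z_{i,\bullet}$ obtained as an étale-locally-defined multivalued function, or simply because $Y_{m,n,h}$ is a component of the fiber product and the generic fiber is a torsor under $S_n^m$ acting transitively on orderings) — but irreducibility is not needed for the complete-intersection claim, only for the later refinements, so I would defer it. Thus the one genuinely load-bearing input is: \emph{the map $\pi$ is finite onto an affine subspace of the asserted dimension}, after which everything is bookkeeping.
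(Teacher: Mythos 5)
Your proposal is correct and follows essentially the same route as the paper: count the $(m-1)(n-h-1)$ defining equations, then compute $\dim X_{m,n,h}$ by passing to the affine cone $Y_{m,n,h}$ and using that $\pi\colon Y_{m,n,h} \to M_n^m$ is finite onto the $(n+(m-1)(h+1))$-dimensional plane of tuples with $\deg(f_i - f_j) \le h$. Your added details (integrality of the $z_{i,j}$ over the coefficients giving finiteness, and the formal step from ``dimension equals expected codimension'' to the complete-intersection conclusion) are just expansions of what the paper leaves implicit.
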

\begin{proof}
  Since $X_{m, n, h}$ is defined by $(m - 1) (n - h - 1) = mn - n - (m - 1) (h + 1)$ equations in $\proj^{mn - 1}$, it suffices to show that $X_{m, n, h}$ is of dimension $n + (m - 1) (h + 1) - 1$. Let $Y_{m, n, h} \subset \aff^{mn}$ be the affine cone of $X_{m, n, h}$. By the above discussion, $\pi: Y_{m, n, h} \to M_n^m$ is finite and has $n + (m - 1) (h + 1)$-dimensional image, so we are done.
\end{proof}

\subsection{The singular locus}
In this section, we compute the codimension of the singular locus of $X_{m, n, h}$ to be $2h + 3$. This is the key geometric result needed for our estimates. We will prove the result for $m = 2$, then use this to generalize to arbitrary $m$ when $\chr(k) > n$.

We begin with an elementary lemma on determinants of matrices that appear as blocks in the Jacobian of $X_{m, n, h}$.

\begin{lemma}
  \label{lem:det}
  Let $e_{r, j}^d$ be the elementary symmetric polynomial of degree $d$ in the $r - 1$ variables $x_1, \dots, x_r$, excluding $x_j$. For $s \leq r$, denote the $s \times r$ matrix
  \[
  A_r^s(x_1, \dots, x_r) := \begin{pmatrix}
    e_{r, 1}^0 & e_{r, 2}^0 & \cdots & e_{r, r}^0 \\
    e_{r, 1}^1 & e_{r, 2}^1 & \cdots & e_{r, r}^1 \\
    \vdots & \vdots & \ddots & \vdots \\
    e_{r, 1}^{s - 1} & e_{r, 2}^{s - 1} & \cdots & e_{r, r}^{s - 1}
  \end{pmatrix}.
  \]
  For $1 \leq i_1 < i_2 < \dots < i_s \leq r$, let $B_{i_1, \dots, i_r}$ be the $s \times s$ submatrix of $A_r^s$ comprised of columns $i_1, i_2, \dots, i_s$. Then
  \[
  \det B_{i_1, \dots, i_s}(x_1, \dots, x_r) = \prod_{1 \leq \alpha < \beta \leq s} (x_{i_\alpha} - x_{i_\beta}).
  \]
\end{lemma}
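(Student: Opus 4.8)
The plan is to recognize $\det B_{i_1,\dots,i_s}$ as a variant of the Vandermonde determinant. First I would observe that the entries $e_{r,j}^d$ are, up to sign, the coefficients of the polynomial $g_j(x) := \prod_{k \neq j}(x - x_k) = \sum_{d=0}^{r-1} (-1)^d e_{r,j}^d \, x^{r-1-d}$. Thus column $j$ of $A_r^s$ records (the first $s$ of) the coefficients of $g_j$. The key identity to exploit is the evaluation $g_j(x_j) = \prod_{k \neq j}(x_j - x_k)$ together with $g_j(x_k) = 0$ for $k \neq j$; this is exactly the Lagrange interpolation setup, and it is what will let us pin down the determinant without expanding it directly.

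The main step is the following: introduce the $r \times s$ Vandermonde-type matrix $V$ with $(k, d)$ entry $x_{i_k}^{\,s-1-d}$ for $1 \le k \le s$ (rows indexed by the chosen columns $i_1 < \dots < i_s$, and $d$ ranging $0,\dots,s-1$)—wait, more precisely I would set up the $s \times s$ product $B_{i_1,\dots,i_s} \cdot W$, where $W$ is an $r \times s$ matrix chosen so that the product is triangular. Concretely, take $W$ to have $(j, k)$ entry equal to a suitable monomial/coefficient so that the $(\ell, k)$ entry of $B \cdot W$ becomes $\sum_{t} (\text{coeff of } g_{i_t}) \cdot (\dots) = g_{i_t}$ evaluated appropriately. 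The cleanest incarnation: the entries of row $\ell$ of $B$ are the degree-$(r-1)$-through-degree-$(r-s)$ coefficients... I would instead argue by the standard fact that if $M$ is the $s\times r$ matrix of the top $s$ coefficients of the polynomials $g_{i_1},\dots,g_{i_s}$ and $N$ is the $r \times s$ matrix with $N_{j,k} = x_j^{\,?}$, then $(MN)_{\ell,k}$ telescopes to a value of $g_{i_\ell}$ at $x_{i_k}$ or its derivative-like analogue. Since $g_{i_\ell}(x_{i_k}) = 0$ unless $k = \ell$, the product $MN$ is upper (or lower) triangular with diagonal entries $g_{i_\ell}(x_{i_\ell}) = \prod_{k \neq i_\ell}(x_{i_\ell} - x_k)$, and $\det N$ is a Vandermonde in the $x_{i_k}$. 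Dividing out the Vandermonde factor and cancelling the contributions of the $x_k$ with $k \notin \{i_1,\dots,i_s\}$ leaves exactly $\prod_{\alpha < \beta}(x_{i_\alpha} - x_{i_\beta})$.

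Alternatively, and perhaps more robustly, I would prove the identity by a degree-and-divisibility argument: both sides are polynomials in $x_1,\dots,x_r$; the left side has degree at most $\binom{s}{2}$ in the $x_{i_\alpha}$'s (each row contributes degree $0,1,\dots,s-1$), it is alternating under swapping $x_{i_\alpha} \leftrightarrow x_{i_\beta}$ (swapping two columns of $B$), hence divisible by $\prod_{\alpha<\beta}(x_{i_\alpha}-x_{i_\beta})$, and it does not actually involve the variables $x_k$ for $k \notin \{i_1,\dots,i_s\}$ in the top-degree part—one checks that setting any such $x_k = x_{i_\alpha}$ makes two columns equal. Comparing leading coefficients (e.g. specializing or looking at the coefficient of a suitable monomial) fixes the constant to be $1$.

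The main obstacle I anticipate is bookkeeping the indexing carefully: the fact that $B$ uses only the \emph{first} $s$ rows of $A_r^s$ (degrees $0$ through $s-1$), not all $r$ rows, means the naive "multiply by Vandermonde to get the full coefficient vector" trick needs to be truncated correctly, and one has to verify the truncated product is still triangular with the claimed diagonal. Handling this truncation—equivalently, checking that the lower-degree coefficients we discard do not contribute—is the one place where a short but genuine computation is required; the divisibility-plus-leading-coefficient route sidesteps it at the cost of a slightly less transparent identification of the constant.
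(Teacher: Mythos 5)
Your second (``degree-and-divisibility'') route is essentially the paper's own proof: the determinant has total degree at most $0+1+\dots+(s-1)=\binom{s}{2}$ in all $r$ variables; setting $x_{i_\alpha}=x_{i_\beta}$ makes columns $i_\alpha$ and $i_\beta$ of $B$ equal, so the determinant is divisible by $\prod_{\alpha<\beta}(x_{i_\alpha}-x_{i_\beta})$; since that product already has degree $\binom{s}{2}$, the quotient is a constant $C$, which the paper pins down to $1$ by extracting the coefficient of $x_{i_2}x_{i_3}^2\cdots x_{i_s}^{s-1}$ via cofactor expansion. Two corrections to your write-up of this route. First, your side-claim that for $k\notin\{i_1,\dots,i_s\}$ setting $x_k=x_{i_\alpha}$ makes two columns of $B$ equal is false: the column that would coincide with column $i_\alpha$ is column $k$ of $A_r^s$, which is not among the columns selected in $B$. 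It is also unnecessary --- and would in fact be fatal if true, since extra linear factors together with the degree bound would force the determinant to vanish; the correct observation is simply that the \emph{total} degree bound (in all variables, not just the $x_{i_\alpha}$) forces the quotient to be a constant, so no dependence on the remaining variables can survive. Second, ``comparing leading coefficients'' is the one step that requires an actual computation, because every entry involves all the variables; you need to exhibit a specific monomial (the paper uses $x_{i_2}x_{i_3}^2\cdots x_{i_s}^{s-1}$) and check its coefficient on both sides, which is a short but genuine cofactor calculation rather than something to wave at. Your first route (reading the columns as coefficients of $g_j(x)=\prod_{k\neq j}(x-x_k)$ and multiplying by a Vandermonde-type matrix to triangularize) could be made to work, but as written it is not a proof: the truncation to the top $s$ coefficients is exactly the difficulty you acknowledge and do not resolve, so the claimed triangularity and diagonal entries are unverified; either carry that bookkeeping out in full or drop it in favor of the divisibility argument.
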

\begin{proof}
  Observe that $\det B_{i_1, \dots, i_s}(x_1, \dots, x_r)$ is a polynomial $D(x_1, \dots, x_r)$ in $x_1, \dots, x_r$ of degree
  \[
  \sum_{d=0}^{s} d = \frac{(s - 1) s}{2} = \binom{s}{2}.
  \]
  If $x_{i_\alpha} = x_{i_\beta}$ for some $1 \leq \alpha < \beta \leq s$, then the $i_\alpha$-th and $i_\beta$-th columns of $B_{i_1, \dots, i_s}$ are equal. Thus, the determinant is divisible by $x_{i_\alpha} - x_{i_\beta}$, and hence by $\prod_{\alpha < \beta} (x_{i_\alpha} - x_{i_\beta})$. But this is also of degree $\binom{s}{2}$, so
  \[
  D(x_1, \dots, x_r) = C \prod_{1 \leq \alpha < \beta \leq s} (x_{i_\alpha} - x_{i_\beta})
  \]
  for some constant $C$. Using the cofactor expansion, the $x_{i_2} x_{i_3}^2 x_{i_4}^3 \cdots x_{i_s}^{s - 1}$ term is
  \[
  \prod_{\alpha=1}^{s - 1} (-1)^{s - \alpha} x_{i_{\alpha + 1}} x_{i_{\alpha + 2}} \cdots x_{i_s} = (-1)^{\frac{(s - 1) s}{2}} x_{i_2} x_{i_3}^2 x_{i_4}^3 \cdots x_{i_s}^{s - 1}.
  \]
  This is the same as the $x_{i_2} x_{i_3}^2 \cdots x_{i_s}^{s - 1}$ term of $\prod_{\alpha < \beta} (x_{i_\alpha} - x_{i_\beta})$, so $C = 1$.
\end{proof}

\begin{remark}
  Note the similarity to the Vandermonde determinant. The above proof is adapted from a computation of the Vandermonde determinant in \cite{determinant-calculus}.
\end{remark}

\begin{cor}
  \label{cor:full-rank}
  For $s \leq r$, the matrix $A_r^s(x_1, \dots, x_r)$ defined in Lemma \ref{lem:det} has full rank if and only if $\#\set{x_1, \dots, x_r} \geq s$.
\end{cor}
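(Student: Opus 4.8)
The plan is to deduce this directly from Lemma \ref{lem:det} together with the standard fact that an $s \times r$ matrix with $s \leq r$ has full rank (that is, rank $s$) if and only if at least one of its $s \times s$ minors is nonzero.

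First I would note that the $s \times s$ minors of $A_r^s(x_1, \dots, x_r)$ are precisely the determinants $\det B_{i_1, \dots, i_s}$ indexed by $s$-element subsets $\{i_1 < i_2 < \dots < i_s\} \subseteq \{1, \dots, r\}$, and by Lemma \ref{lem:det} each such minor equals $\prod_{1 \leq \alpha < \beta \leq s}(x_{i_\alpha} - x_{i_\beta})$. This product is nonzero exactly when the entries $x_{i_1}, \dots, x_{i_s}$ are pairwise distinct.

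For the forward implication, suppose $\#\set{x_1, \dots, x_r} \geq s$. Then I can choose indices $i_1 < \dots < i_s$ for which the values $x_{i_1}, \dots, x_{i_s}$ are pairwise distinct; the corresponding $s \times s$ minor is then nonzero, so $A_r^s$ has full rank. For the converse, suppose $\#\set{x_1, \dots, x_r} < s$. Then for every $s$-element subset $\{i_1, \dots, i_s\}$, the pigeonhole principle forces $x_{i_\alpha} = x_{i_\beta}$ for some $\alpha < \beta$, so every $s \times s$ minor vanishes and $A_r^s$ does not have full rank.

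There is no substantial obstacle here: all the real content is contained in Lemma \ref{lem:det}, and what remains is the elementary minor criterion for the rank of a matrix together with a pigeonhole count.
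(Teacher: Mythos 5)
Your proof is correct and takes essentially the same route as the paper, which states Corollary \ref{cor:full-rank} without proof as an immediate consequence of Lemma \ref{lem:det}; your argument via the maximal-minor criterion for rank plus a pigeonhole count is precisely the implicit reasoning intended there.
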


Denote $A_i := A_{n}^{n - h - 1}(z_{i, 1}, \dots, z_{i, n})$. The Jacobian of $X_{m, n, h}$ is the block matrix
\[
J_{m, n, h} := \begin{pmatrix}
  A_1 & -A_2 & 0 & 0 & \cdots & 0 \\
  A_1 & 0 & -A_3 & 0 & \cdots & 0 \\
  A_1 & 0 & 0 & -A_4 & \cdots & 0 \\
  \vdots & \vdots & \vdots & \vdots & \ddots & \vdots \\
  A_1 & 0 & 0 & 0 & \cdots & -A_m
\end{pmatrix}.
\]

\begin{prop}
  \label{prop:codim-2}
  The singular locus of $X_{2, n, h}$ has codimension $2h + 3$.
\end{prop}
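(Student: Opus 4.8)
The plan is to analyze where the Jacobian $J_{2,n,h}$ drops rank. For $m=2$ this Jacobian is the single block matrix $\begin{pmatrix} A_1 & -A_2 \end{pmatrix}$, an $(n-h-1) \times 2n$ matrix whose entries depend on the two ordered tuples of roots $(z_{1,1},\dots,z_{1,n})$ and $(z_{2,1},\dots,z_{2,n})$. A point of $X_{2,n,h}$ is singular precisely when this matrix fails to have full rank $n-h-1$, i.e. when every $(n-h-1)\times(n-h-1)$ minor vanishes. By Corollary \ref{cor:full-rank}, the matrix $A_i = A_n^{n-h-1}(z_{i,1},\dots,z_{i,n})$ individually has full rank if and only if the tuple $(z_{i,1},\dots,z_{i,n})$ contains at least $n-h-1$ distinct values, equivalently $f_i$ has at most $h$ repeated roots (counted appropriately). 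So the first step is to observe that if either $A_1$ or $A_2$ alone has full rank, then $\begin{pmatrix} A_1 & -A_2 \end{pmatrix}$ has full rank, and the point is smooth. Hence the singular locus is contained in the locus where \emph{both} $f_1$ and $f_2$ are ``very non-squarefree'' — each having at most $n-h-2$ distinct roots.

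The second step is to refine this. Using Lemma \ref{lem:det}, each maximal minor of $A_i$ (choosing columns $i_1 < \dots < i_{n-h-1}$) equals $\prod_{\alpha<\beta}(z_{i,i_\alpha} - z_{i,i_\beta})$, so the maximal minors of the concatenated matrix are governed by the pattern of coincidences among the roots. I would stratify $X_{2,n,h}$ by the number of distinct roots of $f_1$ and of $f_2$, say $f_1$ has $a$ distinct roots and $f_2$ has $b$ distinct roots, with $a, b \le n-h-2$ on the singular locus. On such a stratum, the rank of $A_i$ is exactly $\min(a, n-h-1) = a$ (resp. $b$), and one needs to compute the rank of the combined $(n-h-1) \times 2n$ matrix: it is at most $a+b$ but could be less if the column spaces of $A_1$ and $A_2$ (inside the $(n-h-1)$-dimensional target) overlap. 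The key point is that the column space of $A_n^{n-h-1}$ evaluated at a tuple with distinct values $w_1,\dots,w_a$ is spanned by the ``evaluation-type'' vectors, and one shows that the combined rank is $\min(n-h-1, a+b)$ generically on the stratum — and that the rank-deficient locus within each stratum has high enough codimension. A careful bookkeeping of the dimension of each stratum (choosing which roots coincide, plus the $h+1$ parameters of freedom from $\deg(f_1-f_2)\le h$) against the loss of rank should yield that the worst case — giving codimension exactly $2h+3$ — occurs when $f_1$ and $f_2$ each have exactly one repeated root of multiplicity $2$ and are otherwise equal, or a comparably degenerate configuration.

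I would organize the dimension count as follows: parametrize the singular stratum by first choosing the polynomial $f_1$ with a prescribed degeneracy of its roots (this imposes codimension conditions in the space of $f_1$'s), then noting $f_2$ is constrained to lie in the $(h+1)$-dimensional coset $f_1 + \{g : \deg g \le h\}$ while also satisfying its own root-degeneracy and the rank-drop condition, then accounting for the finite fiber of $\pi$ (the $S_n \times S_n$ ordering of roots). Summing the codimension contributions and optimizing over the stratification, one finds the minimum codimension is $2h+3$. The main obstacle will be step two: controlling the rank of the \emph{concatenated} matrix $\begin{pmatrix} A_1 & -A_2 \end{pmatrix}$ on the deep strata, because this is not simply $\operatorname{rank} A_1 + \operatorname{rank} A_2$ — one must understand precisely how the column spaces interact when $f_1$ and $f_2$ share many roots (which they must, since $\deg(f_1 - f_2) \le h$ forces them to agree in their top $n-h-1$ symmetric functions). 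Establishing that this interaction never causes an unexpected extra rank drop, except on a locus of the claimed high codimension, is the crux of the argument; the determinantal identity of Lemma \ref{lem:det} is the main tool that makes it tractable.
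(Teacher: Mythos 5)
Your setup (analyze where $J_{2,n,h}=\begin{pmatrix}A_1 & -A_2\end{pmatrix}$ drops rank, using Lemma \ref{lem:det} and Corollary \ref{cor:full-rank}) matches the paper, and your first reduction is correct: the singular locus lies where both tuples $(z_{1,\bullet})$ and $(z_{2,\bullet})$ have at most $n-h-2$ distinct values. But the crux you explicitly defer --- how the column spaces of $A_1$ and $A_2$ interact --- is precisely the content of the proposition, and your sketch of it contains errors. The missing idea is this: using the defining equations of $X_{2,n,h}$, one can rewrite every entry of $A_i$ in terms of the \emph{common} elementary symmetric polynomials $e_n^d$ and the single variable $z_{i,j}$ of its column, so that the $j$-th column of $A_i$ depends only on the value $z_{i,j}$ (not on which polynomial it came from). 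Consequently, if $z_{1,j}=z_{2,j'}$ the corresponding columns of $A_1$ and $A_2$ are literally equal, and the concatenated matrix behaves exactly like a matrix of type $A_{2n}^{\,n-h-1}$ in the $2n$ variables $z_{1,1},\dots,z_{2,n}$: it has full rank if and only if the \emph{union} $\set{z_{1,1},\dots,z_{1,n},z_{2,1},\dots,z_{2,n}}$ contains at least $n-h-1$ distinct values. This single criterion replaces your stratification-and-bookkeeping plan: the locus in $\proj^{2n-1}$ where all $2n$ coordinates take at most $n-h-2$ distinct values has dimension $n-h-3$, giving codimension at least $2h+3$ directly, with no characteristic hypothesis and no appeal to a generic-rank claim on strata.

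Two concrete consequences of missing this criterion. First, your assertion that $\deg(f_1-f_2)\le h$ ``forces them to share many roots'' is false --- agreement of the top $n-h-1$ symmetric functions does not force common roots --- so your description of the interaction is not on solid ground. Second, your guess for the worst-case configuration (``$f_1$ and $f_2$ each have exactly one repeated root of multiplicity $2$ and are otherwise equal'') is actually a \emph{smooth} point: there the union of roots has $n-1\ge n-h-1$ distinct values, so the Jacobian has full rank. The true extremal locus requires the union of all roots to have at most $n-h-2$ distinct values; the paper exhibits it by taking $f_1=f_2$ with one root of multiplicity $h+3$ (the subvariety $S$ cut out by $z_{1,j}=z_{2,j}$ for all $j$ and $z_{1,n-h-2}=z_{1,j}$ for $j\ge n-h-1$), which has dimension at least $n-h-3$ and lies in the singular locus by Corollary \ref{cor:full-rank}, giving the matching lower bound that your proposal does not supply. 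Finally, note that if one instead pushes through your stratification by the separate counts $a$ and $b$ of distinct roots, the dimension count needs something like Lemma \ref{lem:sing-pts}, which requires $\chr(k)>n$; the paper's combined-matrix argument is what makes the $m=2$ case characteristic-free.
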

\begin{proof}
  For the upper bound on dimension, we study the Jacobian matrix of $X_{2, n, h}$ and prove a necessary condition for a point to lie in the singular locus. For the lower bound, we find a subvariety of the appropriate dimension contained in the singular locus.

  First, we show that the singular locus has dimension at most $n - h - 3$. Note that the Jacobian matrix $J_{2, n, h} = \begin{pmatrix}
    A_1 & -A_2
  \end{pmatrix}$ has full rank if and only if $\begin{pmatrix}
    A_1 & A_2
  \end{pmatrix}$ has full rank. For any $1 \leq j \leq n$ and $1 \leq d \leq n - h - 1$,
  \[
  e_{n, j}^d = e_n^d - x_j e_{n, j}^{d - 1} = e_n^d - x_j e_n^{d - 1} + x_j^2 e_{n, j}^{d - 2} = \dots = \sum_{c=0}^k (-x_j)^d e_n^{d - c}.
  \]
  Thus, we can rewrite $A_1$ and $A_2$ as
  \[
  A_i = \begin{pmatrix}
    1 & \cdots & 1 \\
    e_n^1 - z_{i, 1} & \cdots & e_n^1 - z_{i, n} \\
    e_n^2 - z_{i, 1} e_n^1 + z_{i, 1}^2 & \cdots & e_n^2 - z_{i, n} e_n^1 + z_{i, n}^2 \\
    \vdots & \ddots & \vdots \\
    \sum_{d=0}^{n - h - 2} (-z_{i, 1})^d e_n^{n - h - 2 - d} & \cdots & \sum_{d=0}^{n - h - 2} (-z_{i, n})^d e_n^{n - h - 2 - d}
  \end{pmatrix}.
  \]
  If $z_{i, j} = z_{i, j'}$ with $j \neq j'$, then columns $j$ and $j'$ of $A_i$ are equal. Moreover, by the defining equations of $X_{2, n, h}$, the elementary symmetric polynomials $e_n^i$ are the same for $A_1$ and $A_2$; thus, if $z_{1, j} = z_{2, j'}$ for any $j, j'$, then column $j$ of $A_1$ and column $j'$ of $A_2$ are equal.

  Hence, the same argument as for Lemma \ref{lem:det} shows that $J_{2, n, h}$ has less than full rank if and only if
  \[
  \#\set{z_{1, 1}, \dots, z_{1, n}, z_{2, 1}, \dots, z_{2, n}} \leq n - h - 2.
  \]
  The subvariety of $\proj^{2n - 1}$ defined by the above condition has dimension $n - h - 3$, so its intersection with $X_{2, n, h}$ has dimension $\leq n - h - 3$, i.e., codimension at least $2h + 3$.

  To show that $2h + 3$ is \emph{exactly} the codimension of the singular locus, let $S$ be the subvariety of $\proj^{2n - 1}$ defined by equations
  \begin{align*}
    z_{1, j} &= z_{2, j}, & 1 \leq j \leq n; \\
    z_{1, n - h - 2} &= z_{1, j}, & n - h - 1 \leq j \leq n.
  \end{align*}
  Note that $S \subseteq X_{2, n, h}$. Since this variety is defined by $n + h + 2$ equations, its dimension is at least
  \[
  2n - 1 - (n + h + 2) = n - h - 3,
  \]
  i.e., codimension at most $2h + 3$ in $X_{2, n, h}$. Moreover, at any point of $S$, we have $A_1 = A_2$ and $\#\set{z_{1, 1}, \dots, z_{1, n}} \leq n - h - 2$, so by Corollary \ref{cor:full-rank}, the first $n - h - 1$ rows of $J$ are linearly dependent. Thus, $S$ is contained in the singular locus of $X_{2, n, h}$.
\end{proof}

To generalize to arbitrary $m \geq 2$, we will need the following lemma, which intuitively says that conditions on the higher-degree coefficients of a polynomial are algebraically independent of conditions on multiplicity of the roots.

\begin{lemma}
  \label{lem:sing-pts}
  Suppose $\chr(k) > n$, and fix $1 \leq s, t \leq n$ and $c_1, c_2, \dots, c_t \in k$. Let $Z \subseteq \aff^n$ be the set of all $(w_1, \dots, w_n) \in \aff^n$ such that $\#\set{w_1, \dots, w_n} \leq s$ and $e_n^j(w_1, \dots, w_n) = c_j$ for each $1 \leq j \leq t$. If $Z$ is non-empty, then $\dim(Z) = \max \set{s - t, 0}$.
\end{lemma}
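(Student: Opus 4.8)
The plan is to decompose the locus of tuples with at most $s$ distinct entries into affine spaces and, on each piece, to reduce the problem to a finiteness statement for a weighted power-sum map. Write $V_s \subseteq \aff^n$ for the closed set $\{(w_1,\dots,w_n) : \#\{w_1,\dots,w_n\} \le s\}$, and for each surjection $\phi\colon\{1,\dots,n\}\to\{1,\dots,s\}$ let $W_\phi := \{w : w_i = w_j \text{ whenever } \phi(i)=\phi(j)\}$, a linear subspace of $\aff^n$ isomorphic to $\aff^s$. Since $s \le n$, one checks directly that $V_s = \bigcup_\phi W_\phi$, a union over finitely many $\phi$. As $Z = V_s \cap \bigcap_{j=1}^t \{e_n^j = c_j\}$, it suffices to bound $\dim(Z \cap W_\phi)$ for each $\phi$. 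The lower bound is then immediate and independent of the characteristic: if $Z \ne \emptyset$ it meets some $W_\phi$, and $Z\cap W_\phi$ is cut out in $W_\phi \cong \aff^s$ by the $t$ equations $e_n^j = c_j$, so by Krull's height theorem each of its components has dimension at least $s - t$; together with $\dim Z \ge 0$ this gives $\dim Z \ge \max\{s-t,0\}$.

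For the upper bound, fix $\phi$ with block sizes $m_1,\dots,m_s$ (so $m_i \ge 1$ and $\sum_i m_i = n$), and coordinatize $W_\phi\cong\aff^s$ by $a=(a_1,\dots,a_s)$, the value of $w$ on the $i$-th block. Restricted to $W_\phi$, the function $e_n^j$ is the $j$-th elementary symmetric function of the multiset in which each $a_i$ occurs with multiplicity $m_i$. Because $\chr k > n \ge t$, Newton's identities express the first $t$ elementary symmetric functions of this multiset as polynomials in its first $t$ power sums $p_\ell(a) := \sum_{i=1}^s m_i a_i^\ell$, and conversely; this is a polynomial change of coordinates on the target in both directions, so it preserves dimensions of fibres. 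Hence $Z\cap W_\phi$ is isomorphic to a fibre $\Psi_t^{-1}(c')$ of the morphism $\Psi_t\colon\aff^s\to\aff^t$, $a\mapsto(p_1(a),\dots,p_t(a))$, for a suitable point $c'$.

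The crux of the argument --- and the step I expect to be the main obstacle --- is to prove that $\Psi_s\colon\aff^s\to\aff^s$, $a\mapsto(p_1(a),\dots,p_s(a))$, is a finite morphism. Since each $p_\ell$ is homogeneous of degree $\ell$, it suffices to show that $p_1,\dots,p_s$ have the origin as their only common zero: this exhibits them as a homogeneous system of parameters for $k[a_1,\dots,a_s]$, hence makes $k[a_1,\dots,a_s]$ module-finite over $k[p_1,\dots,p_s]$. Suppose $\sum_{i=1}^s m_i a_i^\ell = 0$ for $\ell = 1,\dots,s$; grouping equal coordinates, let $b_1,\dots,b_r$ be the distinct values among $a_1,\dots,a_s$ (so $r \le s$) and $M_j := \sum_{i\,:\,a_i = b_j} m_i$, so that $1 \le M_j \le n$ and hence $M_j \ne 0$ in $k$. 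For $\ell = 1,\dots,r$ the equations read $\sum_{j=1}^r (M_j b_j)\, b_j^{\ell-1} = 0$, a linear system in the quantities $M_j b_j$ whose coefficient matrix $(b_j^{\ell-1})_{\ell,j}$ is an invertible Vandermonde matrix, since the $b_j$ are distinct; therefore $M_j b_j = 0$, and so $b_j = 0$, for every $j$, whence $r = 1$ and $a = 0$. This is precisely where $\chr k > n$ enters, through $M_j \ne 0$, and the lemma genuinely fails without it (already for $n = 4$, $s = 2$, $t = 1$, $\chr k = 2$, where $e_4^1$ vanishes identically on the slice with block sizes $(2,2)$).

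To conclude: if $t < s$ then $\Psi_t = \mathrm{pr}\circ\Psi_s$, where $\mathrm{pr}\colon\aff^s\to\aff^t$ is the projection onto the first $t$ coordinates, so $Z\cap W_\phi \cong \Psi_t^{-1}(c') = \Psi_s^{-1}\bigl(\mathrm{pr}^{-1}(c')\bigr)$; since $\mathrm{pr}^{-1}(c') \cong \aff^{s-t}$ and $\Psi_s$ is finite, $\dim(Z\cap W_\phi) \le s-t$. If $t \ge s$ then $\Psi_t^{-1}(c')$ is contained in $\Psi_s^{-1}$ of a single point, hence is finite. In all cases $\dim(Z\cap W_\phi) \le \max\{s-t,0\}$, and taking the union over the finitely many $\phi$ gives $\dim Z \le \max\{s-t,0\}$; combined with the lower bound, $\dim Z = \max\{s-t,0\}$. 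Apart from the finiteness of $\Psi_s$ and the correct bookkeeping of the characteristic hypothesis, the remaining ingredients --- the reduction to power sums via Newton's identities, the decomposition of $V_s$ into the $W_\phi$, and the closing dimension estimate for a finite morphism --- are all routine.
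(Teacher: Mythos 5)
Your proof is correct, and while it shares the paper's two main ingredients --- reduction from elementary symmetric functions to weighted power sums via Newton's identities (using $\chr(k) > n$), and a Vandermonde determinant as the ultimate source of nondegeneracy --- it reaches the dimension bound by a genuinely different mechanism. The paper stratifies $Z$ by the exact repetition pattern into locally closed pieces, maps each piece with finite fibers onto a variety of \emph{distinct} points $(u_1, \dots, u_r)$ cut out by weighted power-sum equations, and shows that variety is smooth of dimension $\max\set{r - t, 0}$ by computing that its Jacobian is a Vandermonde matrix scaled by invertible diagonals. You instead cover the locus $\#\set{w_1, \dots, w_n} \leq s$ by the finitely many \emph{closed} linear slices $W_\phi \cong \aff^s$ and prove a global statement on each slice: the weighted power sums $p_1, \dots, p_s$ vanish simultaneously only at the origin (your Vandermonde step, where $\chr(k) > n$ enters through $M_j \neq 0$), hence form a homogeneous system of parameters and make $\Psi_s$ a finite morphism, from which the upper bound $\dim(Z \cap W_\phi) \leq \max\set{s - t, 0}$ follows by pulling back $\mathrm{pr}^{-1}(c') \cong \aff^{s - t}$. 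Your approach buys a cleaner treatment of the boundary of the strata (no need to worry about points where some $u_i$ collide, since the slices are closed) and an explicit lower bound via Krull's height theorem, which the paper only gets implicitly from its stratum computation; the paper's approach buys slightly more information, namely smoothness and the exact pure dimension of each stratum of the repetition pattern, which your finiteness argument does not directly give. Both arguments use the characteristic hypothesis in the same essential way, and your counterexample in characteristic $2$ correctly illustrates why it cannot be dropped.
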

\begin{proof}
  Our strategy is to stratify $Z$ into finitely many pieces based on the pattern of repetitions among the $w_i$. We then show that each piece has Jacobian of full rank at each point, which implies each piece is nonsingular of dimension at most $\max \set{s - t, 0}$.

  Since $\chr(k) > n$, by Newton's identities, the system of equations
  \[
  e_n^j(w_1, \dots, w_n) = c_j
  \]
  is equivalent to
  \[
  w_1^j + w_2^j + \dots + w_n^j = d_j
  \]
  for each $1 \leq j \leq s$ and some constants $d_j$, where $d_j$ is a polynomial in $c_1, \dots, c_j$ with zero constant term.

  For integers $1 \leq r \leq s$ and $1 \leq \alpha_1, \dots, \alpha_r \leq n$ such that $\alpha_1 + \dots + \alpha_r = n$, let $W := W_{r; \alpha_1, \dots, \alpha_r}$ be the subvariety
  \[
  \set{(u_1, \dots, u_r) \in \aff^r \mathrel{\bigg{|}} \begin{aligned}
    \alpha_1 u_1^j + \dots + \alpha_r u_r^j = d_j && \forall 1 \leq j \leq t, \\
    u_i \neq u_{i'} && \forall i \neq i'
  \end{aligned}}.
  \]
  The tangent space to any point $(v_1, \dots, v_r) \in W$ is the kernel of the $r \times t$ Jacobian matrix $J_W(v_1, \dots, v_r)$, whose $(i, j)$-th entry is $\alpha_i j v_i^{j - 1}$. Note that $\alpha_i j \neq 0$ since $\chr(k) > n$, so $J_W(v_1, \dots, v_r) = PVQ$, where $P$ and $Q$ are diagonal invertible square matrices and the maximal upper-left submatrix of $V$ is a Vandermonde matrix, whose determinant vanishes if and only if $v_i = v_{i'}$ for some $i \neq i'$.

  By construction of $W$, we have $v_i \neq v_{i'}$ for all $i \neq i'$, so the Jacobian of $W$ has full rank $\min \set{r, t}$; hence, $\dim T_{(v_1, \dots, v_r)} W = r - \min \set{r, t} = \max \set{r - t, 0}$. This is true for every $(v_1, \dots, v_r) \in W$, so $W$ is nonsingular of pure dimension $\max \set{r - t, 0}$.

  For any $1 \leq i_1 < i_2 < \dots < i_r \leq n$ and $\alpha_1, \dots, \alpha_r$ as above, let
  \[
  Z_{r; \alpha_1, \dots, \alpha_r}^{i_1, \dots, i_r} := \set{(w_1, \dots, w_n) \in Z \mathrel{\bigg{|}} \begin{aligned}
    &\#\set{w_{i_1}, \dots, w_{i_r}} = \#\set{w_1, \dots, w_n} = r, \\
    &\#\set{j: w_j = w_{i_\nu}} = \alpha_\nu \quad \forall 1 \leq \nu \leq r
  \end{aligned}}.
  \]
  The map
  \begin{align*}
    Z_{r; \alpha_1, \dots, \alpha_r}^{i_1, \dots, i_r} &\to W_{r; \alpha_1, \dots, \alpha_r} \\
    (w_1, \dots, w_n) &\mapsto (w_{i_1}, \dots, w_{i_r})
  \end{align*}
  has finite fibers (because each $w_j$ with $j \notin \set{i_1, \dots, i_r}$ is equal to one of the $w_{i_\nu}$), so
  \[
  \dim Z_{r; \alpha_1, \dots, \alpha_r}^{i_1, \dots, i_r} = \dim W_{r; \alpha_1, \dots, \alpha_r} = \max \set{r - t, 0}.
  \]
  We can stratify $Z$ into a finite union
  \[
  Z = \bigcup_{\substack{1 \leq r \leq s \\ \alpha_1 + \dots + \alpha_r = n \\ 1 \leq i_1 < \dots < i_r \leq n}} Z_{r; \alpha_1, \dots, \alpha_r}^{i_1, \dots, i_r},
  \]
  so $\dim(Z) = \max(\dim Z_{r; \alpha_1, \dots, \alpha_r}^{i_1, \dots, i_r}) = \max \set{s - t, 0}$.
\end{proof}

\begin{thm}
  \label{thm:codim}
  Suppose $\chr(k) > n$. For $m \geq 2$, the singular locus of $X_{m, n, h}$ has codimension $2h + 3$.
\end{thm}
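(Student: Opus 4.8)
The plan is to reduce the computation of the singular locus of $X_{m,n,h}$ for general $m$ to the two ingredients already in hand: the rank criterion for the block Jacobian $J_{m,n,h}$ (via Corollary \ref{cor:full-rank}), and the dimension count of Lemma \ref{lem:sing-pts}. First I would analyze when the block matrix $J_{m,n,h}$ fails to have full rank. Because of the arrow-shaped block structure (the first block column is $(A_1,\dots,A_1)^{\mathsf T}$ and the remaining blocks are $-A_i$ on the diagonal), row operations let one see that $J_{m,n,h}$ drops rank exactly when there is a nonzero covector annihilating all blocks simultaneously; concretely, I expect the criterion to be that $J_{m,n,h}$ has less than full rank if and only if at least one of the following holds: some $A_i$ individually drops rank, i.e.\ $\#\{z_{i,1},\dots,z_{i,n}\} \leq n-h-2$, or more generally the combined multiset of roots coming from the ``overlapping'' rows is too small. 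As in the proof of Proposition \ref{prop:codim-2}, after rewriting each $A_i$ in terms of the common elementary symmetric functions $e_n^d$ (which agree across all $i$ on $X_{m,n,h}$ by the defining equations, for $d \leq n-h-1$), equal coordinates within or across blocks produce equal columns, so the Lemma \ref{lem:det}/Corollary \ref{cor:full-rank} argument applies to the full-width matrix $(A_1 \mid A_2 \mid \dots \mid A_m)$ and shows: the singular point condition is equivalent to $\#\{z_{1,1},\dots,z_{1,n}\} \leq n-h-2$ for \emph{some} $i$, together with the constraint (already built into $X_{m,n,h}$) that all $f_i$ share their top $n-h-1$ power sums. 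I would state this as a lemma.

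Next I would bound the dimension of the singular locus from above. Stratify the singular locus according to which index $i_0$ realizes the collision $\#\{z_{i_0,j}\}_j \leq n-h-2$. On such a stratum, the coordinates $z_{i_0,\bullet}$ are constrained to lie in the set $Z$ of Lemma \ref{lem:sing-pts} with $s = n-h-2$ and $t = n-h-1$ (the first $n-h-1$ power sums are fixed by the values of the other blocks, which determine the common $e_n^d$): since $\chr k > n$, Lemma \ref{lem:sing-pts} gives $\dim Z = \max\{(n-h-2)-(n-h-1),0\} = 0$, so the $z_{i_0,\bullet}$ block contributes dimension $0$ once the common symmetric functions are fixed. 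The remaining $m-1$ blocks each contribute at most the dimension of the space of monic degree-$n$ polynomials agreeing with a fixed one in the top $n-h-1$ coefficients, namely $h+1$, except that one block is ``free'' to supply those symmetric functions. Carefully: on the affine cone $Y_{m,n,h}$, projecting to the polynomials $(f_1,\dots,f_m)$ lands in an $(n+(m-1)(h+1))$-plane, and imposing that $f_{i_0}$ lie in the image of $Z \to M_n$ cuts the $f_{i_0}$-coordinates down by $n-h-1$ (its first $n-h-1$ coefficients are then determined by $f_1$, and the rest are determined up to finitely many choices since $\dim Z = 0$), so the locus in $Y$ has dimension $\leq n + (m-1)(h+1) - (n-h-1) = (m-1)(h+1) + h + 1 - \big((m-1)(h+1) \text{ correction}\big)$; I should instead argue directly that it equals $n-h-2$ in the cone, hence $n-h-3$ projectively, matching codimension $2h+3$. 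The cleanest route is: the singular locus of $X_{m,n,h}$ maps finitely onto (a subvariety of) the singular locus cut out inside the parameter plane by requiring one $f_i$ to have $\leq n-h-2$ distinct roots and the fixed top coefficients, and Lemma \ref{lem:sing-pts} pins that down to dimension exactly $n-h-2$ in the cone. Then, as in Proposition \ref{prop:codim-2}, the lower bound comes from exhibiting the explicit subvariety $S$ defined by $z_{1,j} = z_{2,j} = \dots = z_{m,j}$ for all $j$ together with $z_{1,n-h-2} = z_{1,j}$ for $n-h-1 \leq j \leq n$; it lies in $X_{m,n,h}$, has codimension $\leq 2h+3$, and at each of its points all $A_i$ are equal with $\leq n-h-2$ distinct entries, so the first $n-h-1$ rows of $J_{m,n,h}$ are dependent and $S$ is contained in the singular locus.

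I expect the main obstacle to be the rank analysis of the block matrix $J_{m,n,h}$: verifying rigorously that the \emph{only} way it can drop rank is through one of the individual $A_i$ dropping rank (rather than through some more subtle cross-block linear dependence among the overlapping first block columns). The arrow structure makes this plausible — eliminating the repeated $A_1$ blocks via row subtraction reduces $J_{m,n,h}$ to a block-diagonal-like form $(A_1 \mid -A_2 \mid \dots \mid -A_m)$ up to a unimodular row transformation, whose kernel on the left is governed by simultaneous column dependencies — but writing this down carefully, and then correctly identifying the resulting condition on the multiset $\{z_{i,j}\}$ using that all blocks share the same $e_n^d$ for $d \leq n-h-1$, is where the real work lies. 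Once that criterion is in hand, the dimension bounds follow from Lemma \ref{lem:sing-pts} almost exactly as in the $m=2$ case, and the hypothesis $\chr k > n$ enters only through that lemma (and through Newton's identities, to pass between elementary symmetric functions and power sums).
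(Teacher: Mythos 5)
Your overall strategy (analyze the rank of $J_{m,n,h}$ via Corollary \ref{cor:full-rank}, use Lemma \ref{lem:sing-pts} for the dimension count, and exhibit an explicit subvariety for the lower bound) is the right one, but the execution has a genuine gap at the central point: the rank criterion for the block Jacobian. Writing a left-kernel vector of $J_{m,n,h}$ as a tuple $(v_2,\dots,v_m)$ of coefficient vectors on the $m-1$ row groups, the block-$\iota$ columns force $v_\iota^{T} A_\iota = 0$ for each $\iota \geq 2$, while the block-$1$ columns force $\bigl(\sum_\iota v_\iota\bigr)^{T} A_1 = 0$. Hence a single $A_i$ dropping rank is \emph{not} sufficient for $J_{m,n,h}$ to drop rank, and---this is the point your plan misses---any rank drop forces \emph{at least two} blocks to be degenerate: either two indices $\iota \neq \iota'$ with $v_\iota, v_{\iota'} \neq 0$, so that $\#\{z_{\iota,j}\}_j \leq n-h-2$ and $\#\{z_{\iota',j}\}_j \leq n-h-2$, or a single nonzero $v_\iota$, in which case $v_\iota^{T} A_1 = 0$ as well and block $1$ is also degenerate. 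Your criterion (``some one block has at most $n-h-2$ distinct coordinates'') is only a weak necessary condition, and the locus it cuts out in $Y_{m,n,h}$ has codimension only $h+2$: fixing the degenerate block in $T_{n-h-2}$ (dimension $n-h-2$) leaves each of the other $m-1$ blocks free in $Z_{\mathbf{w}}$, of dimension $h+1$. The missing $h+1$ needed to reach $2h+3$ comes precisely from the \emph{second} degenerate block, which is constrained to lie in $T_{n-h-2}\cap Z_{\mathbf{w}}$, a finite set by Lemma \ref{lem:sing-pts} with $s=n-h-2$, $t=n-h-1$. Relatedly, your assertion that the singular locus has dimension $n-h-2$ in the cone, hence $n-h-3$ projectively, ``matching codimension $2h+3$,'' is the $m=2$ answer only; for general $m$ the singular locus has projective dimension $n+(m-3)(h+1)-2$, and the correct count in the cone is $(n-h-2)+(m-2)(h+1)$.

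The lower bound also needs repair. Your subvariety equates \emph{all} blocks ($z_{1,j}=\dots=z_{m,j}$ for all $j$) and therefore has dimension only $n-h-3$, which for $m\geq 3$ is strictly smaller than $\dim X_{m,n,h}-(2h+3)$; it shows the singular locus is nonempty but not that its codimension is at most $2h+3$. Instead one should equate only the first two blocks and leave blocks $3,\dots,m$ constrained only by the defining equations $e_n^j(z_{1,\bullet})=e_n^j(z_{i,\bullet})$ for $j\leq n-h-1$, together with the collision conditions on block $1$. That subvariety lies in the singular locus (since $A_1=A_2$ has at most $n-h-2$ distinct entries, the first $n-h-1$ rows of $J_{m,n,h}$ are dependent by Corollary \ref{cor:full-rank}) and is cut out by $(m-2)(n-h-1)+n+(h+2)$ equations, hence has dimension at least $n+(m-3)(h+1)-2$, which is exactly $\dim X_{m,n,h}-(2h+3)$.
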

\begin{proof}
  Let $S$ be the subvariety of $\proj^{mn - 1}$ defined by equations
  \begin{align*}
    e_n^j(z_{1, 1}, \dots, z_{1, n}) &= e_n^j(z_{i, 1}, \dots, z_{i, n}), & 3 \leq i \leq m,\ 1 \leq j \leq n - h - 1; \\
    z_{1, j} &= z_{2, j}, & 1 \leq j \leq n; \\
    z_{1, n - h - 2} &= z_{1, j}, & n - h - 1 \leq j \leq n.
  \end{align*}
  Note that $S \subseteq X_{m, n, h}$. Since this variety is defined by $(m - 2) (n - h - 1) + n + (h + 2)$ equations, it has dimension at least
  \[
  mn - 1 - (m - 2) (n - h - 1) - n - (h + 2) = n + (m - 3) (h + 1) - 2.
  \]
  Moreover, at any point of $S$, $A_1 = A_2$ and $\#\set{z_{1, 1}, \dots, z_{1, n}} \leq n - h - 2$, so by Corollary \ref{cor:full-rank}, the first $n - h - 1$ rows of $J$ are linearly dependent. Thus, $S$ is contained in the singular locus of $X_{m, n, h}$, giving the desired lower bound on dimension.

  Now we show the upper bound. We do this by showing that any point in the singular locus must have a certain number of repeated roots: more precisely, $\# \set{z_{i, 1}, \dots, z_{i, n}} \leq n - h - 2$ for at least two $i$. The desired bound is then a consequence of Lemma \ref{lem:sing-pts}. Let
  \[
  T_{n - h - 2} := \set{(w_1, \dots, w_n) \in \aff^n: \#\set{w_1, \dots, w_n} \leq n - h - 2}.
  \]
  Let $Y_{m, n, h}$ be the affine cone of $X_{m, n, h}$, and for each $\iota = 1, \dots, m$, consider the projection
  \begin{align*}
    p_\iota: Y_{m, n, h} &\to \aff^n, \\
    (z_{i, j}) &\mapsto (z_{\iota, 1}, \dots, z_{\iota, n}).
  \end{align*}
  This map is surjective, and each fiber has dimension $(m - 1) (h + 1)$. In particular, for fixed $\mathbf{w} = (w_1, \dots, w_n) \in \aff^n$, let
  \[
  Z_{\mathbf{w}} := \set{\mathbf{x} = (x_1, \dots, x_n) \in \aff^n: e_n^j(\mathbf{x}) = e_n^j(\mathbf{w})\ \forall j \leq n - h - 1}.
  \]
  Then $\dim Z_{\mathbf{w}} = h + 1$, and
  \[
  p_\iota^{-1}(w_1, \dots, w_n) \cong Z_{\mathbf{w}}^{m - 1}.
  \]

  At any point $(z_{i, j}) \in \Sing(Y_{m, n, h})$, there is a linear dependence relation between the rows of $J := J_{m, n, h}$, involving the rows of $A_1$ and one or more $A_\iota$ with $\iota \geq 2$. For $2 \leq \iota \leq m$, let $S_\iota \subseteq \Sing(Y_{m, n, h})$ be the locus on which there is such a relation involving the rows of $A_\iota$. By Corollary \ref{cor:full-rank}, for all $(z_{i, j}) \in S_\iota$, $(z_{\iota, 1}, \dots, z_{\iota, n}) \in T_{n - h - 2}$. Moreover, if $(z_{i, j}) \in S_\iota \setminus \bigcup_{\iota' \neq \iota} S_{\iota'}$, then Corollary \ref{cor:full-rank} also implies that $(z_{1, 1}, \dots, z_{1, n}) \in T_{n - h - 2}$.

  For any $\mathbf{w} \in \aff^n$ and $1 \leq \ell \leq m$ with $\ell \neq \iota$, let
  \[
  S_{\iota, \ell, \mathbf{w}} := p_\iota^{-1}(w_1, \dots, w_n) \cap \set{(z_{i, j}) \in Y_{m, n, h}: (z_{\ell, 1}, \dots, z_{\ell, n}) \in T_{n - h - 2} \cap Z_{\mathbf{w}}}.
  \]
  By the previous paragraph, for each $(z_{i, j}) \in S_\iota$, we have $(z_{\iota, 1}, \dots, z_{\iota, n}) \in T_{n - h - 2}$ and $(z_{\ell, 1}, \dots, z_{\ell, n}) \in T_{n - h - 2}$ for some $\ell \neq \iota$. Thus,
  \begin{equation}
    \label{eqn:sing-decomp}
    S_\iota \subseteq \bigcup_{\mathbf{w} \in T_{n - h - 2}} \bigcup_{\substack{1 \leq \ell \leq m \\ \ell \neq \iota}} S_{\iota, \ell, \mathbf{w}}.
  \end{equation}
  Moreover, $S_{\iota, \ell, \mathbf{w}}$ is isomorphic to $(T_{n - h - 2} \cap Z_{\mathbf{w}}) \times Z_{\mathbf{w}}^{m - 2}$. By Lemma \ref{lem:sing-pts} with $s = n - h - 2$ and $t = n - h - 1$, for any $\mathbf{w} \in \aff^n$, $T_{n - h - 2} \cap Z_{\mathbf{w}}$ is finite. Hence,
  \[
  \dim S_{\iota, \ell, \mathbf{w}} = (m - 2) (h + 1).
  \]
  Combining this with \eqref{eqn:sing-decomp} and counting dimension, we obtain
  \[
  \dim S_\iota \leq \dim T_{n - h - 2} + \dim S_{\iota, \ell, \mathbf{w}} = (n - h - 2) + (m - 2) (h + 1).
  \]
  Thus, $\codim S_\iota \geq 2h + 3$. Since $\Sing(Y_{m, n, h}) = \bigcup_{\iota=2}^{m} S_\iota$, this completes the proof.
\end{proof}

\subsection{Irreducibility}
Finally, we prove $X_{m, n, h}$ is irreducible, which is essential for the cohomological results of the next section. As before, for each $\mathbf{w} = (w_1, \dots, w_n) \in \aff^n$, define
\[
Z_{\mathbf{w}} := \set{\mathbf{z} = (z_1, \dots, z_n) \in \aff^n: e_n^j(\mathbf{z}) = e_n^j(\mathbf{w}),\ j = 1, \dots, n - h - 1}.
\]

\begin{lemma}
  \label{lem:irred}
  For each $\mathbf{w} = (w_1, \dots, w_n) \in \aff^n$, the variety $Z_{\mathbf{w}}$ is irreducible.
\end{lemma}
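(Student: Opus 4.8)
My plan is to prove the stronger statement that $Z_{\mathbf{w}}$ is \emph{normal}, by checking Serre's conditions $R_1$ and $S_2$, and then to deduce irreducibility from the connectedness of a projective complete intersection. This uses the hypothesis $\chr k > n$ or $\chr k = 0$ already present in Lemma~\ref{lem:sing-pts}, and some such hypothesis is genuinely necessary: for $n = 4$, $h = 1$, $\mathbf{w} = \mathbf{0}$ and $\chr k = 2$, the quadratic form $e_4^2$ becomes a product of two linear forms over $\bar{\F}_2$ modulo $e_4^1$, so $Z_{\mathbf{0}}$ is a union of two hyperplanes.

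First I would pass to the projective closure $\bar Z_{\mathbf{w}} \subseteq \proj^n$, defined by the homogenized equations $e_n^j(\mathbf{z}) - c_j z_0^j = 0$ for $1 \le j \le n-h-1$, where $c_j := e_n^j(\mathbf{w})$. Both $Z_{\mathbf{w}}$ and $\bar Z_{\mathbf{w}}$ are complete intersections of dimension $h+1$: exhibiting $Z_{\mathbf{w}}$ as the preimage, under the finite surjection $\aff^n \to \aff^n$, $(z_i) \mapsto (e_n^1, \dots, e_n^n)$, of a linear subspace $\aff^{h+1} \subseteq \aff^n$ shows that the affine equations form a regular sequence, and the locus at infinity $\bar Z_{\mathbf{w}} \cap \{z_0 = 0\} = \{e_n^j(\mathbf{z}) = 0 : 1 \le j \le n-h-1\} \subseteq \proj^{n-1}$ has dimension only $h$. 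In particular both are Cohen--Macaulay, hence $S_2$, and $\bar Z_{\mathbf{w}}$, being a complete intersection of positive dimension in projective space, is connected.

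Next I would identify the singular locus. The Jacobian of the affine defining equations is precisely the matrix $A_n^{n-h-1}(z_1, \dots, z_n)$ of Lemma~\ref{lem:det} (its $(d,i)$ entry is $\partial e_n^d / \partial z_i = e_{n,i}^{d-1}$), so by Corollary~\ref{cor:full-rank} it has less than full rank exactly when $\#\{z_1, \dots, z_n\} \le n-h-2$; thus $\Sing(Z_{\mathbf{w}}) \subseteq Z_{\mathbf{w}} \cap \{\#\{z_i\} \le n-h-2\}$, which is finite by Lemma~\ref{lem:sing-pts} with $s = n-h-2$ and $t = n-h-1$. Along $B := \bar Z_{\mathbf{w}} \cap \{z_0 = 0\}$ the $z_0$-column of the Jacobian is $(-c_1, 0, \dots, 0)^{\top}$, while---using the identity $e_{n,i}^d = \sum_c (-z_i)^c e_n^{d-c}$ together with $e_n^1 = \dots = e_n^{n-h-1} = 0$ on $B$---the other $n$ columns reduce, up to signs, to the Vandermonde matrix in $z_1, \dots, z_n$ with row degrees $0, 1, \dots, n-h-2$; so again the rank drops only where $\#\{z_i\} \le n-h-2$, and $B \cap \{\#\{z_i\} \le n-h-2\}$ is finite by Lemma~\ref{lem:sing-pts} with all $c_j = 0$. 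Hence $\Sing(\bar Z_{\mathbf{w}})$ is finite, of codimension $h+1 \ge 2$ in $\bar Z_{\mathbf{w}}$.

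It then follows that $\bar Z_{\mathbf{w}}$ satisfies $R_1$ and $S_2$, hence is normal by Serre's criterion; a connected normal scheme is integral, so $\bar Z_{\mathbf{w}}$ is irreducible, and therefore so is its dense open subscheme $Z_{\mathbf{w}}$. I expect the only delicate point to be the singular-locus computation at infinity---verifying that the boundary Jacobian genuinely degenerates to a Vandermonde matrix, so that Lemma~\ref{lem:sing-pts} applies there too. (One could instead establish connectedness without $\bar Z_{\mathbf{w}}$ by showing that the Galois group of $(x-z_1)\cdots(x-z_n)$, viewed as a polynomial over $k(t_1, \dots, t_{h+1})$ with its bottom $h+1$ coefficients taken as indeterminates, is all of $S_n$; but that reduces to checking that a generic member of a one-parameter subfamily is a Morse polynomial, which is harder to carry out cleanly because $g$ is a fixed and possibly special polynomial.)
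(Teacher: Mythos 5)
Your proof is correct under your standing hypothesis $\chr(k) = 0$ or $\chr(k) > n$, and it takes a genuinely different route from the paper's. The paper identifies $Z_{\mathbf{w}}$ with the variety of ordered root $n$-tuples of the family $\mathcal{F} = x^n + \sum_{j \leq n-h-1} e_n^j(\mathbf{w}) x^{n-j} + \sum_{j \leq h} A_j x^j$ over the affine space of low-order coefficients $(A_0, \dots, A_h)$, and then invokes \cite[Lemma 3.2]{BBSR} --- essentially the Galois-theoretic route you sketch in your closing parenthetical, with the hard verification outsourced to that reference. (Strictly, irreducibility of the ordered-root variety requires the Galois group of $\mathcal{F}$ over $k(A_0, \dots, A_h)$ to be all of $S_n$, not merely that $\mathcal{F}$ be separable and irreducible; your characteristic-$2$ example makes this visible, since there $x^4 + A_1 x + A_0$ is separable and irreducible while $Z_{\mathbf{0}}$ is a union of two planes.) Your alternative --- show the homogenized complete intersection is $R_1$ via the Jacobian/Vandermonde computations (Lemma~\ref{lem:det}, Corollary~\ref{cor:full-rank}) together with Lemma~\ref{lem:sing-pts}, hence normal by Serre's criterion, hence irreducible by connectedness --- is self-contained, reuses machinery already in the paper, and gives the stronger conclusion that $\bar{Z}_{\mathbf{w}}$ is normal with finite singular locus; your boundary computation is right, and in fact the bad locus at infinity is empty, since a zero-dimensional cone is just the origin. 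What your route costs is characteristic coverage: Lemma~\ref{lem:sing-pts} forces $p > n$, whereas the lemma is stated in the paper with no characteristic restriction and is used for $m = 2$ in all characteristics in Theorem~\ref{thm:intro}, so substituting your argument would impose $p > n$ there as well, while the route through \cite{BBSR} is valid in characteristics your argument excludes. That said, your $n = 4$, $h = 1$, $p = 2$ example checks out (on $e_4^1 = 0$ the quadric $e_4^2$ splits into two conjugate linear forms over $\F_4$), so some restriction is genuinely unavoidable and the paper's unqualified statement implicitly inherits whatever hypotheses \cite[Lemma 3.2]{BBSR} carries --- a worthwhile observation in its own right.
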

\begin{proof}
  Let $\mathcal{F} = x^n + \sum_{j=1}^{n - h - 1} e_n^j(\mathbf{w}) x^{n - j} + \sum_{j=0}^{h} A_j x^j \in k[A_0, \dots, A_h, x]$. By \cite[Lemma 3.2]{BBSR}, $\mathcal{F}$ is separable in $x$ and irreducible in $k(A_0, \dots, A_h)[x]$. Thus,
  \[
  V = \set{(A_0, \dots, A_h, z_1, \dots, z_n) \in \aff^{h + 1} \times \aff^n: \prod_{i=1}^{n} (x - z_i) = \mathcal{F}(A_0, \dots, A_h, x)}
  \]
  is an irreducible subvariety of $\aff^{h + 1} \times \aff^n$. If $(A_0, \dots, A_h, z_1, \dots, z_n) \in V$, then for each $j \leq n - h - 1$, $e_n^j(z_1, \dots, z_n)$ is the degree $n - j$ coefficient of $\prod_{i=1}^{n} (x - z_i) = \mathcal{F}(A_0, \dots, A_h, x)$, namely $e_n^j(\mathbf{w})$. Hence the projection $\aff^{h + 1} \times \aff^n \to \aff^n$ restricts to a map $V \to Z_{\mathbf{w}}$. Conversely, the map
  \begin{align*}
    \aff^n &\to \aff^{h + 1} \times \aff^n \\
    \mathbf{z} = (z_1, \dots, z_n) &\mapsto (e_n^n(\mathbf{z}), e_n^{n - 1}(\mathbf{z}), \dots, e_n^{n - h}(\mathbf{z}), z_1, \dots, z_n)
  \end{align*}
  restricts to a map $Z_{\mathbf{w}} \to V$, and these two maps are inverse to each other, proving irreducibility of $Z_{\mathbf{w}}$.
\end{proof}

\begin{lemma}
  \label{lem:flatness}
  Let $f: Y \to S$ be a morphism of locally Noetherian schemes. Assume $S$ is regular, $Y$ is Cohen--Macaulay, and
  \[
  \dim_y Y = \dim_{f(y)} S + \dim_y f^{-1}(f(y))
  \]
  for all $y \in Y$. Then $f$ is flat.
\end{lemma}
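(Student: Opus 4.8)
The statement is the classical criterion usually called \emph{miracle flatness} (see, e.g., [EGA~IV, 6.1.5] or [Matsumura, \emph{Commutative Ring Theory}, Thm.~23.1]); the plan is to reproduce its proof. Since flatness of $f$ means exactly that $\mathcal{O}_{Y, y}$ is flat over $\mathcal{O}_{S, f(y)}$ for every $y \in Y$, I would first reduce to the following algebraic statement: if $\phi \colon A \to B$ is a local homomorphism of Noetherian local rings with $A$ regular, $B$ Cohen--Macaulay, and $\dim B = \dim A + \dim(B / \mathfrak{m}_A B)$ (the last summand being the local dimension of the fiber appearing in the hypothesis), then $B$ is flat over $A$. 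Then I would argue by induction on $d := \dim A$; the base case $d = 0$ is immediate, since a regular local ring of dimension $0$ is a field and every module over a field is flat.

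For the inductive step, with $d \geq 1$, I would choose $t \in \mathfrak{m}_A \setminus \mathfrak{m}_A^2$. Because $A$ is regular it is a domain, so $t$ is a nonzerodivisor on $A$, and $\bar{A} := A/tA$ is again regular local, of dimension $d - 1$; set $\bar{B} := B/tB = B \otimes_A \bar{A}$, and note that the closed fiber $\bar{B} / \mathfrak{m}_{\bar{A}} \bar{B}$ is canonically the same as $B / \mathfrak{m}_A B$. The crux of the proof --- and the step I expect to be the main obstacle --- is to show that $t$ is a nonzerodivisor on $B$. For this I would play two dimension bounds against each other. The inequality $\dim B' \leq \dim A' + \dim(\text{closed fiber})$, valid for any local homomorphism of Noetherian local rings, applied to $\bar{A} \to \bar{B}$ gives $\dim \bar{B} \leq (d - 1) + \dim(B / \mathfrak{m}_A B) = \dim B - 1$ by the hypothesis; and adjoining a single ring element can decrease dimension by at most one, so $\dim \bar{B} \geq \dim B - 1$. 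Hence $\dim \bar{B} = \dim B - 1$. Now $B$ is Cohen--Macaulay, hence unmixed: every associated prime $\mathfrak{p}$ of $B$ is a minimal prime with $\dim(B / \mathfrak{p}) = \dim B$. If $t$ were a zerodivisor it would lie in some such $\mathfrak{p}$, and then $B / \mathfrak{p}$ would be a quotient of $\bar{B}$ of dimension $\dim B > \dim \bar{B}$ --- a contradiction. So $t$ is a nonzerodivisor on $B$.

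Granting this, $\bar{B}$ is Cohen--Macaulay with $\dim \bar{B} = \dim B - 1 = \dim \bar{A} + \dim(\bar{B} / \mathfrak{m}_{\bar{A}} \bar{B})$, so the inductive hypothesis applies and shows $\bar{B}$ is flat over $\bar{A}$. To finish, I would use the ``one parameter'' form of the local criterion for flatness: since $t$ is a nonzerodivisor on both $A$ and on $B$, Rees's change-of-rings isomorphism identifies $\mathrm{Tor}_1^A(A / \mathfrak{m}_A, B)$ with $\mathrm{Tor}_1^{\bar{A}}(\bar{A} / \mathfrak{m}_{\bar{A}}, \bar{B})$, which vanishes because $\bar{B}$ is flat over $\bar{A}$; the local flatness criterion (applicable since $B$ is a finite module over the Noetherian local ring $B$) then yields that $B$ is flat over $A$, completing the induction. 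The remaining points are routine: that the fiber is genuinely preserved under reduction by $t$, and that a Cohen--Macaulay local ring is equidimensional and catenary so that the unmixedness used above holds; both are standard facts.
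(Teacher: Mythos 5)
Your proposal is correct, but it takes a different (much longer) route than the paper, whose entire proof is the one-line citation ``Immediate from [EGA~IV, Prop.~6.1.5]'' --- i.e.\ the authors simply invoke the classical miracle-flatness criterion, exactly the statement you set out to prove. What you supply is the standard textbook proof of that cited result (essentially Matsumura, Thm.~23.1): reduce to the local statement for $A \to B$ with $A$ regular, $B$ Cohen--Macaulay, and $\dim B = \dim A + \dim(B/\mathfrak{m}_A B)$; induct on $\dim A$; pick a regular parameter $t$; use the dimension inequality $\dim \bar B \leq \dim \bar A + \dim(B/\mathfrak{m}_A B)$ together with Krull's principal ideal theorem and the unmixedness of the Cohen--Macaulay ring $B$ to see $t$ is a nonzerodivisor on $B$; then conclude via the change-of-rings isomorphism $\mathrm{Tor}_1^A(k, B) \cong \mathrm{Tor}_1^{\bar A}(k, \bar B)$ and the local criterion of flatness. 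All of these steps are correctly executed (the Tor identification is legitimate because $t$ is regular on both $A$ and $B$, so the only nonvanishing $\mathrm{Tor}^A_q(\bar A, B)$ is in degree $0$). The one point you gloss, which the paper also leaves implicit in its citation, is the translation between the quantities $\dim_y Y$, $\dim_{f(y)} S$, $\dim_y f^{-1}(f(y))$ (local dimensions of the underlying spaces) appearing in the lemma and the Krull dimensions $\dim \mathcal{O}_{Y,y}$, $\dim \mathcal{O}_{S,f(y)}$, $\dim \mathcal{O}_{f^{-1}(f(y)),y}$ used in the local statement; these agree in the paper's application (equidimensional finite-type schemes over a field, where it suffices to check at closed points), but a careful write-up should note it. The trade-off is clear: the citation keeps the paper short, while your argument makes this step self-contained at the cost of reproving a standard result.
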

\begin{proof}
  Immediate from \cite[Prop.\ 6.1.5]{EGA-IV.2}.
\end{proof}

\begin{prop}
  $X_{m, n, h}$ is irreducible.
\end{prop}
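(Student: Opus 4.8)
The plan is to realize $X_{m,n,h}$ as (the projectivization of) the total space of a fibration over $\aff^n$ whose base is irreducible and whose fibers are all irreducible of constant dimension, and then invoke flatness plus a standard result that a flat morphism with irreducible base and irreducible equidimensional fibers has irreducible total space. Concretely, I would work with the affine cone $Y_{m,n,h}$ and the first-coordinate projection $p_1 : Y_{m,n,h} \to \aff^n$, $(z_{i,j}) \mapsto (z_{1,1},\dots,z_{1,n})$. As computed in the proof of Theorem~\ref{thm:codim}, the fiber over $\mathbf{w} \in \aff^n$ is isomorphic to $Z_{\mathbf{w}}^{m-1}$, which is irreducible by Lemma~\ref{lem:irred} (a finite product of irreducible varieties over an algebraically closed field is irreducible), and has constant dimension $(m-1)(h+1)$. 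Since $\aff^n$ is irreducible of dimension $n$, the total space $Y_{m,n,h}$ has dimension $n + (m-1)(h+1)$, matching the cone over the dimension computed in Proposition~\ref{prop:codim-2}'s analogue.

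The next step is to verify the hypotheses of Lemma~\ref{lem:flatness} for $f = p_1$. The base $S = \aff^n$ is regular. For the Cohen--Macaulay hypothesis on $Y_{m,n,h}$: the variety $X_{m,n,h}$ is a complete intersection in $\proj^{mn-1}$ (Proposition~\ref{prop:codim-2}, i.e. the preceding proposition), hence Cohen--Macaulay, and therefore so is its affine cone $Y_{m,n,h}$. The dimension equality $\dim_y Y = \dim_{f(y)} S + \dim_y f^{-1}(f(y))$ holds at every point because $\dim_{f(y)} S = n$ always, $\dim_y f^{-1}(f(y)) = (m-1)(h+1)$ always (the fibers are all isomorphic to $Z_{\mathbf{w}}^{m-1}$, and each $Z_{\mathbf{w}}$ has dimension exactly $h+1$ — this is the content of the computation in the proof of Theorem~\ref{thm:codim}, which uses that $Z_{\mathbf{w}}$ is cut out by $n-h-1$ equations from $\aff^n$ and is nonempty), and $\dim_y Y = n + (m-1)(h+1)$ since $Y_{m,n,h}$ is equidimensional (being a complete intersection cone). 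So $p_1$ is flat.

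With flatness in hand, I would conclude as follows. A flat morphism is open, so $p_1(Y_{m,n,h})$ is open in the irreducible $\aff^n$; since $p_1$ is also surjective (every $\mathbf{w}$ lifts, as $Z_{\mathbf{w}} \neq \emptyset$), and $Y_{m,n,h}$ is equidimensional with all fibers irreducible of the same dimension over the irreducible base, $Y_{m,n,h}$ is irreducible. (One clean way to package this: $Y_{m,n,h}$ has a unique irreducible component dominating $\aff^n$, namely the closure of $p_1^{-1}$ of the generic point, whose fiber over the generic point is the irreducible generic fiber; any other component would map to a proper closed subset of $\aff^n$, but flatness forces every component to dominate the base since the fibers are equidimensional of the expected dimension — more precisely, no component can be contained in a fiber because $\dim Y_{m,n,h} > \dim(\text{fiber})$, and a component mapping into a proper closed $T \subsetneq \aff^n$ would have dimension $\le \dim T + (m-1)(h+1) < n + (m-1)(h+1)$.) Finally, $X_{m,n,h}$ is irreducible because it is the projectivization of the irreducible cone $Y_{m,n,h}$ (equivalently, $Y_{m,n,h} \setminus \{0\}$ is irreducible and $X_{m,n,h}$ is its image under the $\mathbb{G}_m$-quotient $\aff^{mn}\setminus\{0\} \to \proj^{mn-1}$).

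The main obstacle is the bookkeeping needed to confirm that every fiber of $p_1$ really is isomorphic to $Z_{\mathbf{w}}^{m-1}$ with $\dim Z_{\mathbf{w}} = h+1$ for \emph{all} $\mathbf{w}$, not just generic $\mathbf{w}$, since the dimension equality in Lemma~\ref{lem:flatness} must hold pointwise; this is exactly where one uses that $\mathcal{F}$ in Lemma~\ref{lem:irred} stays irreducible (hence $Z_{\mathbf{w}}$ stays irreducible of the expected dimension) uniformly in the coefficients $e_n^j(\mathbf{w})$. Everything else is a formal consequence of flatness plus the complete-intersection (hence Cohen--Macaulay) property already established.
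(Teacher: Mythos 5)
Your proposal is correct and follows essentially the same route as the paper: pass to the affine cone, project to the first block of coordinates, note the fibers are $Z_{\mathbf{w}}^{m-1}$ (irreducible by Lemma~\ref{lem:irred}), deduce flatness from Lemma~\ref{lem:flatness} using the Cohen--Macaulay property of a complete intersection, and conclude via openness of flat morphisms together with the criterion that an open surjection onto an irreducible base with irreducible fibers has irreducible source. The only difference is expository: you spell out the pointwise dimension check and the cone-to-projective-variety step, which the paper leaves implicit.
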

\begin{proof}
  We again pass to the affine cone $Y := Y_{m, n, h}$. Consider the projection map
  \[
  \pi: (z_{i, j}) \mapsto (z_{1, 1}, \dots, z_{1, n}): Y \to \aff^n.
  \]
  This is surjective, and the fiber above a point $\mathbf{w} = (w_1, \dots, w_n) \in \aff^n$ is the direct product of $m - 1$ copies of $Z_{\mathbf{w}}$, which is irreducible by Lemma \ref{lem:irred}. Hence, by \cite[tag 004Z]{stacks}, if $\pi$ is open, then $Y$ is irreducible. By \cite[tag 01UA]{stacks}, it suffices to show that $\pi$ is flat. Since $Y$ is a complete intersection, $Y$ is Cohen--Macaulay, so flatness of $\pi$ follows from Lemma \ref{lem:flatness}.
\end{proof}

\section{Actions on cohomology}
\label{section:cohomology}
\subsection{Cohomology of complete intersections}
The results of the previous section on the dimension of $X_{m, n, h}$ and its singular locus determine the $\ell$-adic cohomology groups $H^r(X_{m, n, h}, \Q_\ell)$ in higher degrees. This is due to the following theorem of Ghorpade and Lachaud.

\begin{thm}[\cite{sing-var}, Prop.\ 3.3]
  \label{thm:comp-int-cohom}
  Let $k$ be an algebraically closed field, let $\ell \neq \chr(k)$ be prime, and let $X$ be a complete intersection in $\proj_k^N$ of dimension $d \geq 1$ with $\dim \Sing(X) \leq s$. Then, for $0 \leq r \leq d - 1$ or $d + s + 2 \leq r \leq 2d$,
  \[
  H^r(X, \Q_\ell) = \begin{cases}
    \Q_\ell(-r/2) & \text{if $r$ is even}, \\
    0 & \text{if $r$ is odd}.
  \end{cases}
  \]
  Furthermore, if $d + s + 1$ is even, then $H^{d + s + 1}(X, \Q_\ell)$ contains a subspace isomorphic to $\Q_\ell(-(d + s + 1)/2)$.
\end{thm}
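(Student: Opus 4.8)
To reprove this statement, the plan is to assemble it from three standard pieces: (i) a Lefschetz-type vanishing controlling $H^r(X,\Q_\ell)$ in low degrees, (ii) Poincar\'e duality on the smooth locus to transport that information to high degrees, and (iii) the powers of the hyperplane class, which pin down the answer and give the final assertion. Throughout I would write $c=N-d$ for the codimension, $\xi\in H^2(X,\Q_\ell)$ for the restriction of $c_1(\mathcal O_{\proj^N}(1))$, $\Sigma=\Sing(X)$ (so $\dim\Sigma\le s$ and in fact $s\le d-1$ since $\Sigma\subsetneq X$), $U=X\setminus\Sigma$, and $i\colon\Sigma\hookrightarrow X$, $j\colon U\hookrightarrow X$ for the inclusions.

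\emph{Low degrees.} Pick forms $f_1,\dots,f_c$ cutting out $X$ and set $X_i=V(f_1,\dots,f_i)$, a complete intersection of dimension $N-i$, so $X_0=\proj^N$ and $X_c=X$. Each $X_{i+1}$ is the zero locus of a section of the ample line bundle $\mathcal O_{X_i}(\deg f_{i+1})$, so the complement $U_i=X_i\setminus X_{i+1}$ is affine of dimension $N-i$. By Artin--Grothendieck vanishing $H^r_c(U_i,\Q_\ell)=0$ for $r<N-i$, and the localization sequence of the pair $(X_i,X_{i+1})$ then gives $H^r(X_i,\Q_\ell)\xrightarrow{\ \sim\ }H^r(X_{i+1},\Q_\ell)$ for $r\le\dim X_{i+1}-1$. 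Composing down the tower (each $X_{i+1}$ has dimension $\ge d$) yields $H^r(X,\Q_\ell)\cong H^r(\proj^N,\Q_\ell)\cong H^r(\proj^d,\Q_\ell)$ for $0\le r\le d-1$, which is the first range.

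\emph{High degrees.} Since $X$ is proper, the localization sequence for $\Sigma\hookrightarrow X\hookleftarrow U$ reads $\cdots\to H^r_c(U,\Q_\ell)\to H^r(X,\Q_\ell)\to H^r(\Sigma,\Q_\ell)\to\cdots$, and $H^r(\Sigma,\Q_\ell)=0$ for $r>2s$ forces $H^r_c(U,\Q_\ell)\cong H^r(X,\Q_\ell)$ as soon as $r\ge 2s+2$, in particular for $r\ge d+s+2$ since $d\ge s$. As $U$ is smooth of pure dimension $d$, Poincar\'e duality gives $H^r_c(U,\Q_\ell)\cong H^{2d-r}(U,\Q_\ell)^\vee(-d)$, with $2d-r\le d-s-2$ in our range. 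To identify $H^{2d-r}(U,\Q_\ell)$ I would use the other localization sequence $H^k_\Sigma(X,\Q_\ell)\to H^k(X,\Q_\ell)\to H^k(U,\Q_\ell)\to H^{k+1}_\Sigma(X,\Q_\ell)$ together with the semipurity statement $H^k_\Sigma(X,\Q_\ell)=0$ for $k<\codim_X\Sigma$, which should hold because a complete intersection is Cohen--Macaulay. Since $\codim_X\Sigma\ge d-s$ and both $k=2d-r$ and $k+1$ are $\le d-s-1$, this gives $H^{2d-r}(U,\Q_\ell)\cong H^{2d-r}(X,\Q_\ell)$, which by the low-degree case (note $2d-r\le d-1$) is $H^{2d-r}(\proj^d,\Q_\ell)$. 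Feeding this back through Poincar\'e duality on $\proj^d$ gives $H^r(X,\Q_\ell)\cong H^r(\proj^d,\Q_\ell)$ for $d+s+2\le r\le 2d$. In both ranges $H^r(\proj^d,\Q_\ell)$ is $\Q_\ell(-r/2)$ for $r$ even and $0$ for $r$ odd, as claimed.

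\emph{The hyperplane class, and the main obstacle.} For $0\le k\le d$ the class $\xi^k\in H^{2k}(X,\Q_\ell)$ is nonzero: $\xi^d$ has image $\deg X>0$ under the trace map $H^{2d}(X,\Q_\ell)\to\Q_\ell(-d)$, so every divisor $\xi^k$ is nonzero and spans a subspace $\cong\Q_\ell(-k)$. When $d+s+1$ is even, $k:=(d+s+1)/2$ satisfies $k\le d$ because $s\le d-1$, so $\Q_\ell\,\xi^k\cong\Q_\ell(-(d+s+1)/2)$ is the required subspace of $H^{d+s+1}(X,\Q_\ell)$. The substantive point—and the step I expect to be the main obstacle—is the semipurity input $H^k_\Sigma(X,\Q_\ell)=0$ for $k<\codim_X\Sigma$: for smooth $X$ this is cohomological purity, but here $X$ is highly singular along $\Sigma$, so one must genuinely use that the local rings of $X$ along $\Sigma$ are Cohen--Macaulay (a local-cohomology/depth argument), or invoke the corresponding statement from the literature. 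Everything else is bookkeeping with the two localization sequences, Poincar\'e duality, and careful tracking of which degrees fall into which range.
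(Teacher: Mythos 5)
The paper itself gives no argument for this statement --- it is quoted directly from Ghorpade--Lachaud \cite[Prop.\ 3.3]{sing-var} --- so your outline has to be judged on its own terms. Its architecture (weak Lefschetz in low degrees; duality on the smooth locus plus semipurity in high degrees; powers of the hyperplane class for the final clause) is the right one, and the $\xi^k$ argument at the end is fine. The gap is that the two vanishing statements carrying the proof are justified by appeals that fail precisely in the singular setting the theorem is about. First, Artin--Grothendieck vanishing bounds the \emph{ordinary} cohomology of an affine variety above its dimension; the statement you use, $H^r_c(U_i,\Q_\ell)=0$ for $r<\dim U_i$, is its Poincar\'e dual and so needs $U_i$ smooth --- but for $i\ge 1$ the intermediate complete intersections $X_i$, hence the affine opens $U_i$, are in general singular. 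For a general singular affine variety the dual statement is false: the union of two planes in $\aff^4$ meeting in a point is an affine surface with $H^1_c\neq 0$, and the same example has $H^1$ with supports in the vertex nonzero even though the codimension is $2$, so your semipurity claim also cannot be had for free. Second, that semipurity input $H^k_\Sigma(X,\Q_\ell)=0$ for $k<\codim_X\Sigma$, which you rightly identify as the crux, is not known to follow from Cohen--Macaulayness: CM is a coherent-algebra condition and does not control $\ell$-adic local cohomology, so the proposed ``local-cohomology/depth argument'' is not available off the shelf.

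Both gaps are repaired by a single input that your hypotheses do provide: a local complete intersection of pure dimension $d$ has $\Q_\ell[d]$ perverse (equivalently, rectified homological depth $d$; Hamm--L\^e in the complex case, with an \'etale analogue valid over any algebraically closed field, which matters here since $\chr(k)$ is arbitrary). Artin vanishing for perverse sheaves on the affine lci variety $U_i$ then gives $H^r_c(U_i,\Q_\ell)=0$ for $r<\dim U_i$, and since the extraordinary pullback along the closed immersion $\Sigma\inject X$ preserves ${}^pD^{\geq 0}$, its hypercohomology on $\Sigma$ vanishes in degrees $<-\dim\Sigma$, i.e.\ $H^k_\Sigma(X,\Q_\ell)=0$ for $k<d-\dim\Sigma$, which is exactly your semipurity. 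Alternatively, the low-degree range can be rerouted to avoid singular affines entirely: $\proj^N\setminus X$ is smooth and covered by the $N-d$ affine opens $\proj^N\setminus V(f_j)$, hence has cohomological dimension at most $2N-d-1$, and Poincar\'e duality there plus the localization sequence give $H^r(X,\Q_\ell)\cong H^r(\proj^N,\Q_\ell)$ for $r\le d-1$. With these repairs your plan does reprove the quoted result, essentially in the spirit of Ghorpade--Lachaud; as written, the two key vanishing claims are unsupported.
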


\begin{remark}
  This has a topological analogue, a ``partial Poincar\'e duality'' for singular spaces due to Kato \cite{Kato}.
\end{remark}

Specializing to $X = X_{m, n, h}$:
\begin{cor}
  \label{cor:cohom}
  Suppose $\chr(k) > n$ or $m = 2$. For $2n + 2 (m - 2) (h + 1) - 1 \leq r \leq 2n + 2 (m - 1) (h + 1) - 2$,
  \[
  H^r(X_{m, n, h}, \Q_\ell) = \begin{cases}
    \Q_\ell(-r/2) & \text{if $r$ is even}, \\
    0 & \text{if $r$ is odd}.
  \end{cases}
  \]
  Also, $H^{2n + 2 (m - 2) (h + 1) - 2}(X_{m, n, h}, \Q_\ell)$ contains a subspace isomorphic to
  \[
  \Q_{\ell}(-n - (m - 2) (h + 1) + 1).
  \]
\end{cor}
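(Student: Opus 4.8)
The plan is to simply feed the geometric facts established in \S\ref{section:geometry} into Theorem \ref{thm:comp-int-cohom}. Concretely, $X_{m,n,h}$ sits in $\proj_k^{mn-1}$ as a complete intersection (by the dimension computation in \S\ref{section:def-variety}) of dimension $d := n + (m-1)(h+1) - 1$, so Theorem \ref{thm:comp-int-cohom} applies with $N = mn-1$. For the bound on the singular locus I would invoke Proposition \ref{prop:codim-2} in the case $m = 2$ and Theorem \ref{thm:codim} in the case $\chr(k) > n$; this is exactly the dichotomy ``$\chr(k) > n$ or $m = 2$'' in the hypothesis of the corollary, and in either case $\codim \Sing(X_{m,n,h}) = 2h+3$, so
\[
s := \dim \Sing(X_{m,n,h}) = d - (2h+3) = n + (m-3)(h+1) - 2,
\]
where I have used $(m-1)(h+1) - (2h+3) = (m-3)(h+1) - 2$.

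It then remains to translate the range $d + s + 2 \leq r \leq 2d$ of Theorem \ref{thm:comp-int-cohom} into the range in the corollary. A short computation gives $d + s + 2 = 2n + 2(m-2)(h+1) - 1$ and $2d = 2n + 2(m-1)(h+1) - 2$, which are precisely the stated endpoints; on this range Theorem \ref{thm:comp-int-cohom} yields $H^r(X_{m,n,h}, \Q_\ell) = \Q_\ell(-r/2)$ for $r$ even and $0$ for $r$ odd. For the final assertion, note that $d + s + 1 = 2n + 2(m-2)(h+1) - 2$ is even, so the ``furthermore'' clause of Theorem \ref{thm:comp-int-cohom} applies and produces a subspace of $H^{d+s+1}(X_{m,n,h}, \Q_\ell) = H^{2n + 2(m-2)(h+1) - 2}(X_{m,n,h}, \Q_\ell)$ isomorphic to $\Q_\ell(-(d+s+1)/2) = \Q_\ell(-n - (m-2)(h+1) + 1)$.

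There is essentially no obstacle here beyond careful bookkeeping; the one point that warrants attention is honoring the case split, since the full-strength codimension statement (Theorem \ref{thm:codim}) is available only when $\chr(k) > n$, and for $m > 2$ one genuinely needs that hypothesis. One should also note in passing that the ``good'' range $d+s+2 \leq r \leq 2d$ is disjoint from the low-degree range $0 \leq r \leq d-1$ of Theorem \ref{thm:comp-int-cohom} (since $s \geq 0$, as $h \leq n-3$ forces $d \geq 2h+3$), so there is no overlap to reconcile.
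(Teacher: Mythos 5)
Your proposal is correct and follows essentially the same route as the paper: plug the dimension $d = n + (m-1)(h+1) - 1$ and the singular-locus codimension $2h+3$ (Proposition \ref{prop:codim-2} for $m=2$, Theorem \ref{thm:codim} for $\chr(k) > n$) into Theorem \ref{thm:comp-int-cohom} and check that $d + s + 2 = 2n + 2(m-2)(h+1) - 1$ and $2d = 2n + 2(m-1)(h+1) - 2$, with the ``furthermore'' clause giving the final subspace claim. Your explicit handling of the case split and the parity check at $d+s+1$ are exactly the bookkeeping the paper leaves implicit.
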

\begin{proof}
  Upon noting that
  \begin{align*}
    \dim X + \dim \Sing(X) + 2 &= 2 \dim(X) - (2h + 3) + 2 \\
    &= 2 (n + (m - 1) (h + 1) - 1) - 2 (h + 1) - 1 + 2 \\
    &= 2n + 2 (m - 2) (h + 1) - 1,
  \end{align*}
  this follows immediately from Theorems \ref{thm:codim} and \ref{thm:comp-int-cohom}.
\end{proof}

\subsection{The permutation action}
We will count points where the action of Frobenius induces various permutations of the coordinates. To do so, we show that $S_n^m$ acts trivially on cohomology in the degrees we have computed.

\begin{lemma}
  \label{lem:proj-sym-action}
  For all $N \geq 1$ and all $r \in \Z$, let $S_N$ act on $\proj^{N - 1}$ by permuting the coordinates. The action of $S_N$ on $H^r(\proj^{N - 1}, \Q_\ell)$ is trivial.
\end{lemma}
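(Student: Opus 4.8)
The plan is to use the fact that the cohomology of $\proj^{N-1}$ is spanned in each even degree $2i$ by the $i$-th power of the hyperplane class (and vanishes in odd degrees), and that the $S_N$-action, being induced by automorphisms of $\proj^{N-1}$, must preserve this class up to the only available ambiguity. More precisely, $H^\bullet(\proj^{N-1}, \Q_\ell)$ is a free $\Q_\ell$-module with basis $1, c, c^2, \dots, c^{N-1}$, where $c \in H^2(\proj^{N-1}, \Q_\ell) \cong \Q_\ell(-1)$ is the cycle class of a hyperplane. Since each $H^r$ is at most one-dimensional, any linear automorphism of $H^r$ is multiplication by a scalar, so it suffices to treat $H^2$ and then observe that the action on $H^{2i}$ is determined by functoriality (the action on $H^{2i}$ is the $i$-th power of the action on $H^2$, since cup product is $S_N$-equivariant and $c^i$ generates $H^{2i}$).

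First I would reduce to showing that $S_N$ acts trivially on $H^2(\proj^{N-1}, \Q_\ell)$. The permutation action of $S_N$ is by linear automorphisms of $\proj^{N-1}$, i.e., it factors through $\mathrm{PGL}_N$. The key point is that $\mathrm{PGL}_N$ is connected, so it acts trivially on all $\ell$-adic cohomology groups of $\proj^{N-1}$: a connected group scheme has no nontrivial finite quotients acting through it on a discrete object like $H^r(\proj^{N-1}, \Q_\ell)$, since the action would have to be locally constant in the group variable, hence constant on the connected group, hence equal to the identity. Therefore any subgroup of $\mathrm{PGL}_N$, in particular the image of $S_N$, acts trivially.

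Alternatively — and this avoids invoking connectedness of $\mathrm{PGL}_N$ directly — one can argue that the generator $c$ of $H^2$ is characterized up to sign and scaling as a generator of a free rank-one module, and is in fact pinned down exactly because it is the class of an ample divisor: any automorphism of $\proj^{N-1}$ sends a hyperplane to a hyperplane, hence sends $c$ to $c$ (the cycle class of any hyperplane equals $c$, since all hyperplanes are algebraically, indeed linearly, equivalent). Thus $S_N$ fixes $c$, hence fixes $c^i$ for all $i$, hence acts trivially on $H^{2i}(\proj^{N-1}, \Q_\ell)$ for all $i$; and $H^r = 0$ for $r$ odd, so the action there is trivially trivial. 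For $r < 0$ or $r > 2(N-1)$ the group vanishes as well.

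The main obstacle, such as it is, is purely expository: making sure the identification $H^2(\proj^{N-1},\Q_\ell) \cong \Q_\ell(-1)$ and the multiplicativity of the cohomology ring are cited or justified in the $\ell$-adic étale setting (these are standard, e.g.\ via the computation of $H^\bullet(\proj^{N-1}, \Q_\ell)$ by the projective bundle formula), and recording that the $S_N$-action on cohomology is the one functorially induced by the geometric action on $\proj^{N-1}$. Once the hyperplane class is seen to be $S_N$-invariant, the rest is formal from the ring structure. I expect this to be a short proof.
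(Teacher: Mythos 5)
Your proof is correct, but it takes a different route from the paper. The paper argues pointwise in $\sigma$: for even $r$ it chooses an $(r/2)$-dimensional linear subspace $P \subset \proj^{N-1}$ preserved by $\sigma$, uses the restriction isomorphism $H^r(\proj^{N-1}, \Q_\ell) \to H^r(P, \Q_\ell)$, and then Poincar\'e duality on $P$ to transport the action to $H^0(P, \Q_\ell)$, where it is visibly trivial. You instead exploit the ring structure: the hyperplane class $c$ is fixed because any linear automorphism carries hyperplanes to hyperplanes and all hyperplanes have the same cycle class, and since $1, c, \dots, c^{N-1}$ generate the cohomology and cup product is equivariant, triviality follows; your alternative via connectedness of $\mathrm{PGL}_N$ proves the stronger statement that \emph{every} projective linear automorphism acts trivially. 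Your hyperplane-class argument is fully rigorous given the standard facts you cite and is arguably the more standard proof; the $\mathrm{PGL}_N$ version is also a standard fact, though your justification (``locally constant in the group variable'') is informal in the \'etale setting and would need a reference or a short argument (e.g.\ that the action of $G(k)$ on $\ell$-adic cohomology factors through $\pi_0(G)$). The paper's choice of an invariant linear section and duality has the mild advantage of matching the Lefschetz-section machinery (semi-regular pairs, restriction isomorphisms from Ghorpade--Lachaud) used in Proposition \ref{prop:sym-action}, but nothing in the later arguments depends on which proof of the lemma one uses.
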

\begin{proof}
  For any $\sigma \in S_N$ and $r$ even with $0 \leq r \leq 2(N - 1)$, let $P$ be an $(r/2)$-dimensional hyperplane in $\proj^{N - 1}$ preserved by $\sigma$. This induces an isomorphism $H^r(\proj^{N - 1}, \Q_\ell) \to H^r(P, \Q_\ell)$, and $H^r(P, \Q_\ell)$ is Poincar\'e dual to $H^0(P, \Q_\ell)$, on which $\sigma$ acts trivially.
\end{proof}

\begin{prop}
  \label{prop:sym-action}
  Suppose $\chr(k) > n$ or $m = 2$. Let $S_n^m$ act on $X_{m, n, h}$ by the $i$-th copy of $S_n$ permuting $(z_{i, 1}, \dots, z_{i, n})$. For $r \geq 2n + 2 (m - 2)(h + 1) - 1$, the action of $S_n^m$ on $H^r(X_{m, n, h}, \Q_\ell)$ is trivial.
\end{prop}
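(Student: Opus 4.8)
The plan is to reduce the statement to the range already handled by Corollary~\ref{cor:cohom} together with Lemma~\ref{lem:proj-sym-action}, by exploiting the fact that in the top degrees the cohomology of $X_{m,n,h}$ is one-dimensional and can be computed via a hyperplane section. For $r$ odd in the stated range, $H^r(X_{m,n,h},\Q_\ell) = 0$ by Corollary~\ref{cor:cohom}, so there is nothing to prove. So assume $r$ is even with $2n + 2(m-2)(h+1) - 1 \leq r \leq 2\dim X_{m,n,h} = 2n + 2(m-1)(h+1) - 2$; note $r \leq 2\dim X$ since cohomology vanishes above that degree, so in fact $r$ ranges over the even integers in the window appearing in Corollary~\ref{cor:cohom}. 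There $H^r(X_{m,n,h},\Q_\ell) \cong \Q_\ell(-r/2)$ is one-dimensional, so any automorphism of $X_{m,n,h}$ acts on it by a scalar, and that scalar is a root of unity since $S_n^m$ is finite.

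The key step is then to identify this one-dimensional space compatibly with a $\sigma$-equivariant structure, for each $\sigma \in S_n^m$. The cleanest route is via the hard Lefschetz / weak Lefschetz comparison built into Theorem~\ref{thm:comp-int-cohom}: for a complete intersection $X \subset \proj^N$ of dimension $d$, in degrees $r$ with $d + s + 2 \leq r \leq 2d$ the restriction map $H^r(\proj^N,\Q_\ell) \to H^r(X,\Q_\ell)$ is an isomorphism (this is what underlies Ghorpade--Lachaud's proposition, since the complement has small cohomological dimension there). Taking an element $\sigma \in S_n^m$, which acts on $\proj^{mn-1}$ by permuting coordinates and preserves $X_{m,n,h}$, the restriction map is $\sigma$-equivariant; by Lemma~\ref{lem:proj-sym-action}, $\sigma$ acts trivially on $H^r(\proj^{mn-1},\Q_\ell)$, hence trivially on $H^r(X_{m,n,h},\Q_\ell)$. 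First I would state this equivariance of the restriction map explicitly, citing the proof of \cite[Prop.\ 3.3]{sing-var} to confirm that the isomorphism there is induced by the inclusion $X \hookrightarrow \proj^N$ (it is — it comes from the long exact sequence of the pair $(\proj^N, \proj^N \setminus X)$, all of whose maps are functorial for the ambient $\sigma$-action), and then the result falls out immediately.

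The main obstacle is the boundary degree $r = d + s + 1 = 2n + 2(m-2)(h+1) - 2$, where Theorem~\ref{thm:comp-int-cohom} only guarantees that $H^r(X,\Q_\ell)$ \emph{contains} a copy of $\Q_\ell(-r/2)$ rather than equalling it — here the restriction map from projective space need not be an isomorphism, so the above argument does not directly give triviality of the $S_n^m$-action on the whole group. There are two ways around this. The first is simply to note that the statement of Proposition~\ref{prop:sym-action} as used later only needs the range $r \geq d+s+1$ where the cohomology is \emph{entirely} $\Q_\ell(-r/2)$ — and in fact Corollary~\ref{cor:cohom} shows equality $H^r \cong \Q_\ell(-r/2)$ already starts at $r = d+s+1 = 2n+2(m-2)(h+1)-1$; so one should double-check the indexing and, if the genuinely problematic even degree $d+s$ is below the claimed range $r \geq 2n+2(m-2)(h+1)-1$, there is no obstacle at all. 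The second, if one does want the boundary degree, is to observe that on the subspace $\Q_\ell(-r/2) \subseteq H^r(X,\Q_\ell)$ the action is trivial (it is the image of the hyperplane class power, which restricts from $\proj^N$ and is $\sigma$-fixed by Lemma~\ref{lem:proj-sym-action}), and then invoke semisimplicity of the finite-group representation $H^r(X,\Q_\ell)$ together with purity — but this only pins down a subrepresentation, so the first approach (careful bookkeeping of which degrees are actually claimed) is the one I would carry out.
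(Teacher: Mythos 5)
Your bookkeeping of the range is correct: the claimed range starts exactly at $d+s+2$ (where $d=\dim X_{m,n,h}$, $s=\dim\Sing X_{m,n,h}$), so the delicate degree $d+s+1$ never enters, odd degrees vanish by Corollary~\ref{cor:cohom}, and degrees above $2d$ vanish trivially. The gap is in your key step: you assert that the isomorphism $H^r(X_{m,n,h},\Q_\ell)\cong\Q_\ell(-r/2)$ of Theorem~\ref{thm:comp-int-cohom} in the upper range $d+s+2\le r\le 2d$ is induced by the restriction map from $\proj^{mn-1}$, and you justify this by claiming it comes from the long exact sequence of the pair $(\proj^N,\proj^N\setminus X)$ together with the small cohomological dimension of the complement. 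That argument only covers the \emph{lower} range: with $U=\proj^N\setminus X$ a union of $N-d$ affine opens one gets $\mathrm{cd}(U)\le 2N-d-1$, and Poincar\'e duality on the smooth $U$ applied to the sequence $H^r_c(U)\to H^r(\proj^N)\to H^r(X)\to H^{r+1}_c(U)$ yields an isomorphism only for $r\le d-1$ (weak Lefschetz); for $r\ge d+s+2$ the outer terms need not vanish. In \cite{sing-var} the upper range of Prop.~3.3 is obtained from the Lefschetz-type results for linear sections (their Thm.~2.4 and Prop.~2.5, the ``semi-regular pair'' machinery), whose isomorphisms are Gysin-type maps $H^r(X)\to H^{r+2c}$ of the ambient (Veronese re-embedded) space, not restriction maps. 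Since $S_n^m$ has nontrivial characters, one-dimensionality alone does not force triviality, so as written the equivariant identification — the heart of your proof — is unsupported.

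The step can be repaired, but it needs an argument you have not supplied: it suffices to show that restriction $H^r(\proj^{mn-1},\Q_\ell)\to H^r(X_{m,n,h},\Q_\ell)$ is nonzero for even $r$ in the range, i.e.\ that $(H|_X)^{r/2}\neq 0$ for $r/2\le d$ on the irreducible projective variety $X$; this is true (e.g.\ pull back along an alteration $\pi\colon\tilde X\to X$ and use that $(\pi^*H)^d$ has degree $\deg(\pi)\deg(X)\neq 0$), and combined with the one-dimensionality from Corollary~\ref{cor:cohom} it makes restriction an equivariant isomorphism, after which Lemma~\ref{lem:proj-sym-action} finishes as you say. The paper avoids this issue entirely by exploiting the canonicity (hence automatic equivariance) of the semi-regular pair isomorphisms themselves: the degree $n-h-1$ Veronese embedding exhibits $X_{m,n,h}\subset X_{m,n,h+1}$ as an $S_n^m$-stable linear section, \cite[Thm.~2.4, Prop.~2.5]{sing-var} together with Theorem~\ref{thm:codim} give canonical isomorphisms $H^r(X_{m,n,h},\Q_\ell)\cong H^{r+2(m-1)}(X_{m,n,h+1},\Q_\ell(m-1))$ in the stated range, and induction on $h$ up to $h=n-3$ terminates in (a linear section of) projective space, where Lemma~\ref{lem:proj-sym-action} applies. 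Either fix works; your route, once the nonvanishing of the restricted hyperplane class powers is proved, is arguably more direct, but that nonvanishing is precisely the missing ingredient.
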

\begin{proof}
  The degree $n - h - 1$ Veronese embedding realizes the inclusion $X_{m, n, h} \inject X_{m, n, h + 1}$ as a proper linear section of codimension $m - 1$ which is preserved by the $S_n^m$-action. By \cite[Prop.\ 2.5]{sing-var}, since $X_{m, n, h}$ is regular in codimension one, $(X_{m, n, h + 1}, X_{m, n, h})$ is a ``semi-regular pair'' in the terminology of loc.\ cit. Hence, \cite[Thm.\ 2.4]{sing-var} combined with Theorem \ref{thm:codim} gives a canonical, Galois-equivariant isomorphism
  \[
  H^r(X_{m, n, h}, \Q_\ell) \cong H^{r + 2(m - 1)}(X_{m, n, h+1}, \Q_\ell(m - 1))
  \]
  for all $r \geq 2 \dim X_{m, n, h} - (2h + 3) + 2 = 2n + 2 (m - 2)(h + 1) - 1$.

  Since $r + 2(m - 1) \geq 2 \dim X_{m, n, h+1} - (2(h + 1) + 3) + 2$, we may apply the same argument with $h$ replaced by $h + 1$. By induction, we are reduced to the case $h = n - 3$, where the same argument applies to give an isomorphism
  \[
  H^r(X_{m, n, n - 3}, \Q_\ell) \cong H^{r + 2(m - 1)}(\proj^{mn - m - 2}, \Q_\ell(m - 1)),
  \]
  noting that $X_{m, n, n - 3} \subseteq \proj^{mn - 1}$ is defined by $m - 1$ linear forms and $m - 1$ quadratic forms. By Lemma \ref{lem:proj-sym-action}, $S_n^m$ acts trivially on the cohomology of projective space, so we are done.
\end{proof}

\section{Arithmetic functions}
\label{section:arithmetic}
The above results in cohomology imply asymptotic bounds on the sum
\[
\frac{1}{q^n} \sum_{f \in M_n(\F_q)} \Bigl( \sum_{\substack{g \in \F_q[x] \\ \deg g \leq h}} \varphi(f + g) \Bigr)^m = q^{h + 1} \cdot \frac{1}{q^n} \sum_{\substack{f_1, \dots, f_m \in M_n(\F_q) \\ f_i \sim f_j}} \varphi(f_1) \cdots \varphi(f_m)
\]
for many arithmetic functions $\varphi$. We prove the general result, then apply it to the von Mangoldt function $\Lambda$ and the M\"obius function $\mu$.

\begin{remark}
  \label{rem:char-restriction}
  For the remainder of the paper, our point of view is that $m$, $n$, and $h$ are fixed and $q$ is growing, ranging over powers of primes; all asymptotic notation has implicit constants that may depend on $m$, $n$, and $h$.

  If $m \geq 3$, we additionally restrict $q$ to powers of primes $p > n$. We suspect that this restriction (due solely to the failure of Lemma \ref{lem:sing-pts} in low characteristic) could be avoided by more refined reasoning about the singular locus of $X_{m, n, h}$.
\end{remark}

\begin{lemma}
  \label{lem:pt-count}
  For any $\sigma = (\sigma_1, \dots, \sigma_m) \in S_n^m$,
  \begin{multline*}
    \#\set{(z_{i, j}) \in Y_{m, n, h}: \Frob_q(z_{i, j}) = \sigma_i(z_{i, j})\ \forall i, j} \\
    = q^{n + (m - 1) (h + 1)} + \bigO(q^{n + (m - 2) (h + 1)}).
  \end{multline*}
\end{lemma}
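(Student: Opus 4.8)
The plan is to apply the Grothendieck--Lefschetz trace formula to the affine cone $Y = Y_{m, n, h}$, or rather to relate the count to one on the projective variety $X = X_{m, n, h}$, and then feed in the cohomological information from Corollary \ref{cor:cohom} and the triviality of the $S_n^m$-action from Proposition \ref{prop:sym-action}. The twist is that Frobenius is composed with the fixed permutation $\sigma \in S_n^m$: we are counting fixed points not of $\Frob_q$ but of $\sigma \circ \Frob_q$ acting on $Y(\overline{\F_q})$. Since $\sigma$ acts as an automorphism of $X$ commuting with $\Frob_q$ (it is defined over the prime field), the composite $\sigma \circ \Frob_q$ is again a ``Frobenius-like'' endomorphism, and the Lefschetz trace formula applies: $\#\{x \in X(\overline{\F_q}): (\sigma \circ \Frob_q)(x) = x\} = \sum_r (-1)^r \Tr(\sigma^* \circ \Frob_q^* \mid H^r_c(X, \Q_\ell))$, and since $X$ is projective we may drop the subscript $c$.

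First I would pass from the affine cone $Y$ to the projective variety $X$: a point of $Y$ fixed by $\sigma \circ \Frob_q$ other than the origin determines a point of $X$ fixed by the same operator, and conversely each such point of $X$ lifts to $q - 1$ nonzero points of $Y$ scaled along the cone (one must check the scaling is compatible with $\sigma \circ \Frob_q$, which it is, since $\sigma$ is linear and $\Frob_q$ acts on the scalar by $\lambda \mapsto \lambda^q$, so a line fixed as a point of $\proj^{mn-1}$ contributes exactly $q - 1$ fixed nonzero vectors — here one uses that the fixed locus on the line is the set of $\lambda$ with $\lambda^q = c\lambda$ for the relevant eigenvalue $c$, which has size $q$ or $1$; the bookkeeping gives $\#Y^{\sigma\Frob} = (q-1)\#X^{\sigma\Frob} + 1$). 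So it suffices to compute $\#X^{\sigma\Frob}$ up to the stated error and multiply by $q - 1$.

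Next I would split the Lefschetz sum into the ``high-degree'' range $2n + 2(m-2)(h+1) - 1 \leq r \leq 2\dim X = 2n + 2(m-1)(h+1) - 2$ and the ``low-degree'' range $0 \leq r \leq 2n + 2(m-2)(h+1) - 2$. In the high range, Corollary \ref{cor:cohom} identifies $H^r(X, \Q_\ell)$ with $\Q_\ell(-r/2)$ for $r$ even and $0$ for $r$ odd, and Proposition \ref{prop:sym-action} says $\sigma^*$ is the identity there; hence $\Tr(\sigma^* \Frob_q^* \mid H^r) = q^{r/2}$ for $r$ even in this range and $0$ otherwise. Summing $(-1)^r q^{r/2}$ over even $r$ from $2n + 2(m-2)(h+1)$ — wait, the smallest even $r$ in the range is $2n + 2(m-2)(h+1)$ if that is $\geq 2n+2(m-2)(h+1)-1$, which it is — up to the top $2\dim X$, the dominant term is the top one, $q^{\dim X} = q^{n + (m-1)(h+1) - 1}$, and the remaining terms in this range contribute $\bigO(q^{\dim X - 1}) = \bigO(q^{n + (m-1)(h+1) - 2})$. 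In the low range, I would bound $|\Tr(\sigma^*\Frob_q^* \mid H^r(X,\Q_\ell))|$ by $(\dim H^r) \cdot q^{r/2}$ using Deligne's Weil bound (the eigenvalues of $\Frob_q$ on $H^r$ of a proper variety have absolute value $\leq q^{r/2}$, and $\sigma^*$ being an automorphism of finite order does not change absolute values); since the Betti numbers are bounded by a constant depending only on $m, n, h$ (the variety is a fixed complete intersection), the whole low range contributes $\bigO(q^{(2n + 2(m-2)(h+1) - 2)/2}) = \bigO(q^{n + (m-2)(h+1) - 1})$. Therefore $\#X^{\sigma\Frob} = q^{n + (m-1)(h+1) - 1} + \bigO(q^{n + (m-2)(h+1) - 1})$, and multiplying by $q - 1$ and adding the contribution of the origin gives $\#Y^{\sigma\Frob} = q^{n + (m-1)(h+1)} + \bigO(q^{n + (m-2)(h+1)})$, as claimed.

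The main obstacle — really the only nonroutine point — is justifying that the Lefschetz trace formula applies to the twisted operator $\sigma \circ \Frob_q$ and that $\sigma^*$ acts as the identity on the relevant cohomology in a $\Frob_q$-equivariant way, so that the two actions genuinely multiply on each $H^r$; but this is exactly the content of Proposition \ref{prop:sym-action} together with the fact that $\sigma$ is defined over $\F_q$ (indeed over the prime field) and hence commutes with $\Frob_q^*$. The rest is weight bounds and counting, for which only the boundedness of Betti numbers of the fixed family $X_{m,n,h}$ is needed, and that is automatic since $m, n, h$ are fixed.
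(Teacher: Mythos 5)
Your overall strategy is exactly the paper's: apply the Grothendieck--Lefschetz trace formula to the twisted operator $\sigma\circ\Frob_q$ on the projective variety $X_{m,n,h}$, use Proposition \ref{prop:sym-action} to discard the $\sigma$-action and Corollary \ref{cor:cohom} to evaluate the traces in degrees $r \geq 2n+2(m-2)(h+1)-1$, bound the low-degree range by Deligne's weight bounds with Betti numbers depending only on $(m,n,h)$, and pass to the affine cone by multiplying by $q-1$ (your formula $\#Y^{\sigma\Frob}=(q-1)\#X^{\sigma\Frob}+1$ is correct; the hedge about ``size $q$ or $1$'' is unnecessary, since the relevant eigenvalue $c$ is never $0$, so every fixed line carries exactly $q-1$ nonzero fixed vectors).

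The gap is in the final bookkeeping. The high-degree range contributes the full geometric series $q^{\dim X}+q^{\dim X-1}+\cdots+q^{n+(m-2)(h+1)}$ ($h+1$ terms), and you throw away everything but the top term into an error $\bigO(q^{\dim X-1})=\bigO(q^{n+(m-1)(h+1)-2})$. Your stated conclusion $\#X^{\sigma\Frob}=q^{n+(m-1)(h+1)-1}+\bigO(q^{n+(m-2)(h+1)-1})$ therefore does not follow from your own accounting --- and is in fact false for $h\geq 1$, since the discarded terms are genuinely present and larger than that error. After multiplying by $q-1$ you only obtain $\#Y^{\sigma\Frob}=q^{n+(m-1)(h+1)}+\bigO(q^{n+(m-1)(h+1)-1})$, which is weaker than the claimed $\bigO(q^{n+(m-2)(h+1)})$ by a factor of $q^h$; that loss would propagate into Theorem \ref{thm:arith-bound} and its corollaries (for instance, it would no longer recover the Keating--Rudnick order of magnitude when $m=2$). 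The fix is to keep the whole series before multiplying by $q-1$: the high-degree contribution is $\sum_{j=1}^{h+1} q^{n+(m-1)(h+1)-j}=\frac{1}{q-1}\bigl(q^{n+(m-1)(h+1)}-q^{n+(m-2)(h+1)}\bigr)$, so the multiplication by $q-1$ telescopes and the only surviving error is $(q-1)\cdot\bigO(q^{n+(m-2)(h+1)-1})=\bigO(q^{n+(m-2)(h+1)})$, exactly as in the paper's proof.
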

\begin{proof}
  By the Grothendieck--Lefschetz trace formula,
  \begin{multline*}
    \#\set{[z_{i, j}] \in X_{m, n, h}: \Frob_q(z_{i, j}) = \sigma_i(z_{i, j})\ \forall i, j} \\
    = \sum_{r} (-1)^r \Tr\bigl( \Frob_q \circ \sigma^{-1}: H^r(X_{m, n, h}, \Q_\ell) \bigr).
  \end{multline*}
  For all $r \geq 2n + 2 (m - 2)(h + 1) - 1$, by Proposition \ref{prop:sym-action}, $\sigma$ acts trivially on $H^r(X_{m, n, h}, \Q_\ell)$, so
  \[
  \Tr\bigl( \Frob_q \circ \sigma^{-1}: H^r(X_{m, n, h}, \Q_\ell) \bigr) = \Tr\bigl( \Frob_q: H^r(X_{m, n, h}, \Q_\ell) \bigr).
  \]
  By Corollary \ref{cor:cohom},
  \begin{align*}
    &\sum_{r} (-1)^r \Tr\bigl( \Frob_q: H^r(X_{m, n, h}, \Q_\ell) \bigr) \\
    &= \sum_{j=1}^{h + 1} q^{n + (m - 1) (h + 1) - j} + \bigO(q^{n + (m - 2) (h + 1) - 1}) \\
    &= \frac{1}{q - 1} \left( q^{n + (m - 1) (h + 1)} + \bigO(q^{n + (m - 2) (h + 1)}) \right).
  \end{align*}
  Passing to the affine cone $Y_{m, n, h}$, we obtain the result.
\end{proof}

Now we can now prove our main result.

\begin{defn}
  We say a function $\varphi: M_n(\F_q) \to \R$ is \emph{arithmetic of von Mangoldt type} with coefficients $(c_\sigma)_{\sigma \in S_n}$ if $\varphi$ is given by
  \[
  \varphi(f) = \sum_{\substack{(z_1, \dots, z_n) \in \aff^n \\ f = \prod_i (x - z_i)}} \sum_{\sigma \in S_n} c_\sigma \delta_\sigma(z_1, \dots, z_n)
  \]
  for all $f \in M_n(\F_q)$, where
  \[
  \delta_\sigma(z_1, \dots, z_n) = \begin{cases}
    1 & \text{if } \Frob_q(z_1, \dots, z_n) = \sigma(z_1, \dots, z_n), \\
    0 & \text{otherwise}.
  \end{cases}
  \]
\end{defn}

\begin{thm}
  \label{thm:arith-bound}
  Fix $m \geq 2$, $n \geq 4$, and $1 \leq h \leq n - 3$. Let $\varphi_1, \dots, \varphi_m: M_n(\F_q) \to \R$ be arithmetic functions of von Mangoldt type where $\varphi_i$ has coefficients $(c_{i, \sigma})_{\sigma \in S_n}$. Then, in the limit as $q \to \infty$ (with the restrictions of Remark \ref{rem:char-restriction}),
  \[
  \frac{1}{q^n} \sum_{f \in M_n(\F_q)} \prod_{i=1}^{m} \sum_{\substack{g \in \F_q[x] \\ \deg g \leq h}} \varphi_i(f + g) = \prod_{i=1}^{m} \Bigl( \sum_{\tau \in S_n} c_{i, \tau} q^{h + 1} \Bigr) + \bigO(q^{(h + 1) (m - 1)}).
  \]
\end{thm}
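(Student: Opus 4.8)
The plan is to expand the left-hand side combinatorially, reduce it to a sum of the point-counts controlled by Lemma \ref{lem:pt-count}, and then identify the main term. First I would rewrite, using the definition of $\varphi_i$ being of von Mangoldt type together with the substitution $f + g$: for each $i$, the inner sum $\sum_{\deg g \leq h} \varphi_i(f+g)$ counts, with weight $c_{i,\sigma}$, the tuples of roots $(z_{i,1},\dots,z_{i,n}) \in \aff^n$ whose product of linear factors equals some $f + g$ with $\deg g \le h$ and on which $\Frob_q$ induces the permutation $\sigma_i$. Summing over $f \in M_n(\F_q)$ and taking the product over $i$, the condition that all the $f + g$ share the same $f$ (i.e., agree in the top $n - h - 1$ coefficients) is exactly the defining condition $f_i \sim_h f_j$ of the affine cone $Y_{m,n,h}$. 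Thus the left-hand side becomes
\[
\frac{1}{q^n} \sum_{\sigma = (\sigma_1,\dots,\sigma_m) \in S_n^m} \Bigl( \prod_{i=1}^m c_{i,\sigma_i} \Bigr) \cdot \#\set{(z_{i,j}) \in Y_{m,n,h} : \Frob_q(z_{i,j}) = \sigma_i(z_{i,j})\ \forall i,j}.
\]

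Next I would apply Lemma \ref{lem:pt-count} to each of the finitely many point-counts, replacing each by $q^{n + (m-1)(h+1)} + \bigO(q^{n + (m-2)(h+1)})$. Since $S_n^m$ is a fixed finite set (independent of $q$), the error terms aggregate into a single $\bigO(q^{n + (m-2)(h+1)})$, and the main term becomes
\[
\frac{1}{q^n} \Bigl( \sum_{\sigma \in S_n^m} \prod_{i=1}^m c_{i,\sigma_i} \Bigr) q^{n + (m-1)(h+1)} = q^{(m-1)(h+1)} \prod_{i=1}^m \Bigl( \sum_{\tau \in S_n} c_{i,\tau} \Bigr),
\]
where the factorization of the coefficient sum over $S_n^m$ into a product of sums over $S_n$ is the elementary identity $\sum_{\sigma \in S_n^m} \prod_i c_{i,\sigma_i} = \prod_i \sum_\tau c_{i,\tau}$. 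Dividing out by $q^n$ in the error term gives $\bigO(q^{(m-2)(h+1)}) = \bigO(q^{(h+1)(m-1)})$ (in fact smaller), so the claimed asymptotic follows after writing $q^{(m-1)(h+1)} \prod_i (\sum_\tau c_{i,\tau}) = \prod_i (\sum_\tau c_{i,\tau} q^{h+1})$.

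The main obstacle is the bookkeeping in the first step: one must check carefully that summing over $f \in M_n(\F_q)$ and multiplying the $m$ inner sums genuinely reproduces the incidence variety $Y_{m,n,h}$ with the correct multiplicities. The subtle point is that a single polynomial $f_i = \prod_j (x - z_{i,j})$ may arise from several root-orderings, and that $\delta_{\sigma_i}$ depends on the ordering; but the definition of von Mangoldt type already sums over all orderings $(z_1,\dots,z_n)$ with $f = \prod(x - z_i)$, so passing from polynomials to ordered root tuples is exactly the passage from $X_{m,n,h}$ (or its quotient by $S_n^m$) to its affine cone $Y_{m,n,h}$, and the factor $q^{h+1}$ versus the $\frac{1}{q-1}$ appearing in Lemma \ref{lem:pt-count} must be reconciled via the cone-versus-projective-variety comparison — but this is precisely what Lemma \ref{lem:pt-count} already packages, so no new difficulty arises there. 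Once the identification is set up correctly, the remainder is the routine finite-sum manipulation above.
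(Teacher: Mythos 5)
Your overall route is the same as the paper's (expand each $\varphi_i$ via the von Mangoldt--type definition, identify the resulting sum with the Frobenius-twisted point counts on $Y_{m,n,h}$, apply Lemma \ref{lem:pt-count}, and sum over the finite set $S_n^m$), but there is a genuine quantitative error in your first reduction: a missing factor of $q^{h+1}$. When you sum over $f \in M_n(\F_q)$ and over the shifts $g_1, \dots, g_m$, the map $(f, g_1, \dots, g_m) \mapsto (f+g_1, \dots, f+g_m)$ is $q^{h+1}$-to-one onto the set of $m$-tuples $(f_1,\dots,f_m)$ with $f_i \sim_h f_j$, because every $f$ in the common interval of the $f_i$ serves as a base point. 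So the correct identity is
\[
\frac{1}{q^n} \sum_{f \in M_n(\F_q)} \prod_{i=1}^{m} \sum_{\substack{g \in \F_q[x] \\ \deg g \leq h}} \varphi_i(f + g)
= \frac{q^{h+1}}{q^n} \sum_{\sigma \in S_n^m} \Bigl( \prod_{i=1}^m c_{i,\sigma_i} \Bigr) \#\set{(z_{i,j}) \in Y_{m,n,h} : \Frob_q(z_{i,j}) = \sigma_i(z_{i,j})\ \forall i,j},
\]
with the prefactor $q^{h+1}/q^n$, not $1/q^n$. With this factor restored, Lemma \ref{lem:pt-count} gives main term $q^{m(h+1)} \prod_i \bigl( \sum_{\tau} c_{i,\tau} \bigr) = \prod_i \bigl( \sum_\tau c_{i,\tau} q^{h+1} \bigr)$ and error term exactly $\bigO(q^{(h+1)(m-1)})$, which is the statement of the theorem and is how the paper argues.

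As written, your computation produces the main term $q^{(m-1)(h+1)} \prod_i \bigl( \sum_\tau c_{i,\tau} \bigr)$, and the final step asserts the identity $q^{(m-1)(h+1)} \prod_i \bigl( \sum_\tau c_{i,\tau} \bigr) = \prod_i \bigl( \sum_\tau c_{i,\tau} q^{h+1} \bigr)$, which is false --- the right side equals $q^{m(h+1)} \prod_i \bigl( \sum_\tau c_{i,\tau} \bigr)$, so you are off by precisely the missing $q^{h+1}$; the same omission is why your error term comes out as $\bigO(q^{(m-2)(h+1)})$ (``in fact smaller'') rather than the theorem's $\bigO(q^{(h+1)(m-1)})$. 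Relatedly, your closing remark that the $q^{h+1}$ is ``reconciled'' with the $\frac{1}{q-1}$ inside Lemma \ref{lem:pt-count} conflates two unrelated normalizations: the $\frac{1}{q-1}$ is internal to the lemma's proof (passing from $X_{m,n,h}$ to its affine cone) and is already absorbed into the lemma's statement, whereas the $q^{h+1}$ counts base points $f$ within an interval and must appear explicitly in your reduction from the moment over $f$ to the correlation sum over tuples. The fix is purely bookkeeping, but the proof is not correct as it stands.
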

\begin{proof}
  By Lemma \ref{lem:pt-count}, for any $\sigma_1, \dots, \sigma_m \in S_n$,
  \[
  \sum_{(z_{i, j}) \in Y_{m, n, h}} \prod_{i=1}^{m} \delta_{\sigma_i}(z_{i, 1}, \dots, z_{i, n}) = q^{n + (m - 1) (h + 1)} + \bigO(q^{n + (m - 2) (h + 1)}).
  \]
  Thus,
  \begin{align*}
    & \frac{1}{q^n} \sum_{f \in M_n(\F_q)} \prod_{i=1}^{m} \sum_{\substack{g \in \F_q[x] \\ \deg g \leq h}} \varphi_i(f + g) \\
    &= \frac{q^{h + 1}}{q^n} \sum_{\substack{f_1, \dots, f_m \in M_n(\F_q) \\ f_i \sim f_j}} \varphi_1(f_1) \cdots \varphi_m(f_m) \\
    &= \frac{q^{h + 1}}{q^n} \sum_{\substack{f_1, \dots, f_m \in M_n(\F_q) \\ f_i \sim f_j}} \sum_{\substack{(z_{i, j}) \in \aff^{mn} \\ f_i = \prod_j (x - z_{i, j})}} \sum_{\sigma = (\sigma_1, \dots, \sigma_m) \in S_n^m} \prod_{i=1}^{m} c_{\sigma_i} \delta_{\sigma_i}(z_{i, 1}, \dots, z_{i, n}) \\
    &= \frac{q^{h + 1}}{q^n} \sum_{\sigma \in S_n^m} \sum_{(z_{i, j}) \in Y_{m, n, h}} \prod_{i=1}^{m} c_{i, \sigma_i} \delta_{\sigma_i}(z_{i, 1}, \dots, z_{i, n}) \\
    &= \frac{q^{h + 1}}{q^n} \sum_{\sigma \in S_n^m} c_{1, \sigma_1} \cdots c_{m, \sigma_m} \sum_{(z_{i, j}) \in Y_{m, n, h}} \prod_{i=1}^{m} \delta_{\sigma_i}(z_{i, 1}, \dots, z_{i, n}) \\
    &= \frac{q^{h + 1}}{q^n} \sum_{\sigma \in S_n^m} c_{1, \sigma_1} \cdots c_{m, \sigma_m} \left( q^{n + (m - 1) (h + 1)} + \bigO_\sigma(q^{n + (m - 2) (h + 1)}) \right) \\
    &= \frac{q^{h + 1}}{q^n} \sum_{\sigma \in S_n^m} c_{1, \sigma_1} \cdots c_{m, \sigma_m} q^{n + (m - 1) (h + 1)} + \bigO(q^{(h + 1) (m - 1)}) \\
    &= \Bigl( \prod_{i=1}^{m} \sum_{\tau \in S_n} c_{i, \tau} \Bigr) q^{(h + 1) m} + \bigO(q^{(h + 1) (m - 1)}) \\
    &= \prod_{i=1}^{m} \Bigl( \sum_{\tau \in S_n} c_{i, \tau} q^{h + 1} \Bigr) + \bigO(q^{(h + 1) (m - 1)}).
    \qedhere
  \end{align*}
\end{proof}

Let us also note some basic facts about the set of arithmetic functions of von Mangoldt type.

\begin{lemma}
  \label{lem:Mangoldt-type}
  The set of arithmetic functions $\varphi: M_n(\F_q) \to \R$ of von Mangoldt type is an $\R$-vector space containing the constant functions.
\end{lemma}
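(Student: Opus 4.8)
The plan is to verify the two claimed properties directly from the defining formula for arithmetic functions of von Mangoldt type, treating the coefficient tuple $(c_\sigma)_{\sigma \in S_n}$ as the only data that varies. First I would observe that the map
\[
(c_\sigma)_{\sigma \in S_n} \longmapsto \varphi, \qquad \varphi(f) = \sum_{\substack{(z_1, \dots, z_n) \in \aff^n \\ f = \prod_i (x - z_i)}} \sum_{\sigma \in S_n} c_\sigma \delta_\sigma(z_1, \dots, z_n),
\]
is $\R$-linear in the $c_\sigma$: scaling all coefficients by $\lambda \in \R$ scales $\varphi$ by $\lambda$, and the coefficient tuple of a sum $\varphi + \varphi'$ is the sum of the tuples, since the inner double sum is itself linear in the coefficients for each fixed $f$. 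Hence the image of this map, which is precisely the set of arithmetic functions of von Mangoldt type, is an $\R$-linear subspace of the space of all functions $M_n(\F_q) \to \R$. (One should note the map need not be injective, but that is irrelevant: the image of a linear map is a subspace regardless.)

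Next I would show the constant functions lie in this subspace. The natural candidate is to take $c_\sigma$ to be the same constant $c$ for every $\sigma \in S_n$; then for each $f$ and each tuple of roots $(z_1, \dots, z_n)$ with $f = \prod_i(x - z_i)$, the inner sum $\sum_{\sigma} c\, \delta_\sigma(z_1, \dots, z_n)$ counts (with weight $c$) the number of $\sigma \in S_n$ with $\Frob_q(z_1,\dots,z_n) = \sigma(z_1,\dots,z_n)$. This count is the size of the stabilizer in $S_n$ of the multiset structure relating $(z_1,\dots,z_n)$ to its Frobenius image — concretely, if $f$ has distinct roots it equals $1$, but with multiplicities it is a product of factorials of multiplicities, so this choice does \emph{not} in general give a constant function. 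The fix is to weight each root-tuple by the reciprocal of that stabilizer size, or more cleanly: fix a single $n$-cycle is not what we want either. Instead, take $c_\sigma = \tfrac{1}{n!}$ for all $\sigma$; then summing over all $n!$ orderings $(z_1,\dots,z_n)$ of the roots of a fixed $f$ and over all $\sigma$, one gets $\tfrac{1}{n!}$ times the number of pairs (ordering, permutation) realizing the Frobenius action, which by a standard orbit-counting argument equals $n!$, independent of $f$ — so $\varphi \equiv 1$. (Compare the proof of the prime polynomial theorem in \S\ref{section:intro}, where exactly this bookkeeping appears: each $f \in M_n(\F_q)$ arises from $\Lambda(f)$ many single roots, and summing over \emph{all} roots with the cyclic structure recovers $\#\F_{q^n}$.) Once a single nonzero constant is shown to lie in the space, all constants do, by the linearity already established.

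The main obstacle is the bookkeeping in the second part: one must correctly account for multiplicities when a polynomial $f \in M_n(\F_q)$ is not squarefree, since the sum over root-orderings $(z_1,\dots,z_n)$ with $f = \prod(x-z_i)$ overcounts, and simultaneously the number of $\sigma \in S_n$ compatible with a given ordering depends on the repetition pattern. The key identity to nail down is that, for \emph{any} fixed tuple $(z_1,\dots,z_n) \in \aff^n$, the number of $\sigma \in S_n$ with $\sigma(z_1,\dots,z_n) = \Frob_q(z_1,\dots,z_n)$ equals the number of $\sigma \in S_n$ fixing $(z_1,\dots,z_n)$ (this set is either empty or a coset of the stabilizer — and it is nonempty precisely because $\Frob_q$ permutes the multiset of roots), and that when one then sums over all $n!$ orderings, the stabilizer sizes and the overcounting cancel to give exactly $n!$. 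I would present this as a short lemma-free computation inside the proof, since it is the same argument as the prime polynomial theorem reproved above. With that identity in hand, both assertions of the lemma follow immediately.
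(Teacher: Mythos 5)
Your proposal is correct and follows essentially the same route as the paper: take $c_\sigma = \tfrac{1}{n!}$ for all $\sigma$, observe that for each root-ordering the compatible permutations form a coset of the stabilizer $G$ (so there are $\#G$ of them), that there are $\#(S_n/G)$ distinct orderings, and that the products cancel to give $1$, with the vector-space structure coming from linearity of the defining formula in the coefficients. The only blemish is the mid-proof claim that constant coefficients ``do not in general give a constant function'' --- that is a false alarm (and contradicts the choice you then make), since the inner count $\#G$ is exactly compensated by the $n!/\#G$ distinct orderings in the outer sum, as your own final computation shows.
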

\begin{proof}
  Let $f \in M_n(\F_q)$ be arbitrary, and let $z_1, \dots, z_n \in \F_{q^n}$ such that $f = \prod_i (x - z_i)$. Let $G \subseteq S_n$ be the stabilizer of $(z_1, \dots, z_n)$. (E.g., $G = 1$ if and only if $f$ is squarefree.) In the notation of Theorem \ref{thm:arith-bound}, for $\tau \in S_n$, we have $\delta_\tau(z_1, \dots, z_n) = 1$ if and only if $\tau G = \sigma_f G$, where $\sigma_f \in S_n/G$ is the coset given by the action of $\Frob_q$ on $(z_1, \dots, z_n)$. Moreover, there are exactly $\#(S_n/G)$ possible choices of $(z_1, \dots, z_n)$. Hence,
  \[
  \sum_{\substack{(z_1, \dots, z_n) \in \F_{q^n}^n \\ f = \prod_i (x - z_i)}} \sum_{\tau \in S_n} \frac{1}{n!} \delta_\tau(z_1, \dots, z_n) = \frac{1}{n!} \#(S_n/G) \#G = 1.
  \]
  Thus, $1$ is of von Mangoldt type with coefficients $c_\sigma = \frac{1}{n!}$ for all $\sigma \in S_n$.

  Finally, it is immediate from the definition that adding and scaling arithmetic functions of von Mangoldt type produces more such functions with coefficients added and scaled componentwise.
\end{proof}

We can compare this notion to Rodgers' \emph{factorization functions} \cite[2.2]{Rodgers}.

\begin{prop}
  \label{prop:Mangoldt-factorization}
  Let $a: M_n(\F_q) \to \R$ be a \emph{factorization function}, i.e., if $f = P_1^{e_1} \cdot \dots \cdot P_k^{e_k} \in M_n(\F_q)$ with the $P_i$ distinct monic irreducible polynomials, then $a(f)$ depends only on the data $(\deg P_1, e_1; \dots; \deg P_k, e_k)$. There is an arithmetic function of von Mangoldt type $\varphi: M_n(\F_q) \to \R$ such that $\varphi(f) = a(f)$ for all squarefree $f \in M_n(\F_q)$.
\end{prop}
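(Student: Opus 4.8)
The plan is to match $\varphi$ and $a$ on squarefree polynomials by computing the value of an arithmetic function of von Mangoldt type on such a polynomial, mirroring the computation in the proof of Lemma~\ref{lem:Mangoldt-type}. First I would record the relevant combinatorial fact. If $f \in M_n(\F_q)$ is squarefree, its $n$ roots in $\overline{\F_q}$ are distinct, so there are exactly $n!$ tuples $(z_1, \dots, z_n)$ with $f = \prod_i (x - z_i)$, and $S_n$ permutes these tuples simply transitively. Fix one such tuple; the action of $\Frob_q$ on it is recorded by a unique $\pi_0 \in S_n$ (i.e.\ $\delta_{\pi_0}(z_1, \dots, z_n) = 1$, and $\delta_\sigma = 0$ for $\sigma \neq \pi_0$). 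Since the roots of each monic irreducible factor of $f$ form a single $\Frob_q$-orbit, the cycle type of $\pi_0$, viewed as a partition of $n$, is $\lambda(f)$, the partition whose parts are the degrees of the irreducible factors of $f$. Any other ordering of the roots differs from the fixed one by the action of some $\tau \in S_n$, and the permutation it records is the conjugate $\tau \pi_0 \tau^{-1}$; in particular it again has cycle type $\lambda(f)$.

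It follows that if the coefficients $(c_\sigma)_{\sigma \in S_n}$ of an arithmetic function $\varphi$ of von Mangoldt type depend only on the cycle type of $\sigma$ --- say $c_\sigma = c_{\lambda(\sigma)}$, where $\lambda(\sigma)$ is the cycle type of $\sigma$ --- then for squarefree $f$ the definition gives
\[
\varphi(f) = \sum_{\substack{(z_1, \dots, z_n) \\ f = \prod_i (x - z_i)}} \ \sum_{\sigma \in S_n} c_\sigma\, \delta_\sigma(z_1, \dots, z_n) = \sum_{\substack{(z_1, \dots, z_n) \\ f = \prod_i (x - z_i)}} c_{\lambda(f)} = n!\, c_{\lambda(f)},
\]
since each of the $n!$ orderings contributes the single coefficient attached to its recorded permutation, which has cycle type $\lambda(f)$. (With arbitrary coefficients one would instead get $\varphi(f) = |C_{S_n}(\pi_0)| \sum_{\sigma} c_\sigma$, the inner sum over the conjugacy class of $\pi_0$, but the class-function case is all that is needed here.)

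Now I would construct $\varphi$ explicitly. For a partition $\lambda \vdash n$ that arises as $\lambda(f)$ for at least one squarefree $f \in M_n(\F_q)$, all such $f$ share the same value $a(f)$ because $a$ is a factorization function; denote this common value $a_\lambda$, and set $a_\lambda := 0$ for the partitions of $n$ not so represented. Let $\varphi: M_n(\F_q) \to \R$ be the arithmetic function of von Mangoldt type with coefficients $c_\sigma := a_{\lambda(\sigma)}/n!$; it is well-defined, real-valued, and of von Mangoldt type by construction. By the previous paragraph, for every squarefree $f \in M_n(\F_q)$ we have $\varphi(f) = n!\, c_{\lambda(f)} = a_{\lambda(f)} = a(f)$, which is the assertion.

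The proposition is essentially formal: its only content is the observation that reordering the roots of a squarefree polynomial conjugates the recorded Frobenius permutation, so that, up to the factor $n!$, von Mangoldt type functions with class-function coefficients realize exactly the functions of the factorization type on squarefree polynomials. The one point that deserves a word of care --- and the closest thing to an obstacle --- is that a partition of $n$ need not occur as the factorization type of any squarefree polynomial over $\F_q$ (for instance, a partition with more than $q$ parts equal to $1$); but no squarefree $f$ then has that type, so the constraint is vacuous and assigning such partitions the value $0$ above does no harm.
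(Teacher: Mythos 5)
Your proof is correct and follows essentially the same route as the paper: both arguments rest on the observation that for squarefree $f$ each of the $n!$ orderings of the roots records a Frobenius permutation of cycle type $\lambda(f)$, conjugated by the reordering. The only cosmetic difference is that you take class-function coefficients $c_\sigma = a_{\lambda(\sigma)}/n!$ directly, whereas the paper builds the basis functions $\varphi_\sigma$ (scaled indicators of cycle type, with the centralizer order $d_\sigma$ appearing) and concludes by linearity.
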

\begin{proof}
  By linearity, it suffices to show the following: For each $\sigma \in S_n$, there is some nonzero $d_\sigma \in \R$ and an arithmetic function of von Mangoldt type $\varphi_\sigma: M_n(\F_q) \to \R$ such that for all \emph{squarefree} $f \in M_n(\F_q)$,
  \[
  \varphi_\sigma(f) = \begin{cases}
    d_\sigma & \text{if $f$ has cycle type $\sigma$}, \\
    0 & \text{otherwise}.
  \end{cases}
  \]
  Choose $\varphi_\sigma$ to have coefficients $c_\sigma = 1$ and $c_\tau = 0$ for each permutation $\tau \neq \sigma$. Then for all $f \in M_n(\F_q)$,
  \[
  \varphi_\sigma(f) = \sum_{\substack{(z_1, \dots, z_n) \in \aff^n \\ f = \prod_i (x - z_i)}} \delta_\sigma(z_1, \dots, z_n).
  \]
  If $f$ is squarefree and does not have cycle type $\sigma$, then $\varphi_\sigma(f) = 0$. If $f$ is squarefree and has cycle type $\sigma$, then $\varphi_\sigma(f) = d_\sigma$ is the number of ways to write $\sigma$ in cycle notation with nonincreasing cycle length (and with $1$-cycles written explicitly). Equivalently, $d_\sigma$ is the order of $S_n$ divided by the order of the conjugacy class of $\sigma$. So this is the function we need.
\end{proof}

\begin{remark}
  Rodgers' factorization functions are slightly more general, because they can be defined independently on the non-squarefree locus. As $\sigma$ ranges over a set of representatives of the conjugacy classes in $S_n$, the functions $\varphi_\sigma$ defined above form a basis of the space of arithmetic functions of von Mangoldt type. In particular, arithmetic functions of von Mangoldt type are determined by their restriction to the squarefree locus.

  As for the non-squarefree locus, we have the bound $\varphi_\sigma(g) \leq \varphi_\sigma(f)$ for any $f, g \in M_n(\F_q)$ with $f$ squarefree of cycle type $\sigma$. (Indeed, since $\varphi_\sigma(g) \geq 0$ for all $g$, this follows from Lemma \ref{lem:Mangoldt-type}.)
\end{remark}

As a side note, since we know the cohomology of $\aff^n$ completely, we can prove an exact result for $m = 1$:
\begin{prop}
  \label{prop:m=1}
  For all $n \geq 1$ and any arithmetic function $\varphi: M_n(\F_q) \to \R$ of von Mangoldt type with coefficients $(c_\sigma)_{\sigma \in S_n}$,
  \[
  \sum_{f \in M_n(\F_q)} \varphi(f) = \sum_{\sigma \in S_n} c_\sigma q^n.
  \]
\end{prop}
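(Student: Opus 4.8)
The plan is to imitate the cohomological argument used for the moments $m \geq 2$, but now over the variety $Y_{1,n,h}$, which is simply $\aff^n$ (the defining equations of $X_{1,n,h}$ are vacuous, so $Y_{1,n,h} = \aff^n$ and there is no short-interval constraint when $m = 1$). First I would observe that the relevant point count is
\[
\#\set{(z_1,\dots,z_n) \in \aff^n(\overline{\F_q}) : \Frob_q(z_j) = \sigma(z_j)\ \forall j},
\]
and that, as in the proof of the prime polynomial theorem given in \S\ref{section:short-intervals} and in Lemma \ref{lem:pt-count}, this quantity is exactly $q^n$ for \emph{every} $\sigma \in S_n$. The cleanest way to see this: the map $\aff^n \to \aff^n$ (affine space of coefficients) sending $(z_j)$ to the coefficients of $\prod_j(x - z_j)$ is finite and surjective, and the fiber over $f \in M_n(\F_q)$ consists of all orderings of the multiset of roots of $f$; among these $n!/|G|$ orderings (with $G$ the stabilizer), the number on which $\Frob_q$ acts by the specified coset class is $|G|$ when that class equals $\sigma_f G$ and $0$ otherwise. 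But one may also extract this directly from the Grothendieck--Lefschetz trace formula: $H^r(\aff^n, \Q_\ell) = 0$ for $r > 0$ and $\Q_\ell$ for $r = 0$, on which both $\Frob_q$ and $\sigma$ act trivially, so the alternating trace of $\Frob_q \circ \sigma^{-1}$ is $1$ (accounting for the passage from the nonexistent ``projective'' version to the affine cone, or simply $q^n$ if one counts in $\aff^n$ directly).

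Granting this, the proposition is a short unwinding, exactly parallel to Theorem \ref{thm:arith-bound}. Expanding $\varphi$ by its definition as a function of von Mangoldt type,
\begin{align*}
\sum_{f \in M_n(\F_q)} \varphi(f)
&= \sum_{f \in M_n(\F_q)} \sum_{\substack{(z_1,\dots,z_n) \in \aff^n \\ f = \prod_i(x - z_i)}} \sum_{\sigma \in S_n} c_\sigma\, \delta_\sigma(z_1,\dots,z_n) \\
&= \sum_{\sigma \in S_n} c_\sigma \sum_{(z_1,\dots,z_n) \in \aff^n(\F_{q^n})} \delta_\sigma(z_1,\dots,z_n) \\
&= \sum_{\sigma \in S_n} c_\sigma \cdot q^n,
\end{align*}
where the middle step reorganizes the double sum over $(f, (z_i))$ with $f = \prod(x - z_i)$ into a single sum over all $n$-tuples $(z_i)$ (each tuple determines its $f$), and the last step uses the point count above. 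This is an exact identity, with no error term, precisely because the cohomology of $\aff^n$ is concentrated in degree $0$.

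The main (and essentially only) obstacle is to nail down the point count $\#\{(z_i) : \Frob_q(z_i) = \sigma(z_i)\} = q^n$ cleanly and to make sure the bookkeeping of the $\delta_\sigma$ over repeated-root configurations is correct; this is the content of Lemma \ref{lem:Mangoldt-type}'s computation, where one checks that the various orderings of a given multiset of roots contribute $|G|$ to exactly one coset class. Since this combinatorial fact has already been established in the proof of Lemma \ref{lem:Mangoldt-type}, there is genuinely nothing hard left: the proposition follows by combining that counting fact with the triviality of $H^{>0}(\aff^n, \Q_\ell)$. I would present the argument in two lines, citing Lemma \ref{lem:Mangoldt-type} (or the proof of the prime polynomial theorem) for the point count and then performing the same rearrangement of sums as in Theorem \ref{thm:arith-bound}.
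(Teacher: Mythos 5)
Your overall skeleton is the same as the paper's: expand $\varphi$ by its definition, interchange the sums so that the inner sum becomes $\sum_{(z_1,\dots,z_n)\in\aff^n}\delta_\sigma(z_1,\dots,z_n)$, and plug in the exact count $q^n$ for each $\sigma$. The rearrangement step is correct and identical to the paper's. The gap is in how you justify the key count. Your ``cleanest way'' via fibers over $M_n(\F_q)$ is false as stated: for squarefree $f$ whose Frobenius class matches $\sigma$, the number of orderings of the roots with $\delta_\sigma=1$ is the order of the centralizer of $\sigma$, i.e.\ $d_\sigma=\#S_n/\#S_\sigma$ (this is exactly the computation of $\varphi_\sigma(f)=d_\sigma$ in Proposition \ref{prop:Mangoldt-factorization}), not $\lvert G\rvert$. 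Concretely, for $n=2$, $\sigma=\id$, and $f=(x-a)(x-b)$ with $a\neq b\in\F_q$, both orderings satisfy $\delta_{\id}=1$, while your recipe gives $\lvert G\rvert=1$; summing your claimed fiber counts over $M_2(\F_q)$ yields $(q^2+3q)/2$ rather than $q^2$ (these differ for $q>3$). Nor does Lemma \ref{lem:Mangoldt-type} supply what you need: its computation is that $\sum_{(z_i)}\sum_{\tau\in S_n}\delta_\tau=n!$ for each fixed $f$, a count aggregated over all $\tau$, which says nothing about the contribution of a single $\sigma$.

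Your cohomological fallback also has a genuine error: for the non-proper variety $\aff^n$, the Grothendieck--Lefschetz fixed-point formula must be applied to \emph{compactly supported} cohomology. With ordinary cohomology, $H^0(\aff^n,\Q_\ell)=\Q_\ell$ and $H^{>0}=0$ give alternating trace $1$, not $q^n$, and there is no ``projective version / affine cone'' correction available here --- that device belongs to Lemma \ref{lem:pt-count}, where $X_{m,n,h}$ is projective and one passes to its cone; for $m=1$ there is nothing to pass from. The paper's proof instead uses Poincar\'e duality to get $H_c^{2n}(\aff^n,\Q_\ell)=\Q_\ell(-n)$ and $H_c^r=0$ otherwise, on which $\sigma^{-1}$ acts trivially and $\Frob_q$ acts by $q^n$, so the twisted Lefschetz formula gives exactly $q^n$. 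Alternatively, an elementary count works: a tuple with $\Frob_q(z_1,\dots,z_n)=\sigma(z_1,\dots,z_n)$ is determined by choosing, for each $\ell$-cycle of $\sigma$, one element of $\F_{q^\ell}$, giving $\prod_\ell q^\ell=q^n$. With either of these substituted for your two justifications, the rest of your argument goes through verbatim.
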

\begin{proof}
  By Poincar\'e duality, the compactly supported cohomology of $\aff^n$ is given by
  \[
  H_c^r(\aff^n, \Q_\ell) = \begin{cases}
    \Q_\ell(-n), & r = 2n, \\
    0, & r \neq 2n.
  \end{cases}
  \]
  By the Grothendieck--Lefschetz trace formula, for any $\sigma \in S_n$,
  \begin{align*}
    &\sum_{(z_1, \dots, z_n) \in \aff^n} \delta_\sigma(z_1, \dots, z_n) \\
    &= \#\set{(z_1, \dots, z_n) \in \aff^n: \Frob_q(z_1, \dots, z_n) = \sigma(z_1, \dots, z_n)} \\
    &= \sum_{r} (-1)^r \Tr\bigl( \Frob_q \circ \sigma^{-1}: H_c^r(\aff^n, \Q_\ell) \bigr) = q^n.
  \end{align*}
  Thus,
  \begin{align*}
    \sum_{f \in M_n(\F_q)} \varphi(f) &= \sum_{f \in M_n(\F_q)} \sum_{\substack{(z_1, \dots, z_n) \in \aff^n \\ f = \prod_i (x - z_i)}} \sum_{\sigma \in S_n} c_\sigma \delta_\sigma(z_1, \dots, z_n) \\
    &= \sum_{(z_1, \dots, z_n) \in \aff^n} \sum_{\sigma \in S_n} c_\sigma \delta_\sigma(z_1, \dots, z_n) \\
    &= \sum_{\sigma \in S_n} c_\sigma \sum_{(z_1, \dots, z_n) \in \aff^n} \delta_\sigma(z_1, \dots, z_n) = \sum_{\sigma \in S_n} c_\sigma q^n.
    \qedhere
  \end{align*}
\end{proof}

\subsection{Von Mangoldt sums}
Recall the polynomial von Mangoldt function $\Lambda$, defined by $\Lambda(f) = \deg(g)$ for $f = g^k$ with $g$ irreducible, and $\Lambda(f) = 0$ otherwise.

\begin{cor}
  \label{cor:von-mangoldt}
  For any integers $m \geq 2$, $n \geq 4$, and $1 \leq h \leq n - 3$, in the limit as $q \to \infty$ (with the restrictions of Remark \ref{rem:char-restriction}),
  \[
  \frac{1}{q^n} \sum_{f \in M_n(\F_q)} \Bigl( \sum_{\substack{g \in \F_q[x] \\ \deg g \leq h}} \Lambda(f + g) \Bigr)^m = q^{(h + 1) m} + \bigO(q^{(h + 1) (m - 1)}),
  \]
  with the implicit constant depending on $m$, $n$, and $h$.
\end{cor}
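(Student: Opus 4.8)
The plan is to deduce this immediately from Theorem~\ref{thm:arith-bound} once $\Lambda$ is recognized as an arithmetic function of von Mangoldt type whose coefficients sum to $1$. The one substantive point is to exhibit such coefficients. I would fix an $n$-cycle $\sigma_0 = (1\,2\,\cdots\,n) \in S_n$, set $c_{\sigma_0} = 1$ and $c_\sigma = 0$ for $\sigma \neq \sigma_0$, let $\varphi_{\sigma_0}$ be the corresponding von Mangoldt type function, and claim $\varphi_{\sigma_0} = \Lambda$ on all of $M_n(\F_q)$.

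To verify this, observe that for $f \in M_n(\F_q)$,
\[
\varphi_{\sigma_0}(f) = \#\set{(z_1, \dots, z_n) \in \aff^n : f = \prod_i (x - z_i),\ z_i^q = z_{\sigma_0(i)}\ \forall i}.
\]
The constraints $z_i^q = z_{\sigma_0(i)}$ force $(z_1, \dots, z_n) = (z_1, z_1^q, \dots, z_1^{q^{n-1}})$ with $z_1 \in \F_{q^n}$, so $\varphi_{\sigma_0}(f)$ equals the number of $z \in \F_{q^n}$ with $\prod_{i=0}^{n-1}(x - z^{q^i}) = f$; this is exactly the count appearing in the proof of the prime polynomial theorem in \S\ref{section:intro}. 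If $z$ has degree $d \mid n$ over $\F_q$, then $\prod_{i=0}^{n-1}(x - z^{q^i})$ is the $(n/d)$-th power of the minimal polynomial of $z$; hence $f$ arises this way from precisely the $d = \Lambda(f)$ roots of $g$ when $f = g^k$ with $g$ irreducible, and from no $z$ at all when $f$ is not a prime power. Thus $\Lambda = \varphi_{\sigma_0}$ is of von Mangoldt type with $\sum_{\sigma \in S_n} c_\sigma = c_{\sigma_0} = 1$. Note this handles the non-squarefree locus uniformly, so no separate argument (e.g.\ via Proposition~\ref{prop:Mangoldt-factorization}) is needed.

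Finally I would apply Theorem~\ref{thm:arith-bound} with $\varphi_1 = \dots = \varphi_m = \Lambda$, so that $c_{i,\sigma} = c_\sigma$ for each $i$. Since $\prod_{i=1}^m \sum_{\deg g \leq h} \Lambda(f+g) = \bigl(\sum_{\deg g \leq h} \Lambda(f+g)\bigr)^m$, the theorem gives
\begin{align*}
\frac{1}{q^n} \sum_{f \in M_n(\F_q)} \Bigl( \sum_{\substack{g \in \F_q[x] \\ \deg g \leq h}} \Lambda(f + g) \Bigr)^m
&= \prod_{i=1}^m \Bigl( \sum_{\tau \in S_n} c_\tau\, q^{h+1} \Bigr) + \bigO(q^{(h+1)(m-1)}) \\
&= q^{(h+1)m} + \bigO(q^{(h+1)(m-1)}),
\end{align*}
which is the assertion; the hypotheses $m \geq 2$, $n \geq 4$, $1 \leq h \leq n-3$ and the characteristic restriction of Remark~\ref{rem:char-restriction} are exactly what is needed to invoke Theorem~\ref{thm:arith-bound}. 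There is essentially no obstacle here beyond the bookkeeping of the previous section; the only place that warrants care is the identity $\Lambda = \varphi_{\sigma_0}$, and as noted above the root-counting argument proves it on the full space $M_n(\F_q)$ at once.
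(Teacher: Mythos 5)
Your proposal is correct and follows essentially the same route as the paper: both identify $\Lambda$ as an arithmetic function of von Mangoldt type with coefficient $1$ on a fixed $n$-cycle and $0$ elsewhere (the paper cites the same root-counting argument used for the prime polynomial theorem, which you spell out in more detail), and then apply Theorem~\ref{thm:arith-bound} with $\varphi_1 = \dots = \varphi_m = \Lambda$ and coefficient sum $1$ to get the main term $q^{(h+1)m}$.
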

\begin{proof}
  For all $f \in M_n(\F_q)$ with roots $z_1, \dots, z_n \in \F_{q^n}$, observe that $f$ is a power of an irreducible polynomial if and only if $\Frob_q(z_1, \dots, z_n) = \sigma(z_1, \dots, z_n)$ for some $n$-cycle $\sigma \in S_n$, in which case $f$ has $\Lambda(f)$ distinct roots. In the notation of Theorem \ref{thm:arith-bound},
  \[
  \Lambda(f) = \sum_{\substack{(z_1, \dots, z_n) \in \F_{q^n}^n \\ f = \prod_i (x - z_i)}} \delta_\sigma(z_1, \dots, z_n).
  \]
  So $\Lambda$ is arithmetic of von Mangoldt type with coefficients $c_\sigma = 1$ for a fixed $n$-cycle $\sigma \in S_n$ and $c_\tau = 0$ for all other $\tau \in S_n$. The result now follows immediately from Theorem \ref{thm:arith-bound}.
\end{proof}

As a slight variant, we may instead look at $\Lambda - 1$, which is normalized to have mean zero.
\begin{cor}
  For any integers $m \geq 2$, $n \geq 4$, and $1 \leq h \leq n - 3$, in the limit as $q \to \infty$ (with the restrictions of Remark \ref{rem:char-restriction}),
  \[
  \frac{1}{q^n} \sum_{f \in M_n(\F_q)} \Bigl( \sum_{\substack{g \in \F_q[x] \\ \deg g \leq h}} [\Lambda(f + g) - 1] \Bigr)^m = \bigO(q^{(h + 1) (m - 1)}),
  \]
  with the implicit constant depending on $m$, $n$, and $h$.
\end{cor}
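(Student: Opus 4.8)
The plan is to apply Theorem \ref{thm:arith-bound} with $\varphi_1 = \dots = \varphi_m = \Lambda - 1$, so the only point is to identify the coefficients of $\Lambda - 1$ as an arithmetic function of von Mangoldt type and observe that they sum to zero, which kills the main term in Theorem \ref{thm:arith-bound} and leaves only the claimed error term.

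First I would note that $\Lambda - 1$ is of von Mangoldt type: by the proof of Corollary \ref{cor:von-mangoldt}, $\Lambda$ has coefficients $c_\sigma = 1$ for a fixed $n$-cycle $\sigma$ and $c_\tau = 0$ for every other $\tau \in S_n$; by Lemma \ref{lem:Mangoldt-type}, the constant function $1$ has coefficients all equal to $\frac{1}{n!}$, and the set of von Mangoldt type functions is an $\R$-vector space. Hence $\Lambda - 1$ has coefficients $c_\sigma = 1 - \frac{1}{n!}$ for the chosen $n$-cycle $\sigma$ and $c_\tau = -\frac{1}{n!}$ for all $\tau \neq \sigma$.

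Next I would compute
\[
\sum_{\tau \in S_n} c_\tau = \Bigl(1 - \tfrac{1}{n!}\Bigr) + (n! - 1)\Bigl(-\tfrac{1}{n!}\Bigr) = 1 - \tfrac{1}{n!} - \tfrac{n! - 1}{n!} = 0.
\]
Then Theorem \ref{thm:arith-bound}, applied with $\varphi_i = \Lambda - 1$ and coefficients $c_{i, \tau} = c_\tau$ for all $i$, gives
\[
\frac{1}{q^n} \sum_{f \in M_n(\F_q)} \Bigl( \sum_{\substack{g \in \F_q[x] \\ \deg g \leq h}} [\Lambda(f + g) - 1] \Bigr)^m = \prod_{i=1}^{m} \Bigl( \sum_{\tau \in S_n} c_\tau\, q^{h + 1} \Bigr) + \bigO(q^{(h + 1)(m - 1)}) = \bigO(q^{(h + 1)(m - 1)}),
\]
since each factor in the product vanishes.

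There is essentially no obstacle here: the content is entirely in Theorem \ref{thm:arith-bound} and in the bookkeeping of the coefficients, and the only thing to double-check is that the mean-zero normalization of $\Lambda - 1$ translates precisely into $\sum_\tau c_\tau = 0$ — which is consistent with Proposition \ref{prop:m=1} (the $m=1$ sum of $\Lambda - 1$ over $M_n(\F_q)$ is $(\sum_\tau c_\tau) q^n = 0$, matching $q^n - q^n = 0$).
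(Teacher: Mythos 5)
Your proposal is correct and is essentially identical to the paper's proof: the paper likewise observes via Lemma \ref{lem:Mangoldt-type} that $\Lambda - 1$ is of von Mangoldt type with coefficients $1 - \frac{1}{n!}$ at a fixed $n$-cycle and $-\frac{1}{n!}$ elsewhere, and then cites Theorem \ref{thm:arith-bound}. Your explicit check that the coefficients sum to zero (so the main term vanishes) is exactly the step the paper leaves implicit.
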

\begin{proof}
  Since $\Lambda$ is arithmetic of von Mangoldt type, by Lemma \ref{lem:Mangoldt-type}, $\Lambda - 1$ is as well, with coefficients $c_\sigma = 1 - \frac{1}{n!}$ for a fixed $n$-cycle $\sigma \in S_n$ and $c_\tau = -\frac{1}{n!}$ for all other $\tau \in S_n$. The result then follows from Theorem \ref{thm:arith-bound}.
\end{proof}

\begin{remark}
  Similarly, by Proposition \ref{prop:m=1}, we obtain the following well-known analogue of the prime number theorem:
  \[
  \sum_{f \in M_n(\F_q)} \Lambda(f) = q^n.
  \]
\end{remark}

\subsection{M\"obius sums}
For $f \in \F_q[x]$, define the M\"obius function $\mu$ by $\mu(f) = 0$ if $f$ is not squarefree, and $\mu(f) = (-1)^k$ if $f$ is squarefree and has $k$ distinct irreducible factors in $\F_q[x]$.

\begin{cor}
  \label{cor:mobius}
  For any integers $m \geq 2$, $n \geq 4$, and $1 \leq h \leq n - 3$, in the limit as $q \to \infty$ (with the restrictions of Remark \ref{rem:char-restriction}),
  \[
  \frac{1}{q^n} \sum_{f \in M_n(\F_q)} \Bigl( \sum_{\substack{g \in \F_q[x] \\ \deg g \leq h}} \mu(f + g) \Bigr)^m = \bigO(q^{(h + 1) (m - 1)}),
  \]
  with the implicit constant depending on $m$, $n$, and $h$.
\end{cor}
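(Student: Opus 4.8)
The plan is to show that the M\"obius function is itself an arithmetic function of von Mangoldt type and then feed it into Theorem~\ref{thm:arith-bound}, exactly as was done for $\Lambda$ in Corollary~\ref{cor:von-mangoldt}. The candidate coefficients are $c_\sigma = (-1)^n\operatorname{sgn}(\sigma)/n!$; equivalently, by Proposition~\ref{prop:Mangoldt-factorization} there is \emph{some} von Mangoldt type function $\varphi$ agreeing with $\mu$ on the squarefree locus, and the claim is that this $\varphi$ equals $\mu$. The easy half is the squarefree locus: for squarefree $f\in M_n(\F_q)$ there are exactly $n!$ tuples $(z_1,\dots,z_n)\in\aff^n$ with $f=\prod_i(x-z_i)$, each contributing the unique permutation $\sigma$ with $\Frob_q(z)=\sigma(z)$, and this $\sigma$ lies in the conjugacy class of the Frobenius permutation $\sigma_f$; since $\operatorname{sgn}$ is a class function and $\operatorname{sgn}(\sigma_f)=(-1)^{n-k}$ when $f$ has $k$ irreducible factors, one gets $\varphi(f)=(-1)^n\operatorname{sgn}(\sigma_f)=(-1)^k=\mu(f)$.

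The step I expect to carry the real content is showing that $\varphi$ vanishes on the non-squarefree locus, so that $\varphi=\mu$ everywhere. Fix a non-squarefree $f$ and a tuple $(z_1,\dots,z_n)$ with $f=\prod_i(x-z_i)$. Since $f$ has a repeated root, the stabilizer of $(z_1,\dots,z_n)$ in $S_n$ is a Young subgroup containing a transposition, and $\set{\sigma\in S_n:\Frob_q(z)=\sigma(z)}$ is a (possibly empty) coset of this stabilizer; summing $\operatorname{sgn}$ over such a coset gives $0$. Summing over all tuples yields $\varphi(f)=0=\mu(f)$. Morally this is where $\mu$ parts ways with a generic factorization function --- the cancellation is precisely the statement that $\mu$ restricts to $(-1)^n$ times the sign character on the squarefree locus, a representation that kills the nontrivial cosets above. (If one prefers, this vanishing combined with the uniqueness noted in the remark after Proposition~\ref{prop:Mangoldt-factorization} is another way to conclude $\varphi=\mu$, but that route still requires the coset computation.)

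Once $\mu$ is known to be of von Mangoldt type, the corollary is immediate: $\sum_{\sigma\in S_n}c_\sigma=\frac{(-1)^n}{n!}\sum_{\sigma\in S_n}\operatorname{sgn}(\sigma)=0$ because $n\ge 4\ge 2$, so Theorem~\ref{thm:arith-bound} applied with $\varphi_1=\dots=\varphi_m=\mu$ has vanishing main term $\prod_{i=1}^m\bigl(\sum_{\tau}c_\tau\,q^{h+1}\bigr)=0$, leaving
\[
\frac{1}{q^n}\sum_{f\in M_n(\F_q)}\Bigl(\sum_{\substack{g\in\F_q[x]\\\deg g\le h}}\mu(f+g)\Bigr)^m=\bigO\bigl(q^{(h+1)(m-1)}\bigr),
\]
with the implied constant depending on $m$, $n$, $h$. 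As a sanity check on the exponent, note that for $m=2$ Theorem~\ref{thm:arith-bound} carries no restriction on the characteristic, and the trivial bound $\lvert\sum_{\deg g\le h}\mu(f+g)\rvert\le q^{h+1}$ then promotes the $m=2$ case to all $m$ in the usual way, consistent with the displayed bound.
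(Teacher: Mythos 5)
Your proposal is correct and follows essentially the same route as the paper: you realize $\mu$ as a von Mangoldt type function with coefficients $c_\sigma = \tfrac{(-1)^n}{n!}\operatorname{sgn}(\sigma)$, with the key point being the vanishing on the non-squarefree locus (your coset-of-a-Young-subgroup sign cancellation is the paper's ``compose with a transposition swapping two equal roots'' bijection), and then apply Theorem~\ref{thm:arith-bound} with vanishing main term since $\sum_\sigma \operatorname{sgn}(\sigma)=0$.
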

\begin{proof}
  For all squarefree $f \in M_n(\F_q)$ with roots $z_1, \dots, z_n \in \F_{q^n}$, $\Frob_q$ acts on $z_1, \dots, z_n$ by a unique permutation $\sigma_f \in S_n$, and $\mu(f)$ is the number of cycles of the permutation. Let $\eps: S_n \to \set{\pm 1}$ be the sign map. Then
  \[
  \mu(f) = (-1)^n \eps(\sigma_f).
  \]
  By abuse of notation, denote $\eps(f) := \eps(\sigma_f)$ for $f$ squarefree and $\eps(f) = 0$ for $f$ not squarefree. Then
  \[
  \eps(f) = \frac{1}{n!} \sum_{\substack{(z_1, \dots, z_n) \in \aff^n \\ f = \prod_i (x - z_i)}} \sum_{\sigma \in S_n} \eps(\sigma) \delta_\sigma(z_1, \dots, z_n).
  \]
  Indeed, if $f$ is not squarefree, then composing with a transposition that permutes two equal roots of $f$ gives a bijection between even permutations for which $\delta_\sigma(z_1, \dots, z_n) = 1$ and odd permutations for which $\delta_\sigma(z_1, \dots, z_n) = 1$. If $f$ is squarefree, then for each $(z_1, \dots, z_n) \in \aff^n$, $\delta_\sigma(z_1, \dots, z_n) = 1$ for exactly one $\sigma$, and $\eps(f) = \eps(\sigma)$.

  Hence, the result follows from Theorem \ref{thm:arith-bound} with $c_\sigma = \frac{1}{n!} (-1)^n \eps(\sigma)$ for $\sigma \in S_n$.
\end{proof}

\begin{remark}
  Similarly, by Proposition \ref{prop:m=1}, we obtain the well-known result that
  \[
  \sum_{f \in M_n(\F_q)} \mu(f) = \sum_{\sigma \in S^n} c_\sigma q^n = \sum_{\sigma \in S_n} \frac{(-1)^n \eps(\sigma)}{n!} q^n = \frac{(-1)^n}{n!} q^n \sum_{\sigma \in S_n} \eps(\sigma) = 0.
  \]
\end{remark}

\section{Variance computations and cohomology}
\label{section:variance-cohom}
Our results on cohomology do not follow \emph{solely} from bounds on moments of arithmetic functions, but such bounds can be combined with our results to deduce more information about cohomology. We would be interested to know of a purely geometric way to deduce this information.

The main analytic input is Theorem \ref{thm:rodgers} \cite[Thm.\ 3.1]{Rodgers} and the version for covariance \cite[Thm.\ 10.1]{Rodgers}. The cohomology group we study in this section is
\[
\mathcal{H} := \Gr_W^{2n - 2} H^{2n - 2}(X_{2, n, h}, \Q_\ell),
\]
which is the highest weight graded piece of the highest degree of cohomology not computed by Corollary \ref{cor:cohom}, and which controls the main term of the variance of arithmetic functions in short intervals (the implied constant on the $\bigO(q^{h + 1})$ term in Theorem \ref{thm:arith-bound}).

Given $\sigma \in S_n$, let 
\[
M_\sigma(\F_q) := \{f \in M_n(\F_q): f \text{ is squarefree of the same cycle type as } \sigma\},
\]
and let $S_\sigma \subset S_n$ be the conjugacy class of $\sigma$. Consider the factorization function $\varphi_\sigma$ as in the proof of Proposition \ref{prop:Mangoldt-factorization}. Recall that for squarefree $f \in M_n(\F_q)$,
\[
\varphi_\sigma(f) = \begin{cases}
  d_\sigma & \text{if $f$ has cycle type $\sigma$}, \\
  0 & \text{otherwise},
\end{cases}
\]
where $d_\sigma = \#S_n/\#S_\sigma$. Decompose $\varphi_\sigma$ as in \cite[2.2]{Rodgers}:
\[
\varphi_\sigma(f) = \sum_{\lambda \vdash n} \hat{\varphi}_\lambda X^\lambda(f) + b(f) = \sum_{\lambda \vdash n} X^\lambda(\sigma) X^\lambda(f) + b(f),
\]
where $b$ vanishes on the squarefree locus. Indeed,
\begin{align*}
  \hat{\varphi}_\lambda &= \lim_{q \to \infty} \frac{1}{q^n} \sum_{f \in M_n(\F_q)} \varphi_\sigma(f) X^\lambda(f) = \lim_{q \to \infty} \frac{\# M_\sigma(\F_q)}{q^n} \cdot \frac{\#S_n}{\#S_\sigma} X^\lambda(\sigma) \\
  &= \frac{\# S_\sigma}{\#S_n} \cdot \frac{\#S_n}{\#S_\sigma} X^\lambda(\sigma) = X^\lambda(\sigma).
\end{align*}

By \cite[Thm.\ 10.1]{Rodgers}, given $\sigma_1, \sigma_2 \in S_n$, the covariance of $\varphi_{\sigma_1}$ and $\varphi_{\sigma_2}$ in short intervals is given by
\[
\Covar_{f \in M_n(\F_q)}\Bigl( \sum_{g \sim_h f} \varphi_{\sigma_1}(g), \sum_{g \sim_h f} \varphi_{\sigma_2}(g) \Bigr) = q^{h + 1} \sum_{\substack{\lambda \vdash n \\ \lambda_1 \leq n - h - 2}} X^\lambda(\sigma_1) X^\lambda(\sigma_2) + \bigO(q^{h + 1/2}).
\]
On the other hand, using Proposition \ref{prop:m=1}, for $i = 1, 2$,
\[
\frac{1}{q^n} \sum_{f \in M_n(\F_q)} \sum_{g \sim_h f} \varphi_{\sigma_i}(g) = \frac{q^{h + 1}}{q^n} \sum_{f \in M_n(\F_q)} \varphi_{\sigma_i} = q^{h + 1},
\]
so
\begin{align*}
  &\Covar_{f \in M_n(\F_q)} \Bigl( \sum_{g \sim_h f} \varphi_{\sigma_1}(g), \sum_{g \sim_h f} \varphi_{\sigma_2}(g) \Bigr) \\
  &= \frac{1}{q^n} \sum_{f \in M_n(\F_q)} \Bigl( \sum_{g \sim_h f} \varphi_{\sigma_1}(g) - q^{h + 1} \Bigr) \Bigl( \sum_{g \sim_h f} \varphi_{\sigma_2}(g) - q^{h + 1} \Bigr) \\
  &= \frac{q^{h + 1}}{q^n} \sum_{\substack{f_1, f_2 \in M_n(\F_q) \\ f_1 \sim_h f_2}} \varphi_{\sigma_1}(f_1) \varphi_{\sigma_2}(f_2) - q^{2h + 2}.
\end{align*}
Putting these together,
\[
\sum_{\substack{f_1, f_2 \in M_n(\F_q) \\ f_1 \sim f_2}} \varphi_{\sigma_1}(f_1) \varphi_{\sigma_2}(f_2) = q^{n + h + 1} + q^n \sum_{\substack{\lambda \vdash n \\ \lambda_1 \leq n - h - 2}} X^\lambda(\sigma_1) X^\lambda(\sigma_2) + \bigO(q^{n - 1/2}).
\]
By the same argument as in the proof of Theorem \ref{thm:arith-bound},
\begin{align*}
  \sum_{\substack{f_1, f_2 \in M_n(\F_q) \\ f_1 \sim f_2}} \varphi_{\sigma_1}(f_1) \varphi_{\sigma_2}(f_2) &= \sum_{(z_{i, j}) \in Y_{2, n, h}} \delta_{\sigma_1}(z_{1, 1}, \dots, z_{1, n}) \delta_{\sigma_2}(z_{2, 1}, \dots, z_{2, n}) \\
  &= \{(z_{i, j}) \in Y_{2, n, h}: \Frob_q(z_{i, j}) = z_{i, \sigma_i(j)} \, \forall i, j\}.
\end{align*}
So by the Grothendieck--Lefschetz trace formula,
\begin{multline*}
  \frac{1}{q - 1} \sum_{r} (-1)^r \Tr(\Frob_q \circ (\sigma_1^{-1}, \sigma_2^{-1}): H^r(X_{2, n, h}, \Q_\ell)) \\
  = q^{n + h + 1} + q^n \sum_{\substack{\lambda \vdash n \\ \lambda_1 \leq n - h - 2}} X^\lambda(\sigma_1) X^\lambda(\sigma_2) + \bigO(q^{n - 1/2}).
\end{multline*}
By Corollary \ref{cor:cohom} and Proposition \ref{prop:sym-action}, the $q^{n + h + 1}$ term comes from $H^r(X_{2, n, h}, \Q_\ell)$ for $r \geq 2n - 1$, so
\[
\Tr(\Frob_q \circ (\sigma_1^{-1}, \sigma_2^{-1}): \mathcal{H}) = q^{n - 1} \sum_{\substack{\lambda \vdash n \\ \lambda_1 \leq n - h - 2}} X^\lambda(\sigma_1) X^\lambda(\sigma_2).
\]
(The error term vanishes because by construction, $\mathcal{H}$ is pure of weight $2n - 2$.) In particular, taking $\sigma_1 = \sigma_2 = \id$,
\[
\Tr(\Frob_q: \mathcal{H}) = q^{n - 1} \sum_{\substack{\lambda \vdash n \\ \lambda_1 \leq n - h - 2}} \dim(V_\lambda)^2,
\]
where $V_\lambda$ is the irreducible representation of $S_n$ corresponding to $\lambda$.

If $t_1 q^{n - 1}, \dots, t_r q^{n - 1}$ are the eigenvalues of $\Frob_q$, then $t_1, \dots, t_r$ are roots of unity. Choosing $N$ so that $t_i^N = 1$ for all $i = 1, \dots, r$, the eigenvalues of $\Frob_{q^N}$ on $\mathcal{H}$ are $(t_i q^{n - 1})^N = (q^N)^{n - 1}$. Furthermore,
\[
\Tr(\Frob_{q^N}: \mathcal{H}) = (q^N)^{n - 1} \sum_{\substack{\lambda \vdash n \\ \lambda_1 \leq n - h - 2}} \dim(V_\lambda)^2,
\]
so
\[
\dim \mathcal{H} = \sum_{\substack{\lambda \vdash n \\ \lambda_1 \leq n - h - 2}} \dim(V_\lambda)^2.
\]
But this means $t_1, \dots, t_r$ must all be equal to $1$ in the first place, so $\Frob_q$ acts by scalar multiplication by $q^{n - 1}$.

Hence,
\[
\Tr((\sigma_1^{-1}, \sigma_2^{-1}): \mathcal{H}) = \sum_{\substack{\lambda \vdash n \\ \lambda_1 \leq n - h - 2}} X^\lambda(\sigma_1) X^\lambda(\sigma_2),
\]
so we can recover the action of $S_n \times S_n$ on $\mathcal{H}$. In summary, we have proved:
\begin{thm}
  Fix integers $n$ and $h$ with $1 \leq h \leq n - 5$. Let
  \[
  \mathcal{H} := \Gr_W^{2n - 2} H^{2n - 2}(X_{2, n, h}, \Q_\ell).
  \]
  Then $\Frob_q$ acts on $\mathcal{H}$ by scalar multiplication by $q^{n - 1}$, and the action of $S_n \times S_n$ on $X_{2, n, h}$ induces an action on $\mathcal{H}$ such that
  \[
  \mathcal{H} \cong \bigoplus_{\substack{\lambda \vdash n \\ \lambda_1 \leq n - h - 2}} V_\lambda \boxtimes V_\lambda
  \]
  as an $S_n \times S_n$-representation.
\end{thm}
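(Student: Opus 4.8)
The plan is to reproduce, in slightly reorganized form, the derivation leading up to this statement: combine Rodgers' variance and covariance theorems with the Grothendieck--Lefschetz trace formula for $X_{2,n,h}$, then use the cohomology computed in Corollary \ref{cor:cohom} and the triviality of the symmetric group action of Proposition \ref{prop:sym-action} to strip away everything except $\mathcal{H}$. For each $\sigma \in S_n$ I would take the arithmetic function of von Mangoldt type $\varphi_\sigma$ built in the proof of Proposition \ref{prop:Mangoldt-factorization} (coefficients $c_\sigma = 1$ and $c_\tau = 0$ for $\tau \ne \sigma$), which on the squarefree locus equals the constant $d_\sigma = \#S_n / \#S_\sigma$ on polynomials of cycle type $\sigma$ and vanishes elsewhere. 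Using Proposition \ref{prop:m=1} to count squarefree polynomials of a given cycle type, one computes the coefficients in Rodgers' decomposition to be $\hat\varphi_\lambda = X^\lambda(\sigma)$, and one reads off the mean $\frac{1}{q^n}\sum_{f}\sum_{g \sim_h f}\varphi_\sigma(g) = q^{h+1}$ from the same proposition.

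Feeding $\varphi_{\sigma_1}$ and $\varphi_{\sigma_2}$ into the covariance version of Theorem \ref{thm:rodgers} (\cite[Thm.\ 10.1]{Rodgers}) and expanding the covariance via these means turns the statement into
\[
\sum_{\substack{f_1, f_2 \in M_n(\F_q) \\ f_1 \sim f_2}} \varphi_{\sigma_1}(f_1)\,\varphi_{\sigma_2}(f_2) = q^{n+h+1} + q^n \sum_{\substack{\lambda \vdash n \\ \lambda_1 \le n-h-2}} X^\lambda(\sigma_1)\,X^\lambda(\sigma_2) + \bigO(q^{n-1/2}).
\]
As in the proof of Theorem \ref{thm:arith-bound}, the left-hand side is the twisted point count $\#\set{(z_{i,j}) \in Y_{2,n,h} : \Frob_q(z_{i,j}) = z_{i,\sigma_i(j)}}$ on the affine cone, and passing to $X_{2,n,h}$ and applying the Grothendieck--Lefschetz trace formula reexpresses it through $\sum_r (-1)^r \Tr\!\bigl(\Frob_q \circ (\sigma_1^{-1},\sigma_2^{-1}) \mid H^r(X_{2,n,h},\Q_\ell)\bigr)$. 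By Corollary \ref{cor:cohom} the cohomology in degrees $r \ge 2n-1$ is Tate of weight $r$, and by Proposition \ref{prop:sym-action} the $S_n \times S_n$-action on it is trivial, so this part of the Lefschetz sum accounts for the $q^{n+h+1}$ term. By Deligne's purity every $H^r$ with $r \le 2n-2$ has weights $\le 2n-2$, with weight exactly $2n-2$ possible only when $r = 2n-2$, so comparing the weight-$(2n-2)$ parts of the two sides isolates $\mathcal{H} = \Gr_W^{2n-2}H^{2n-2}(X_{2,n,h},\Q_\ell)$ and yields
\[
\Tr\!\bigl(\Frob_q \circ (\sigma_1^{-1},\sigma_2^{-1}) \mid \mathcal{H}\bigr) = q^{n-1} \sum_{\substack{\lambda \vdash n \\ \lambda_1 \le n-h-2}} X^\lambda(\sigma_1)\,X^\lambda(\sigma_2),
\]
the $\bigO(q^{n-1/2})$ error being invisible since $\mathcal{H}$ is pure of weight $2n-2$.

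Taking $\sigma_1 = \sigma_2 = \id$ gives $\Tr(\Frob_q \mid \mathcal{H}) = q^{n-1}D$ with $D := \sum_{\lambda \vdash n,\ \lambda_1 \le n-h-2}\dim(V_\lambda)^2$, and performing the same computation over $\F_{q^k}$ shows $q^{-k(n-1)}\Tr(\Frob_{q^k} \mid \mathcal{H}) \to D$ as $k \to \infty$. On the other hand, since $\mathcal{H}$ is pure of weight $2n-2$ its Frobenius eigenvalues are $q^{n-1}u_i$ with $\lvert u_i\rvert = 1$ and $q^{-k(n-1)}\Tr(\Frob_{q^k} \mid \mathcal{H}) = \sum_i u_i^k$ exactly; a sum of this form converges as $k \to \infty$ only if every $u_i = 1$, in which case its value is $\dim\mathcal{H}$, forcing $\dim\mathcal{H} = D$ and $\Frob_q$ to act on $\mathcal{H}$ by the scalar $q^{n-1}$. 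As this scalar commutes with the $S_n \times S_n$-action (which is defined over the prime field), dividing it out of the displayed trace identity gives $\Tr\!\bigl((\sigma_1^{-1},\sigma_2^{-1}) \mid \mathcal{H}\bigr) = \sum_{\lambda_1 \le n-h-2} X^\lambda(\sigma_1)X^\lambda(\sigma_2)$ for all $(\sigma_1,\sigma_2)$. Since $S_n$-characters are real-valued this is precisely the character of $\bigoplus_{\lambda_1 \le n-h-2} V_\lambda \boxtimes V_\lambda$, and because a semisimple representation in characteristic zero is determined by its character (and the $V_\lambda$ are defined over $\Q$), I conclude $\mathcal{H} \cong \bigoplus_{\lambda_1 \le n-h-2} V_\lambda \boxtimes V_\lambda$ as $S_n \times S_n$-representations.

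The main obstacle will be the weight bookkeeping that pins down $\mathcal{H}$ exactly: I have to verify that once the Tate cohomology in degrees $\ge 2n-1$ is removed (accounting for $q^{n+h+1}$) and all cohomology of weight $< 2n-2$ is absorbed into the $\bigO(q^{n-1/2})$ error, the remaining contribution matched against $q^{n-1}\sum X^\lambda(\sigma_1)X^\lambda(\sigma_2)$ really is $\Gr_W^{2n-2}H^{2n-2}$; this leans essentially on both the explicit description of $H^{\ge 2n-1}$ from Corollary \ref{cor:cohom} and Deligne's weight estimates. A secondary, more routine, point is the normalization step, where one needs the convergence statement $q^{-k(n-1)}\Tr(\Frob_{q^k}\mid\mathcal{H}) \to D$ (obtained by re-running the variance computation over each finite extension) together with the purity of $\mathcal{H}$ in order to rule out Frobenius eigenvalues with archimedean absolute value $q^{n-1}$ that are not simply $q^{n-1}$.
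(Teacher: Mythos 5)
Your proposal is correct and follows essentially the same route as the paper: the functions $\varphi_\sigma$ with $\hat\varphi_\lambda = X^\lambda(\sigma)$, Rodgers' covariance theorem, the Grothendieck--Lefschetz trace formula on $Y_{2,n,h}$, Corollary \ref{cor:cohom} and Proposition \ref{prop:sym-action} to account for the $q^{n+h+1}$ term, purity to isolate $\mathcal{H}$, and character theory to identify the $S_n \times S_n$-representation. The only difference is cosmetic: where the paper notes the normalized eigenvalues are roots of unity and evaluates $\Tr(\Frob_{q^N})$ for one suitable $N$, you run the computation over all extensions $\F_{q^k}$ and use a convergence argument, which amounts to the same normalization step.
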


This computation raises two natural questions for further research:
\begin{enumerate}[(1)]
  \item Is there a way to compute $\mathcal{H}$ (including the Frobenius and $S_n \times S_n$ actions) more directly from the geometry of $X_{2, n, h}$, rather than deducing it from analytic estimates?
  \item What should the analogous statement be for the highest unknown degree of cohomology of $X_{m, n, h}$ for $m \geq 3$?
\end{enumerate}

\begin{acknowledgements*}
  The authors thank Jordan Ellenberg for advice throughout this research. We also thank Andrei C\u{a}ld\u{a}raru, Evan Dummit, Daniel Erman, Laurentiu Maxim, and Melanie Matchett Wood for several productive discussions. We are especially grateful to Lior Bary-Soroker and Zeev Rudnick for their helpful feedback. This material is based upon work supported by the National Science Foundation under Grant No.\ DMS-1402620 and NSF RTG award DMS-1502553.
\end{acknowledgements*}

\end{document}